\documentclass[11pt]{article}
\usepackage{amsmath,amsthm,amssymb}
\usepackage[mathscr]{euscript}
\usepackage{verbatim}
\usepackage{dsfont,bm}
\usepackage{stmaryrd} 
\usepackage{color}
\usepackage{mathtools}
\usepackage{graphicx}
\usepackage{enumerate}
\usepackage{esint} 
\usepackage{mathabx}
\allowdisplaybreaks[4]

\usepackage{xcolor}
\definecolor{darkgreen}{rgb}{0,0.75,0}
\definecolor{darkred}{rgb}{0.75,0,0}
\definecolor{darkmagenta}{rgb}{0.5,0,0.5}
\usepackage[colorlinks,citecolor=darkgreen,linkcolor=darkred,urlcolor=darkmagenta]{hyperref}
\newtheorem{theorem}{Theorem}[section]
\newtheorem{lem}[theorem]{Lemma}
\newtheorem{lemma}[theorem]{Lemma}
\newtheorem{prop}[theorem]{Proposition}

\theoremstyle{definition}
\newtheorem{definition}[theorem]{Definition}

\newtheorem{remark}[theorem]{Remark}

\numberwithin{equation}{section}

\def\be{\begin{equation}}
	\def\ee{\end{equation}}
\def\bes{\begin{equation*}}
	\def\ees{\end{equation*}}

\newcommand{\arxiv}[1]{{\tt \href{http://arxiv.org/abs/#1}{arXiv:#1}}}
\newcommand{\set}[1]{\left\{ #1 \right\}}

\newcommand{\abs}[1]{{\left\lvert #1\right\rvert}}
\newcommand\norm[1]{\left\lVert #1\right\rVert} 
\newcommand{\one}{\mathds{1}} 

\newcommand{\diag}[0]{\operatorname{diag}}

\newcommand{\Capa}{\operatorname{Cap}}

\def\ol{\overline}           

\def\eps{\varepsilon}

\def\to {\rightarrow}

\def\q{\quad} 
\def\dint{\int\kern-.6em\int}

\newcommand\restr[2]{{
		\left.\kern-\nulldelimiterspace 
		#1 
		\vphantom{\big|} 
		\right|_{#2} 
}} 

\def\diam{{\mathop{{\rm diam }}}}
\def\dist{{\mathop {{\rm dist}}}}

\newcommand{\on}[1]{\operatorname{ #1}}

\def\wt{\widetilde}
\def\wh{\widehat}

\def\be{\begin{equation}}
	\def\ee{\end{equation}}
\def\bes{\begin{equation*}}
	\def\ees{\end{equation*}}
\def\ba{\begin{align}}
	\def\ea{\end{align}}
\def\xxea{\end{align}}
\def\bas{\begin{align*}}
\def\eas{\end{align*}}

\definecolor{dgreen}{rgb}{0, 0.6, 0.1}
\definecolor{dblue}{rgb}{0, 0.0, 0.6}
\definecolor{vdblue}{rgb}{0,.08, 0.45}
\definecolor{dred}{rgb}{0.7, 0.0, 0.0}
\definecolor{vdblue}{rgb}{0,.08, 0.45}

\definecolor{purple}{rgb}{0.6, 0.0, 0.6}
\definecolor{mytext}{rgb}{0.1, 0.1, 0.1}

\def\Cap{\operatorname{Cap}} 

\begin{document}
	
	\font\titlefont=cmbx14 scaled\magstep1
	\title{\titlefont A simplified characterization of stable-like heat kernel estimates} 
	\author{Mathav Murugan} 
	\renewcommand{\thefootnote}{}
	\footnotetext{The author is partially supported by NSERC and the Canada research chairs program.}
	\renewcommand{\thefootnote}{\arabic{footnote}}
	\setcounter{footnote}{0}
 
	\maketitle
	\vspace{-0.5cm}
\begin{abstract}
We study   heat kernel estimates for symmetric pure jump processes on general metric measure spaces.
Building on recent progress in the local setting due to S.~Eriksson-Bique, we develop a non-local version
of the Whitney blending technique and use it to relate stable-like heat kernel estimates to   capacity upper bounds.
Under two-sided stable-like bounds on the jump kernel, we show that a capacity upper bound across annuli
implies a cutoff Sobolev inequality, and we obtain a characterization of stable-like heat kernel estimates
in terms of these conditions.
As a consequence, we give an affirmative answer to a conjecture of A.~Grigor'yan, E.~Hu, and J.~Hu.
\end{abstract}

\section{Introduction}

The behavior of the heat kernel is intimately tied to the geometry of the underlying space.
A landmark result in this direction, independently due to Grigor'yan and Saloff-Coste, shows that Gaussian heat kernel estimates and the parabolic Harnack inequality for Brownian motion on a Riemannian manifold are characterized by two analytic–geometric properties: volume doubling and the Poincaré inequality \cite{Gri,Sal}.
Over the past three decades, this characterization has been extended to a variety of settings, including diffusions on local Dirichlet spaces \cite{Stu}, random walks on graphs and metric spaces \cite{Del,MS23}, diffusions on fractal-like   spaces \cite{BB,BBK,GHL},   jump processes \cite{CKW-hke, CKW-phi, GHH18,MS}, symmetric Markov processes with both diffusion and jumps \cite{CKW-diff-jump}. 
Similar characterizations of the elliptic Harnack inequality (EHI) for diffusions have also been obtained \cite{BM, BCM}. An important application of these characterizations is the stability of heat kernel estimates and related analytic properties such as elliptic and parabolic Harnack inequalities under perturbations of the space or the underlying Dirichlet form and also to establish such properties in examples of interest.
  
On fractals and many fractal-like spaces, beyond volume doubling
and the Poincar\'e inequality, the \textbf{cutoff Sobolev inequality} and its variants constitute a fundamental analytic tool.
Introduced by Barlow and Bass in the setting of random walks on graphs and subsequently extended to other settings,
this inequality plays a central role in the characterization and stability of heat kernel estimates and Harnack inequalities, such as:
\begin{enumerate}[(i)]
	\item Parabolic Harnack inequality and two-sided sub-Gaussian heat kernel
	estimates for diffusions  \cite[Theorem 1.5]{BB}, \cite[Theorem 2.16]{BBK} \cite[Theorem 1.2]{GHL}.
	\item Elliptic Harnack inequality for diffusions \cite[Theorem 5.15]{BM},\cite[Theorem 7.9]{BCM}.
	\item Sub-Gaussian heat kernel upper bounds for diffusions \cite[Theorem 1.12]{AB}.
	\item Two-sided stable-like heat kernel bounds for pure jump processes \cite[Theorem 1.13]{CKW-hke}, \cite[Theorem 1.12]{GHH18}, \cite[Theorem 1.5]{MS}.
	\item Stable-like heat kernel upper bounds for pure jump processes \cite[Theorem 1.15]{CKW-hke},
	and parabolic Harnack inequality for pure jump processes \cite[Theorem 1.20]{CKW-phi}.
	\item Heat kernel bounds (two-sided and upper) and parabolic Harnack inequality
	for processes with both diffusions and jumps \cite[Theorems 1.13, 1.14 and 1.18]{CKW-diff-jump}.
\end{enumerate}
 Although the cutoff Sobolev inequality and their variants are a powerful tool for establishing the stability results mentioned above, it is often difficult to verify in concrete examples. It is therefore natural to seek simpler sufficient conditions, such as an upper bound on capacity across annuli. This motivation underlies a conjecture of Grigor'yan, Hu, and Lau in setting~(i) above  \cite[Conjecture 4.15]{GHL14}, \cite[p.~1493, p.~1498]{GHL}, as well as a related conjecture of Grigor'yan, E.~Hu, and J.~Hu in setting~(iv) \cite[Conjecture~1.14]{GHH18}.
  Let us also mention that cutoff Sobolev inequality has proven to be more generally useful beyond   stability results such as obtaining   singularity of energy measures \cite{KM-aop,Yan} and also for obtaining heat kernel estimates for reflected diffusions \cite{Mur24} and diffusions on  Laakso-type spaces \cite{Mur26}.
 
Recently, the conjecture of Grigor'yan, Hu, and Lau was resolved in setting~(i) (and hence also in setting~(ii))
by S.~Eriksson-Bique in a breakthrough work introducing the technique of \emph{Whitney blending} \cite{Eri}.
In fact, the Whitney blending technique in \cite{Eri} is implemented in the more general framework of
$p$-Dirichlet spaces introduced in \cite[\textsection 2.2]{EM}, which allows for non-linear energies but assumes locality of the energy.
The purpose of this work is to develop the Whitney blending technique in a non-local setting and to give
an affirmative answer to \cite[Conjecture~1.14]{GHH18} in setting~(iv).
More generally, one may optimistically conjecture that the cutoff Sobolev inequality and its variants
can be replaced by simpler capacity upper bounds in all of the characterizations discussed above.
We leave the resolution of analogous conjectures in the remaining settings, such as~(iii),~(v), and~(vi),
as an interesting direction for future research.
We refer to \cite{Ant,Mur23,Yan} for recent partial progress on the Grigor'yan--Hu--Lau conjecture
prior to its resolution in \cite{Eri}.

While Whitney covers have previously been used to obtain heat kernel bounds for diffusion processes, most notably in the analysis of reflected and killed diffusions in \cite{GS}, their use in deriving heat kernel bounds for jump processes is new. This constitutes a key methodological novelty of the present work. Moreover, the Whitney blending approach in the nonlocal setting requires substantially new ingredients not present in \cite{Eri}, including sharp two sided estimates on the jump kernel and the use of truncated Dirichlet forms.

Our main results are as follows:
\begin{enumerate}[(A)]
	\item Two-sided stable-like bounds on the jump kernel together with a capacity upper bound
	imply a cutoff Sobolev inequality (Theorem~\ref{t:main}).
	\item A characterization of stable-like heat kernel estimates in terms of the two-sided bounds
	on the jump kernel and the capacity upper bound above (Theorem~\ref{t:char-hke}).
\end{enumerate}
The result (B) provides an affirmative answer to \cite[Conjecture~1.14]{GHH18}.

\subsection{Framework and results}
Let $(X,d)$ be a complete, locally compact, separable metric space, and let $m$ be a positive Radon measure on $M$ with full support. We always assume that $X$ contains at least two points. Such a triple $(X,d,m)$ is called a \emph{metric measure space}.
We set $B(x,r):=\{y\in X\mid d(x,y)<r\}, \ol{B}(x,r):=\{y \in X \mid d(x,y) \le r\}$, and $V(x,r)= m(B(x,r))$ for $x \in X, r \in (0,\infty)$. We assume that 
the measure $m$ satisfies the following \emph{volume doubling} property \eqref{VD}: there exists $C_D >1$ such that
\begin{equation}\label{VD} \tag{VD}
	V(x,2r)\le C_{D} V(x,r), \q \mbox{for any $x \in X, r \in (0,\infty)$}.
\end{equation}

We consider a \emph{symmetric Dirichlet form} $(\mathcal{E},
\mathcal{F})$ on $L^2(X,m)$. In other words, $\mathcal{F}$ is a dense linear subspace of $L^2(X,m)$, $\mathcal{E}:\mathcal{F} \times \mathcal{F} \to \mathbb{R}$ is symmetric, non-negative definite, bilinear form that is \emph{closed} ($\mathcal{F}$ is a Hilbert space under the inner product $\mathcal{E}_1(\cdot,\cdot)= \mathcal{E}(\cdot,\cdot) + \langle \cdot,\cdot \rangle_{L^2(X,m)}$) and \emph{Markovian} (for any $f \in \mathcal{F}$, we have $\wh f:=(0\vee f)\wedge 1 \in \mathcal{F}$ and $\mathcal{E}(\wh f,\wh f) \le \mathcal{E}(f,f)$). We assume that $(\mathcal{E},\mathcal{F})$ is \emph{regular}; that is, $\mathcal{F}\cap\mathcal{C}_{\mathrm{c}}(X)$ is dense both in $(\mathcal{F},\mathcal{E}_{1})$
and in $(\mathcal{C}_{\mathrm{c}}(X),\|\cdot\|_{\mathrm{sup}})$.  We assume that $(\mathcal{E},\mathcal{F})$ is a \emph{pure jump} type Dirichlet form; that is, there exists a symmetric positive Radon measure on $M \times M \setminus \diag$ such that 
\[
\mathcal{E}(f,f) = \int_{M \times M \setminus \diag} (f(x)-f(y))^2\, J(dx,dy), \q \mbox{for all $f \in \mathcal{F}$,}
\]
where $\diag = \set{(x,x) \mid x \in X}$ denotes the diagonal. The Radon measure $J$ is called the \emph{jump  measure}; cf.~\cite[Theorem 3.2.1]{FOT}.

We assume that 
the measure $m$ satisfies the following \emph{volume doubling} property \hypertarget{vd}{$\operatorname{VD}$}: there exists $C_D >1$ such that
\begin{equation*}  \tag{VD}
	V(x,2r)\le C_{D} V(x,r), \q \mbox{for any $x \in X, r \in (0,\infty)$}.
\end{equation*}
Setting $\alpha =\log_2 C_D$ and iterating  \hyperlink{vd}{$\operatorname{VD}$}, we obtain the following well-known consequence 
	\begin{equation}\label{e:vd-iter}
	V(x,s)\le C_D^2 \left( \frac{d(x,y)+s}{r}\right)^\alpha V(y,r), \quad \mbox{for all $x \in X, 0<r<s<\infty$.}
\end{equation}
We recall the closely related reverse volume doubling property and a generalized version introduced in \cite[Definition 1.4]{Mal}.
\begin{definition}
	We say that a metric measure space $(X,d,m)$ satisfies the \emph{reverse volume doubling} property \hypertarget{rvd}{$\operatorname{RVD}$} if there exists $\beta \in (0,\infty), C \in (1,\infty)$ such that for all $x \in X, 0 < r \le R < \diam(X,d)$, we have 
\begin{equation*}  \tag{RVD}
	\frac{m(B(x,R))}{m(B(x,r)))}\ge C^{-1} \frac{R^\beta}{r^\beta}.
\end{equation*}
For any $x \in X$, define $D_x:= \inf_{y \in X \setminus \{x\}} d(x,y)$. We say that $(X,d,m)$ satisfies the \emph{quasi reverse volume doubling} property \hypertarget{qrvd}{$\operatorname{QRVD}$} if there exists $\beta \in (0,\infty), C \in (1,\infty)$ such that for all $x \in X, D_x < r \le R < \diam(X,d)$, we have 
\begin{equation*}  \tag{QRVD}
	\frac{m(B(x,R))}{m(B(x,r)))}\ge C^{-1} \frac{R^\beta}{r^\beta}.
\end{equation*}
\end{definition}
Much of the existing work on stable-like heat kernel estimates assumes the reverse volume doubling condition \hyperlink{rvd}{$\operatorname{RVD}$}.
Since many natural examples, such as graphs with polynomial volume growth, do not satisfy \hyperlink{rvd}{$\operatorname{RVD}$} but do satisfy \hyperlink{qrvd}{$\operatorname{QRVD}$},
we work throughout in this more general setting.

We say that    $\phi:[0,\infty) \to [0,\infty)$ is a \emph{scale function} if $\phi$ is a homeomorphism such that there exist constants $C_\phi \ge 1$, $\beta_2 \ge \beta_1 >0$ such that 
\begin{equation}  \label{e:scale}
C_\phi^{-1} \left(\frac R r\right)^{\beta_1} \le \frac{\phi(R)}{\phi(r)} \le C_\phi \left(\frac R r\right)^{\beta_2} \quad \mbox{for all $0 < r \le R$.}
\end{equation}

\begin{definition} Let  $(X,d,m)$ be a metric measure space and let $(\mathcal{E},\mathcal{F})$ be a pure jump Dirichlet form on $L^2(X,m)$ with jump measure $J$. 
	 We say that the two-sided jump kernel bounds
	\hypertarget{jphi}{$\operatorname{J(\phi)}$} hold if there exist a symmetric
	function $J : X \times X \to [0,\infty)$ and constants $c, C > 0$ such that
	\[
	J(dx,dy) = J(x,y)\, m(dx)\, m(dy),
	\]
	and
	\begin{equation*}
		\tag*{$\operatorname{J(\phi)}$}
		\frac{c}{m(B(x,d(x,y)))\, \phi(d(x,y))}
		\le J(x,y)
		\le \frac{C}{m(B(x,d(x,y)))\, \phi(d(x,y))},
	\end{equation*}
	for all $x,y \in X$ with $x \neq y$.
	The above upper and lower bounds on $J$ will be denoted by
$\operatorname{J(\phi)}_{\le}$ and $\operatorname{J(\phi)}_{\ge}$,
respectively.

\end{definition}

For the remainder of this work, we always assume that the jump measure $J$ on $X \times X \setminus \diag_X$ is of the form $J(dx,dy)= J(x,y) \,m(dx)\,m(dy)$ where $J: X \times X \to [0,\infty)$ is a non-negative, symmetric, measurable function.  We define the \emph{energy measure} $\Gamma(f,f)$ of a function $f \in \mathcal{F}$ by 
\begin{equation} \label{e:def-gamma} 
	\Gamma(f,f)(A) = \int_{A} \int_X (f(x)-f(y))^2 J(x,y)\,m(dy)\,m(dx).
\end{equation}
We also denote by $\Gamma(f,f): X \to \mathbb{R}$, the dennsity of the measure $\Gamma(f,f)$ with respect to $m$ given by 
\[
\Gamma(f,f)(x):= \int_{X}(f(x)-f(y))^2 J(x,y) \, m(dy).
\]
By an abuse of notation, we use the same symbol for both the measure and its
density with respect to $m$ for $J$ and $\Gamma$; the meaning will be clear
from the context.
\begin{definition}
\begin{enumerate}[(i)]
	\item
	Let $U \subset V$ be open sets in $X$ with $U \subset \overline{U} \subset V$.
	A \textbf{cutoff function} for $U \subset V$ is a non-negative, bounded,
	measurable function $\phi$ such that $\phi \equiv 1$ on $U$,
	$\phi \equiv 0$ on $V^c$, and $0 \le \phi \le 1$ on $X$.
	
	\item
	We say that the \textbf{cutoff Sobolev inequality}
	\hypertarget{csj}{$\operatorname{CSJ(\phi)}$} holds if there exist constants
	$a_0 \in (0,1]$ and $C_1, C_2 \in (0,\infty)$ such that for all
	$0 < r \le R$, $x \in X$, and all $f \in \mathcal{F}$, there exists a cutoff
	function $\psi \in \mathcal{F}$ for $B(x,R) \subset B(x,R+r)$ such that
	\begin{equation*}
		\tag*{$\operatorname{CSJ(\phi)}$}
		\int_{B(x,R+(1+a_0)r)} f^2\, d\Gamma(\psi,\psi)
		\le C_1 \int_{U \times U^*} (f(x)-f(y))^2\, J(dx,dy)
		+ \frac{C_2}{\phi(r)} \int_{B(x,R+(1+a_0)r)} f^2\, dm,
	\end{equation*}
	where
	\[
	U = B(x,R+r) \setminus B(x,R),
	\qquad
	U^* = B(x,R+(1+a_0)r) \setminus B(x,R-a_0 r).
	\]
	
%
	\item
	We say that the \textbf{capacity upper bound}
	\hypertarget{cap}{$\operatorname{Cap(\phi)_{\le}}$} holds if there exist
	constants $A_1, A_2, C_1 \in (1,\infty)$ such that for all
	$x \in X$ and $0 < r < \diam(X,d)/A_2$, there exists a cutoff function
	$\psi$ for $B(x,r) \subset B(x,A_1 r)$ satisfying
	\[
	\mathcal{E}(\psi,\psi)
	\le C_1 \frac{V(x,r)}{\phi(r)}.
	\]
\end{enumerate}

\end{definition}
\begin{remark} \label{r:covering}
Assuming \hyperlink{cap}{$\operatorname{Cap}(\phi)_{\le}$}, we may, without loss of generality,
take $A_1 = 2$. Indeed, this can be achieved by covering $B(x,r)$ with a uniformly bounded
number of balls of radius $r/A_1$ whose centers lie in $B(x,r)$, and then taking the maximum 
of the corresponding cutoff functions associated with these smaller balls.  For such a covering argument, see, for instance, \cite[Lemma~6.2]{Mur24}. 

By \cite[Proposition 2.3-(5)]{CKW-hke}, under \hyperlink{vd}{$\on{VD}$}, and \hyperlink{jphi}{$\on{J(\phi)_\le}$} we have the implication 
\begin{equation} \label{e:csj-cap}
	\mbox{\hyperlink{csj}{$\operatorname{CSJ(\phi)}$}} \implies \mbox{\hyperlink{cap}{$\operatorname{Cap(\phi)_{\le}}$.}}
\end{equation}
\end{remark}

Given a Dirichlet form $(\mathcal{E},\mathcal{F})$, there is an associated \textbf{Markov semigroup} $(P_t)_{t \ge 0}$ on $L^2(X,m)$. Furthermore, by \cite[Theorem 1.3.1 and Lemma 1.3.4]{FOT} the Dirichlet form $(\mathcal{E},\mathcal{F})$ is given in terms of the semigroup by
\begin{align*}
	\mathcal{F}&= \set{f \in L^2(X,m): \lim_{t \downarrow 0} \frac{1}{t}\langle f - P_t f, f \rangle < \infty },  \\
	\mathcal{E}(f,f) &=  \lim_{t \downarrow 0} \frac{1}{t}\langle f - P_t f, f \rangle, \q \mbox{for all $f \in \mathcal{F}$,} 
\end{align*}
where $\langle \cdot,\cdot \rangle$ denotes the $L^2(X,m)$ inner product.
The \textbf{heat kernel} associated with the Markov semigroup $\{P_t\}$ (if it exists) is a family of measurable functions $p(t,\cdot,\cdot):X  \times X  \mapsto [0,\infty)$ for every $t > 0$, such that
\begin{align*}
	P_tf(x) &= \int p(t,x,y)  f(y)\, m(dy), \q \mbox{for all $f \in L^2(X,m), t>0$ and $x \in X$,} \\
	p(t,x,y) &= p(t,y,x), \q \mbox{for all $x, y \in X$ and $t>0$,}   \\
	p(t+s,x,y)&= \int p(s,x,y) p(t,y,z) \,m(dy), \q \mbox{for all $t,s>0$ and $x,y \in X$.}  
\end{align*}

\begin{definition} \label{d:stable}
	We say that $(X,d,m,\mathcal{E},\mathcal{F})$
	satisfies the \textbf{stable-like heat kernel estimates} \hypertarget{shk}{$\on{HK}(\phi)$}
	if there exist $C_1  \in (1,\infty)$ and a heat kernel $\{ p_{t} \}_{t>0}$ of
	the semigroup corresponding to the Dirichlet form $(\mathcal{E},\mathcal{F})$ such that for each $t \in (0,\infty)$,
	\begin{equation} \label{eq:shk-upper}
		p_{t}(x,y) \le C_1 \biggl( \frac{1}{m\bigl(B(x,\phi^{-1}(t))\bigr)} \wedge \frac{t}{m\bigl(B(x,d(x,y))\bigr) \phi(d(x,y))} \biggr)
	\end{equation}
	and
	\begin{equation} \label{eq:shk-lower}
		p_{t}(x,y) \ge C_1^{-1} \biggl( \frac{1}{m\bigl(B(x,\phi^{-1}(t))\bigr)} \wedge \frac{t}{m\bigl(B(x,d(x,y))\bigr) \phi(d(x,y))} \biggr)
	\end{equation}
	for $m$-a.e.\ $x,y \in X$, where $\phi^{-1}(\cdot)$ denotes the
	inverse of the homeomorphism $\phi(\cdot) \colon [0,\infty)\to [0,\infty)$.
\end{definition}

The following is the first main result of the work and provides a sufficient condition for cutoff Sobolev inequality for pure jump process. 
\begin{theorem}  \label{t:main} 
	Let $(X,d,m)$ be a metric measure space  that satisfies 	\hyperlink{vd}{$\on{VD}$}. Let $(\mathcal{E},\mathcal{F})$ be a pure jump Dirichlet form on $L^2(X,m)$ that satisfies 
  \hyperlink{jphi}{$\on{J(\phi)}$}, and \hyperlink{cap}{$\on{Cap(\phi)_{\le}}$}. Then $(\mathcal{E},\mathcal{F})$ satisfies the cutoff energy inequality \hyperlink{csj}{$\operatorname{CSJ}(\phi)$}.
\end{theorem}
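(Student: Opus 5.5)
We prove the theorem with a non-local version of Whitney blending. The key idea is to build the cutoff function $\psi$ for $B(x,R)\subset B(x,R+r)$ by adapting it to the given $f$: on regions where $f$ oscillates a lot, $\psi$ should vary on fine scales. Fix $x,R,r,f$. Set $U=B(x,R+r)\setminus B(x,R)$, and consider a Whitney-type decomposition of the annulus adapted to $f$. Concretely, cover $U$ by balls $B_i=B(z_i,s_i)$ with bounded overlap (using \hyperlink{vd}{$\on{VD}$}), with $s_i\le a_0 r/10$, such that $2B_i\subset U^*$ and neighbouring balls have comparable radii. The scales $s_i$ are chosen by a stopping rule comparing the local energy of $f$ to its local mass:
\[
\int_{B_i}\int_{U^*}(f(y)-f(z))^2J(y,z)\,m(dz)\,m(dy)\ \text{versus}\ \frac{1}{\phi(s_i)}\int_{B_i}f^2\,dm .
\]
Following Eriksson-Bique, we stop at the largest scale at which the energy term is dominated. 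Alternatively, and more simply, we use a layered construction. Split the annulus into $N\asymp$ (number of dyadic layers) concentric shells of width $r/N$, apply \hyperlink{cap}{$\on{Cap(\phi)_{\le}}$} with $A_1=2$ (Remark~\ref{r:covering}) on balls of radius comparable to the shell width, and obtain local cutoff functions $\psi_i$ with $\mathcal{E}(\psi_i,\psi_i)\lesssim V(z_i,s_i)/\phi(s_i)$. Blending them (products/maxima along a chain of shells, or $\psi=\sum_j N^{-1}\psi^{(j)}$ where $\psi^{(j)}$ is a max of local capacity functions at layer $j$) yields a cutoff function whose energy measure is spread over the layers.

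The estimate then proceeds as follows. Step 1 is to truncate the jump kernel. Write $\Gamma(\psi,\psi)$ as the sum of short jumps ($d(y,z)<s(y)$, with $s(y)$ the local Whitney scale) and long jumps. For long jumps, $(\psi(y)-\psi(z))^2\le 1$. By \hyperlink{jphi}{$\on{J(\phi)_\le}$} and \hyperlink{vd}{$\on{VD}$}, $\int_{d(y,z)\ge s}J(y,z)\,m(dz)\lesssim 1/\phi(s)$, so their contribution is $\lesssim \int f^2/\phi(s(y))\,dm$. Step 2 handles short jumps inside one Whitney ball. Here the local capacity bound gives $\Gamma(\psi_i,\psi_i)(2B_i)\lesssim V(z_i,s_i)/\phi(s_i)$. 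To pass from the mass of $\Gamma$ to $\int f^2\,d\Gamma$, we replace $f^2$ by its average $f_{B_i}^2$. The error $\int_{B_i}(f-f_{B_i})^2\,d\Gamma(\psi_i,\psi_i)$ is the delicate term. We control it using a local Poincaré-type inequality, which follows from \hyperlink{jphi}{$\on{J(\phi)_\ge}$}: for $y,z\in B_i$, $J(y,z)\gtrsim 1/(V(z_i,s_i)\phi(s_i))$, so $\int_{B_i}(f-f_{B_i})^2dm\lesssim\phi(s_i)\int_{B_i\times B_i}(f(y)-f(z))^2J$. The lower jump bound gives this Poincaré inequality for free, which is where the non-local setting is easier than \cite{Eri}.

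The real obstacle is that $\Gamma(\psi_i,\psi_i)$ is a measure, not bounded by $V/\phi$ pointwise, so we cannot simply pull out $(f-f_{B_i})^2$. To handle this, we take $\psi_i$ to be the \emph{equilibrium-type} (energy-minimizing, harmonic) cutoff function, or we average over the one-parameter family of level-set cutoffs $\min(1,(2-d(\cdot,z_i)/s)_+)$-type composites $\psi_i^{t}$. The co-area-like averaging over thresholds $t\in[0,1]$ of $\one_{\{\psi_i>t\}}$ converts $\int f^2\,d\Gamma$ into an integral of $f^2$ against jump pairs crossing level sets. This is then bounded using $J(\phi)_\le$, together with a choice of threshold (pigeonhole over $N$ layers) for which $\int f^2\,d\Gamma$ is at most $1/N$ of the total. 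This pigeonhole over layers, with $N$ chosen adapted to $f$ via the stopping rule, is the step I expect to be hardest. It is where blending across scales is needed, so that the $\phi(r)^{-1}$ factor is achieved rather than $\phi(s_i)^{-1}$: the small-scale mass terms $\int_{B_i} f^2/\phi(s_i)$ must be absorbed into the energy term via the stopping condition.

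Finally, we sum over $i$ using bounded overlap. The cross terms from neighbouring balls with comparable scales are handled by a quasi-partition of unity and comparable-radius chaining, so that $|f_{B_i}-f_{B_j}|^2$ is controlled by the energy on $B_i\cup B_j$, again via $J(\phi)_\ge$. This gives $\operatorname{CSJ}(\phi)$ with $U\times U^*$ on the right, since all balls and their doubles lie in $U^*$.
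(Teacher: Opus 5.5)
Your proposal correctly locates the crux: bounding $\int f^2\,d\Gamma(\psi,\psi)$, or your error term $\int_{B_i}(f-f_{B_i})^2\,d\Gamma(\psi_i,\psi_i)$, when $\operatorname{Cap}(\phi)_{\le}$ only controls the total mass of $\Gamma(\psi_i,\psi_i)$. But it leaves exactly this step open, and that step is essentially the whole theorem. None of the remedies you list closes it.

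\begin{enumerate}[(i)]
\item The Poincar\'e inequality from $\operatorname{J}(\phi)_{\ge}$ bounds $(f-f_{B_i})^2$ against $m$, not against a measure that may concentrate.
\item There is no co-area formula for a quadratic non-local energy. Moreover, the level-set indicators $\one_{\{\psi_i>t\}}$ typically have infinite energy (already for $\phi(s)=s^\beta$ on $\mathbb{R}^d$ with $\beta\ge 1$).
\item Pigeonholing over $N$ layers still needs a bound on $\sum_j\int f^2\,d\Gamma(\psi^{(j)},\psi^{(j)})$. That is the same cutoff Sobolev problem at scale $r/N$, so the argument is circular.
\item Any construction at scales $s_i\ll r$ produces mass terms $\phi(s_i)^{-1}\int_{B_i}f^2\,dm$, both from long jumps and from $f_{B_i}^2\,\Gamma(\psi_i,\psi_i)(2B_i)$. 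You give no mechanism for absorbing these into $\phi(r)^{-1}\int f^2\,dm$.
\item Averaging $N$ layers of width $r/N$ gives, from the capacity bound, only an energy bound of order $N^{\beta-2}m(U^*)/\phi(r)$ when $\phi(s)=s^\beta$. This is not uniform in $N$ for $\beta>2$, a regime the theorem must cover.
\end{enumerate}
The freedom in $\operatorname{CSJ}(\phi)$ to let $\psi$ depend on $f$ is never turned into an actual estimate.

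The missing idea is the paper's comparison principle for equilibrium potentials, which makes pointwise information on $\Gamma(\psi,\psi)$ unnecessary. Let $\psi$ be the equilibrium potential of $K$ in $\Omega$, and let $g\in\mathcal{F}\cap C(X)$ satisfy $0\le g\le 1$, $g\equiv 1$ on $A$, and $g\equiv 0$ near $\Omega^c$ (resp.\ near $K$). Then $\psi\vee g$ (resp.\ $\psi\wedge(1-g)$) is an admissible competitor, and comparing energies localizes to $\Gamma(\psi,\psi)(A)\le 2\,\Gamma(g,g)(A)$. Apply this with $g_k=2^{-k}\bigl((f-2^k)_+\wedge 2^k\bigr)$ on $A_k=\{f\ge 2^{k+1}\}$ and sum using $\sum_k\Gamma(f_k,f_k)\le\Gamma(f,f)$. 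This gives $\int_U f^2\,d\Gamma(\psi,\psi)\le 48\,\Gamma(f,f)(U)$, with no $L^2$ term, for every $f$ vanishing near $K$ or near $\Omega^c$.

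The Whitney cover is then used not to build $\psi$ but to blend $f$. One produces $f_1,f_2\in\mathcal{F}\cap C(X)$ with $\abs{f}\le\max(\abs{f_1},\abs{f_2})$, $f_1\equiv 0$ off $B(x,7r/4)$, and $f_2\equiv 0$ on $\overline{B(x,5r/4)}$. Their energies are bounded by that of $f$ plus $\phi(r)^{-1}\int f^2\,dm$, and membership $f_i\in\mathcal{F}$ uses $\mathcal{F}=\widehat{\mathcal{F}}$. Small Whitney balls are harmless there, because only differences of averages appear, controlled by Jensen and $\operatorname{J}(\phi)_{\ge}$. Mass terms arise only from balls of radius $\gtrsim r$ and from jumps of length $\gtrsim r$.

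Finally, $\psi$ is a maximum of such equilibrium potentials, for the form truncated at $\rho=r/44$, over a $\rho$-net covering $B(x,R+r/2)$. All mass terms and the truncation error $\Gamma-\Gamma^{(\rho)}\lesssim\phi(\rho)^{-1}$ live at scale $\rho\asymp r$. This yields the $\phi(r)^{-1}\int f^2\,dm$ term with a cutoff function that is even independent of $f$.
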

%
 
Theorem \ref{t:main} along with known characterizations of stable-like heat kernel bounds imply  an affirmative answer to \cite[Conjecture 1.14]{GHH18} as we state below.
\begin{theorem} \label{t:char-hke}
	Let $(X,d,m)$ be a metric measure space  that satisfies 	\hyperlink{vd}{$\on{VD}$} and \hyperlink{qrvd}{$\on{QRVD}$}. Let $(\mathcal{E},\mathcal{F})$ be a pure jump Dirichlet form on $L^2(X,m)$. Then the following are equivalent:
	\begin{enumerate}[(a)]
		\item Jump kernel bound  \hyperlink{jphi}{$\on{J(\phi)}$}, and capacity upper bound \hyperlink{cap}{$\on{Cap(\phi)_{\le}}$}.
		\item Stable-like heat kernel bound \hyperlink{shk}{$\on{HK}(\phi)$}.
	\end{enumerate}
\end{theorem}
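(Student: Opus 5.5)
The plan is to deduce Theorem~\ref{t:char-hke} from Theorem~\ref{t:main} together with the known characterization of \hyperlink{shk}{$\on{HK}(\phi)$} in terms of jump kernel bounds and the cutoff Sobolev inequality. Concretely, I will use the result \cite[Theorem 1.13]{CKW-hke} (and its variants \cite[Theorem 1.12]{GHH18}, \cite[Theorem 1.5]{MS}): under \hyperlink{vd}{$\on{VD}$} and a reverse volume doubling type hypothesis, \hyperlink{shk}{$\on{HK}(\phi)$} is equivalent to \hyperlink{jphi}{$\on{J(\phi)}$} together with \hyperlink{csj}{$\on{CSJ}(\phi)$}. Since we only assume \hyperlink{qrvd}{$\on{QRVD}$} rather than \hyperlink{rvd}{$\on{RVD}$}, I would invoke the version of this characterization established under \hyperlink{qrvd}{$\on{QRVD}$} (as in \cite{Mal}, or \cite{MS} for the graph-like setting). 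If such a version is not literally available, I would check that the proofs in \cite{CKW-hke} use \hyperlink{rvd}{$\on{RVD}$} only at scales $r > D_x$, since for $r \le D_x$ the ball $B(x,r)$ is the singleton $\{x\}$ and the required estimates are trivial.

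For (a) $\Rightarrow$ (b), Theorem~\ref{t:main} upgrades \hyperlink{jphi}{$\on{J(\phi)}$} and \hyperlink{cap}{$\on{Cap(\phi)_{\le}}$} to \hyperlink{csj}{$\on{CSJ}(\phi)$}. Note that Theorem~\ref{t:main} needs only \hyperlink{vd}{$\on{VD}$} and not \hyperlink{qrvd}{$\on{QRVD}$}. The characterization then yields \hyperlink{shk}{$\on{HK}(\phi)$}.

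For (b) $\Rightarrow$ (a), the same characterization gives \hyperlink{jphi}{$\on{J(\phi)}$} and \hyperlink{csj}{$\on{CSJ}(\phi)$}. The implication \eqref{e:csj-cap} of Remark~\ref{r:covering} then gives \hyperlink{cap}{$\on{Cap(\phi)_{\le}}$}. Alternatively, \hyperlink{jphi}{$\on{J(\phi)}$} follows directly from \hyperlink{shk}{$\on{HK}(\phi)$} via $J(x,y)=\lim_{t\downarrow0} p_t(x,y)/(2t)$, by the standard argument.

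The only real obstacle is the bookkeeping on hypotheses. First, one must confirm that the cited characterization holds under \hyperlink{qrvd}{$\on{QRVD}$}. Second, one must check that the formulation of \hyperlink{csj}{$\on{CSJ}(\phi)$} here, with the constant $a_0$ and the sets $U,U^*$, matches the one in the cited work. The deep input, Theorem~\ref{t:main}, is assumed.
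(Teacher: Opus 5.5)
Your architecture matches the paper's: Theorem~\ref{t:main} turns $\operatorname{Cap}(\phi)_{\le}$ into $\operatorname{CSJ}(\phi)$, a known characterization of $\operatorname{HK}(\phi)$ by $\operatorname{J}(\phi)+\operatorname{CSJ}(\phi)$ does the rest, and \eqref{e:csj-cap} closes (b)$\Rightarrow$(a). The genuine gap is that you never treat the case $\diam(X,d)<\infty$, which the statement explicitly allows ($\operatorname{QRVD}$ and $\operatorname{Cap}(\phi)_{\le}$ are only imposed for radii below $\diam(X,d)$). The characterization you cite, \cite[Theorem~1.13]{CKW-hke}, is a result on unbounded spaces. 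Its proof of $\operatorname{J}(\phi)+\operatorname{CSJ}(\phi)\Rightarrow\operatorname{HK}(\phi)$ uses $\diam(X,d)=\infty$ in an essential way to get the on-diagonal upper bound (see \cite[Remark~2.41]{KM}). This is not a matter of matching the formulation of $\operatorname{CSJ}(\phi)$, or of $\operatorname{QRVD}$ versus $\operatorname{RVD}$. For bounded $X$ your argument invokes a theorem outside its hypotheses, so (a)$\Rightarrow$(b) is unproved there.

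The paper handles the bounded case by a different route. After reducing to $\operatorname{RVD}$ (adapting \cite{Mal} to finite diameter), it takes from the argument in \cite[\textsection 4.3]{CKW-hke} only the two-sided exit time bounds implied by $\operatorname{J}(\phi)+\operatorname{CSJ}(\phi)$. It then uses \cite[Theorem~2.40]{KM}, which rests on the mean value inequalities of \cite{GHH-mv,GHH-lb}, to pass from $\operatorname{J}(\phi)$ plus exit time bounds to $\operatorname{HK}(\phi)$; this bypasses the on-diagonal step that fails. For (b)$\Rightarrow$(a) one must likewise check that the proof of $\operatorname{HK}(\phi)\Rightarrow\operatorname{CSJ}(\phi)$ in \cite{CKW-hke} survives in finite diameter.

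Separately, on $\operatorname{QRVD}$: your fallback of verifying that \cite{CKW-hke} uses $\operatorname{RVD}$ only at scales above $D_x$ is an unverified claim, not an argument. The paper instead passes to Malmquist's auxiliary space \cite[\textsection 3]{Mal}, which satisfies $\operatorname{RVD}$. It transfers $\operatorname{J}(\phi)+\operatorname{CSJ}(\phi)$ to that space via \cite[(4.13), Proposition~4.1]{Mal}, applies \cite[Theorem~1.13]{CKW-hke} there, and transfers $\operatorname{HK}(\phi)$ back via \cite[(4.6), (4.7)]{Mal}. For (b)$\Rightarrow$(a) in the unbounded case it cites \cite[Theorem~1.20]{Mal}.
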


\begin{remark}
	\begin{enumerate}[(a)]
		\item Theorems \ref{t:main} and \ref{t:char-hke} also hold for the more general notion of regular scale function introduced in \cite[Definition 5.4]{BM}. These extensions follow from the present formulation by a quasi-symmetric change of metric, as in \cite[Proof of Lemma 5.9]{BM}. 
		\item Theorem \ref{t:main} implies that the cutoff Sobolev inequality appearing in the characterization of stable-like heat kernel bounds in the discrete-time graph setting in \cite[Theorem 1.5]{MS} can be replaced by capacity upper bounds analogous to those in Theorem \ref{t:char-hke}.
		\item We expect Theorem \ref{t:char-hke} to simplify the verification of stable-like heat kernel estimates in concrete examples. For instance, the proof of stable-like heat kernel bounds for the boundary trace of reflected diffusions in \cite[Theorem 5.14]{KS} relies on establishing exit time bounds together with jump kernel estimates. In this case, proving capacity upper bounds is substantially simpler than proving exit time bounds.
	\end{enumerate}
\end{remark}

\subsection{Outline of the proof}

Since Theorem \ref{t:char-hke} follows readily from Theorem \ref{t:main} together with known results, we only outline the proof of Theorem \ref{t:main}.
The Whitney blending argument proceeds in three main steps:
\begin{enumerate}
	\item We establish a cutoff Sobolev-type inequality in the case where the cutoff function is chosen as the equilibrium potential for $K \subset \Omega$, for functions that vanish either in a neighborhood of $K$ or in a neighborhood of $\Omega^c$ (Lemma \ref{l:zerobdy-cs}).
	
	\item Given $f \in \mathcal{F}$, $x_0 \in X$, and $r>0$, we construct functions $f_1,f_2 \in \mathcal{F}$ such that
	$\lvert f \rvert \le \lvert f_1 \rvert + \lvert f_2 \rvert$, where $f_1$ vanishes in a neighborhood of $\overline{B(x_0,r)}$ and $f_2$ vanishes in a neighborhood of $B(x_0,2r)^c$.
	This allows us to apply the cutoff Sobolev-type inequality from  the previous step to $f_1$ and $f_2$, yielding a cutoff Sobolev inequality for the equilibrium potential associated with $\overline{B(x_0,r)} \subset B(x_0,2r)$.
	A key difficulty is controlling the energies of $f_1$ and $f_2$ in terms of the energy of $f$.
	This is accomplished using Whitney covers and constitutes the most delicate part of the proof (see Lemma \ref{l:wb}).
	The construction of $f_1,f_2$ also relies on the explicit description of the domain of the Dirichlet form established in Proposition \ref{p:domain}.

	\item The previous step produces a non-local analogue of the simplified cutoff Sobolev inequality introduced in \cite[Definition 6.1]{Mur24} (see Proposition \ref{p:css}).
	A self-improvement argument, following \cite[Proof of Lemma~6.2]{Mur24}, then yields the desired cutoff Sobolev inequality (see \textsection \ref{ss:self-improve}).
\end{enumerate}
{\bf Notation}. In the following, we will use the notation  $A \lesssim B$ for quantities $A$ and $B$ to indicate the existence of an
implicit constant $C \ge 1$ depending on some inessential parameters such that $A \le CB$. We write $A \asymp B$, if $A \lesssim B$ and $B \lesssim A$.
\section{Proofs}

\subsection{Identification of the domain of the Dirichlet form}
The Whitney blending technique involves the construction of a piecewise defined function
(see \eqref{e:def-ext}). In order to verify that such a function belongs to the domain of the
Dirichlet form, it is necessary to have a concrete sufficient condition ensuring membership
in the domain. In the local setting, such a criterion is provided in \cite[Lemma~4.9]{Eri}.
Here, we develop a non-local analogue based on an explicit description of the domain.

Let $(\mathcal{E},\mathcal{F})$ be a pure jump Dirichlet form  on $L^2(X,m)$  with jump measure $J$. A natural candidate for the domain $\mathcal{F}$   is the following subspace:
\begin{equation} \label{e:def-domain}
	\widehat{\mathcal{F}}
	:= \left\{ f \in L^2(X,m) : \int_{X \times X} (f(x)-f(y))^2 \, J(dx,dy) < \infty \right\}.
\end{equation}
While it is immediate that \(\mathcal{F} \subset \widehat{\mathcal{F}}\),
the converse inclusion requires proof. Several ideas used in establishing the equality
\(\mathcal{F} = \widehat{\mathcal{F}}\) in following proposition will later reappear in the Whitney blending argument
in \textsection~\ref{ss:whitney}. We also note that a considerably shorter proof of this result
was obtained earlier under the \emph{a priori} stronger assumption of stable-like heat kernel
estimates in \cite[Theorem~2.40(d)]{KM}.

\begin{prop} \label{p:domain} 	Let $(X,d,m)$ be a metric measure space  that satisfies 	\hyperlink{vd}{$\on{VD}$}. Let $(\mathcal{E},\mathcal{F})$ be a pure jump Dirichlet form that satisfies 
	\hyperlink{jphi}{$\on{J(\phi)}$}, and \hyperlink{cap}{$\on{Cap(\phi)_{\le}}$}. Then we have $\mathcal{F} = \wh{\mathcal{F}}$.
\end{prop}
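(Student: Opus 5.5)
Since $\mathcal{F}\subset\widehat{\mathcal{F}}$ is immediate, the task is to show that every $f\in\widehat{\mathcal{F}}$ lies in $\mathcal{F}$. The plan is to approximate $f$ in the norm $\widehat{\mathcal{E}}_1(f,f):=\int (f(x)-f(y))^2J(dx,dy)+\norm{f}_2^2$ by elements of $\mathcal{F}$ whose $\mathcal{E}_1$-norms stay bounded. Since $\mathcal{F}$ is closed and $\mathcal{E}$ agrees with $\widehat{\mathcal{E}}$ on $\mathcal{F}$, a bounded $\mathcal{E}_1$ sequence converging in $L^2$ has a weakly convergent subsequence in $(\mathcal{F},\mathcal{E}_1)$, and its limit must be $f$. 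It therefore suffices to produce $f_n\in\mathcal{F}$ with $f_n\to f$ in $L^2$ and $\sup_n \widehat{\mathcal{E}}(f_n,f_n)<\infty$.

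\textbf{Reductions.} By truncation, $\widehat{\mathcal{F}}$ is stable under normal contractions, so we may assume $f$ is bounded. Multiplying by cutoff functions from \hyperlink{cap}{$\on{Cap(\phi)_{\le}}$} on balls $B(x_0,R)$ with $R\to\infty$ then reduces to compactly supported $f$. We need $\widehat{\mathcal{E}}(f\psi_R)$ to be bounded by $\widehat{\mathcal{E}}(f)+\norm{f}_\infty^2\mathcal{E}(\psi_R)$, and the latter is $\lesssim V(x_0,R)/\phi(R)$, which need not stay bounded. I would therefore first use $\norm{f}_2<\infty$ and $\mathrm{J}(\phi)_\le$ to bound $\int f^2\,d\Gamma(\psi_R,\psi_R)$ by an averaging argument; this is the tail term that is usually handled via \hyperlink{vd}{$\on{VD}$}. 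Alternatively, I would avoid this issue by working locally throughout.

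\textbf{Main step: mollification by a partition of unity.} Fix a scale $r>0$ and an $r$-net $\{x_i\}$. Using \hyperlink{cap}{$\on{Cap(\phi)_{\le}}$} with $A_1=2$ (Remark~\ref{r:covering}), take cutoff functions $\psi_i\in\mathcal{F}$ for $B(x_i,r)\subset B(x_i,2r)$ with $\mathcal{E}(\psi_i)\lesssim V(x_i,r)/\phi(r)$. Normalize them to a partition of unity $\eta_i=\psi_i/\sum_j\psi_j$; by \hyperlink{vd}{$\on{VD}$} the sum has bounded overlap and is at least $1$, and each $\eta_i$ lies in $\mathcal{F}$ as a Lipschitz combination of finitely many local elements. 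Define the discrete convolution $f_r=\sum_i f_{B_i}\eta_i\in\mathcal{F}$, where $f_{B_i}$ is the average of $f$ over $B(x_i,2r)$; this is a locally finite sum, and for compactly supported $f$ a finite one. Then $f_r\to f$ in $L^2$ as $r\to0$ by Lebesgue differentiation, which is available under \hyperlink{vd}{$\on{VD}$}.

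The key estimate is
\[
\mathcal{E}(f_r)\lesssim \widehat{\mathcal{E}}(f),
\]
uniformly in $r$. Writing $f_r(x)-f_r(y)$ in terms of differences $f_{B_i}-f_{B_j}$ multiplied by $\eta$-differences, I would split into two regimes. For $d(x,y)\ge r$, I would bound $\abs{f_r(x)-f_r(y)}$ by local averages of $\abs{f(u)-f(v)}$ with $u$ near $x$ and $v$ near $y$. Then $\mathrm{J}(\phi)$ together with \hyperlink{vd}{$\on{VD}$} shows that $J(x,y)$ is comparable to $J(u,v)$ (this is where both sides of the jump kernel bound are used), so this contribution is $\lesssim\widehat{\mathcal{E}}(f)$. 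For the near-diagonal regime, the terms are of the form $\sum_{i\sim j}\abs{f_{B_i}-f_{B_j}}^2\,\Gamma(\eta_i,\eta_i)(B_j)$. Here $\Gamma(\eta_i,\eta_i)$ has total mass $\lesssim V(x_i,r)/\phi(r)$, while
\[
\abs{f_{B_i}-f_{B_j}}^2\lesssim \frac{\phi(r)}{V(x_i,r)}\int_{B_i^*}\int_{B_i^*}(f(u)-f(v))^2J(u,v)\,dm\,dm,
\]
using $\mathrm{J}(\phi)_\ge$ on the enlarged ball $B_i^*$. The product is therefore controlled, and bounded overlap sums it to $\lesssim\widehat{\mathcal{E}}(f)$. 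The main obstacle is that $\Gamma(\eta_i,\eta_i)$ is non-local: it carries mass far from $B(x_i,2r)$, where $\eta_i=0$ but $\eta_i(y)\neq0$. For such far pairs, I would bound the coefficient $f(x)-f_{B_i}$ directly, and use the tail estimate $\int_{B(x,\rho)^c}J(x,y)\,m(dy)\lesssim1/\phi(\rho)$, which follows from $\mathrm{J}(\phi)_\le$ and \hyperlink{vd}{$\on{VD}$}, to sum dyadic annuli. This mirrors the truncated-form ideas the paper announces for the later Whitney blending argument.
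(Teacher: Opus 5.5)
Your construction is the paper's: normalized $\mathrm{Cap}(\phi)_{\le}$ cutoffs give the partition of unity of Lemma~\ref{l:partition}, you form $f_r=\sum_i f_{B_i}\eta_i$, you prove $\mathcal{E}(f_r,f_r)\lesssim\widehat{\mathcal{E}}(f)$ uniformly in $r$ by splitting at $d(x,y)\approx r$, and you conclude by closedness. Getting $f_r\to f$ in $L^2$ from Lebesgue differentiation with maximal-function domination is fine; the paper instead shows $\norm{f-f_r}_2^2\lesssim\phi(r)\,\widehat{\mathcal{E}}(f)$ using $\mathrm{J}(\phi)_{\ge}$. Two steps, however, do not work as you describe them.

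\textbf{Membership $f_r\in\mathcal{F}$.} You only obtain this for compactly supported $f$, and the reduction to that case is left open. Bounding $\int f^2\,d\Gamma(\psi_R,\psi_R)$ uniformly in $R$ is essentially a cutoff Sobolev inequality at scale $R$. $\mathrm{Cap}(\phi)_{\le}$ only controls the total mass $\mathcal{E}(\psi_R,\psi_R)\lesssim V(x_0,R)/\phi(R)$, which may diverge, and says nothing about the density of $\Gamma(\psi_R,\psi_R)$. Moreover, this proposition is used (in Lemma~\ref{l:wb}) on the way to $\mathrm{CSJ}(\phi)$, so you cannot borrow such an estimate here. No reduction is needed, not even the one to bounded $f$. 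The partial sums $f_{r,n}$ over net points in $B(x_0,n)$ lie in $\mathcal{F}$. They converge to $f_r$ in $L^2$ by dominated convergence, with dominant $\sum_i\abs{f_{B_i}}\eta_i\in L^2$. They also satisfy the crude bound $\mathcal{E}(f_{r,n},f_{r,n})\lesssim\sum_i f_{B_i}^2\,\mathcal{E}(\eta_i,\eta_i)\lesssim\phi(r)^{-1}\norm{f}_2^2$, uniformly in $n$. Closedness then gives $f_r\in\mathcal{F}$.

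\textbf{The near-diagonal ``main obstacle''.} The obstacle you flag does not arise, and the remedy you propose would fail. Once that regime is genuinely restricted to $d(x,y)<Cr$, every $i$ with $\eta_i(x)\neq\eta_i(y)$ has $x,y\in B(x_i,(C+2)r)$. Subtract a constant depending only on $x$ (the paper uses $f_{B(x,r)}$). The resulting coefficient is then bounded, uniformly in $x\in B(x_i,(C+2)r)$, by $\frac{\phi(r)}{V(x_i,r)}\int_{B^*}\int_{B^*}(f(u)-f(v))^2J(u,v)\,m(du)\,m(dv)$ with $B^*=B(x_i,(C+3)r)$. The long-range part of $\Gamma(\eta_i,\eta_i)$ enters only through $\Gamma(\eta_i,\eta_i)(X)=\mathcal{E}(\eta_i,\eta_i)\lesssim V(x_i,r)/\phi(r)$.

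By contrast, bounding $f(x)-f_{B_i}$ crudely and summing $\int_{B(x,\rho)^c}J(x,y)\,m(dy)\lesssim\phi(\rho)^{-1}$ over annuli only reproduces the crude bound $\phi(r)^{-1}\norm{f}_2^2$, which blows up as $r\downarrow 0$. Long-range pairs must go through the averaging argument of your first regime. There you only need the one-sided bound $J(x,y)\lesssim J(u,v)$ for $u\in B(x,4r)$, $v\in B(y,4r)$. This holds because $d(u,v)\le d(x,y)+8r\le 9\,d(x,y)$ when $d(x,y)\ge r$. Two-sided comparability, as you state it, fails at threshold $r$; the paper takes $d(x,y)\ge 18r$ so that Lemma~\ref{l:ujs} applies.
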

We first establish some preliminary lemmas that are needed for the proof of Proposition \ref{p:domain}.
\begin{lem} \label{l:ujs} If the jump measure $J$  of the pure-jump Dirichlet form $(\mathcal{E},\mathcal{F})$ on $L^2(X,m)$ satisfies \hyperlink{jphi}{$\operatorname{J(\phi)}$}, then there exists $C>0$ such that 
	for all $x,y \in X$ and $r_1,r_2 \in (0,d(x,y)/4)$, we have 
	\[
 \frac{1}{V(x,r_1) V(y,r_2)}	\int_{B(x,r_1)} \int_{B(y,r_2)} J(z,w)  \,m(dz)\,m(dw)\le  C J(x,y),
	\]
	and 
	\[
	\frac{1}{V(y,r_2)} \int_{B(y,r_2)} J(x,w)\,m(dw) \le C J(x,y).
	\]
\end{lem}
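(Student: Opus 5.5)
The approach is to reduce both averages to a pointwise comparison. I will show that for $z \in B(x,r_1)$ and $w \in B(y,r_2)$ one has $J(z,w) \lesssim J(x,y)$, with a constant depending only on $C_D$, $C_\phi$, $\beta_1$ and the constants in \hyperlink{jphi}{$\on{J(\phi)}$}. Averaging this bound over $B(x,r_1)\times B(y,r_2)$ gives the first inequality immediately. The second inequality is the same pointwise bound with $z=x$, averaged over $w \in B(y,r_2)$.

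Fix such $z,w$ and set $d=d(x,y)$. Since $r_1,r_2<d/4$, the triangle inequality gives
\[
d(z,w) \ge d - r_1 - r_2 \ge d/2 \quad\text{and}\quad d(z,w) \le d + r_1+r_2 \le 3d/2 .
\]
The upper bound in \hyperlink{jphi}{$\on{J(\phi)}$} gives
\[
J(z,w) \le \frac{C}{m(B(z,d(z,w)))\,\phi(d(z,w))}.
\]

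I then compare the two factors in the denominator with those at $(x,y)$.
\begin{itemize}
\item \emph{Scale factor.} Since $\phi$ is increasing and $d(z,w)\ge d/2$, \eqref{e:scale} gives $\phi(d(z,w)) \ge \phi(d/2) \ge C_\phi^{-1}2^{-\beta_1}\phi(d)$.
\item \emph{Volume factor.} $B(z,d(z,w)) \supset B(z,d/2)$. Also $B(x,d) \subset B(z, d+r_1) \subset B(z,2d)$. Applying \hyperlink{vd}{$\on{VD}$} twice, $V(x,d) \le V(z,2d) \le C_D^2 V(z,d/2) \le C_D^2\, m(B(z,d(z,w)))$. Equivalently, one may invoke \eqref{e:vd-iter}.
\end{itemize}
Combining these, $J(z,w) \lesssim 1/(V(x,d)\phi(d))$. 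The lower bound in \hyperlink{jphi}{$\on{J(\phi)}$} shows that this is $\lesssim J(x,y)$.

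Here $J(x,y)$ means the chosen symmetric function, and \hyperlink{jphi}{$\on{J(\phi)}$} holds for all $x\neq y$. The pointwise bound therefore holds everywhere, not merely almost everywhere, and the averaging step is legitimate.

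There is no real obstacle. The only care needed is to centre the volume comparison at $z$ rather than $x$, since the bound in \hyperlink{jphi}{$\on{J(\phi)}$} is stated with the ball centred at the first variable. This is handled by the doubling step above.
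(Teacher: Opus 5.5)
Your argument is correct and is essentially the paper's proof: for $z\in B(x,r_1)$, $w\in B(y,r_2)$ the triangle inequality gives $d(z,w)\asymp d(x,y)$, and volume doubling together with \eqref{e:scale} then compares $J(z,w)$ with $J(x,y)$ pointwise before averaging (the paper records the two-sided comparison $J(z,w)\asymp J(x,y)$, while you prove only the direction that is needed). One small slip: \eqref{e:scale} with $r=d/2$, $R=d$ gives $\phi(d/2)\ge C_\phi^{-1}2^{-\beta_2}\phi(d)$, not $C_\phi^{-1}2^{-\beta_1}\phi(d)$ (the $\beta_1$ version can fail, e.g.\ for $\phi(t)=t^{\beta_2}$ with $\beta_2>\beta_1$), but this only changes the constant.
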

\begin{proof}
 Let $x,y \in X$,  $r_1,r_2 \in (0,d(x,y)/4)$.    Then by the triangle inequality, 
 \[
 d(x,y)/2 \le d(x,y)-r_1-r_2  \le d(z,w) \le d(x,y)+r_1+r_2 \le 2 d(x,y)
 \] 
 for all $z \in B(x,r_1), w \in B(y,r_2)$. Hence for all $z \in B(x,r_1), w \in B(y,r_2)$,   we have 
 \[
 J(z,w) \asymp \frac{1}{V(z,d(z,w)) \phi(d(z,w))} \stackrel{\eqref{e:vd-iter}, \eqref{e:scale}}{\asymp} \frac{1}{V(x,d(x,y))\phi(d(x,y))} \asymp J(x,y).
 \]
\end{proof}
We recall that an \emph{$r$-net} $N$ of  $X$ is a maximal $r$-separated subset of $X$; that is $d(x,y) \in \{0\} \cup [r,\infty)$ for all $x,y \in N$ and for any $\wt{N}\supsetneqq N$, there exists $x,y \in  \wt{N}$ such that $d(x,y) \in (0,r)$. We record the following \emph{bounded overlap property} for balls centered at an $r$-net.
For any $A>0$, there exists a constant $C=C(A)$, depending only on $A$ and the constants
in the volume doubling property \hyperlink{vd}{$\on{VD}$}, such that for any $r>0$ and any
$r$-net $N$,
\begin{equation} \label{e:overlap}
	\sum_{x \in N} \one_{B(x,Ar)} \le C\,\one .
\end{equation}
The following construction of partition of unity indexed by a net is standard in the setting of strongly local Dirichlet forms.
However, the non-local case requires a slightly different argument, and the verification
that $\psi_x \in \mathcal{F}$ is often omitted in the literature (see, for example,
\cite[Lemma 2.5]{Mur20} and the references therein). For this reason, we provide a detailed proof.

\begin{lem} \label{l:partition}
	There exist $C,c>0$ such that for any $r>0$ and for any $r$-net $N$, there exists a collection of functions $\{\psi_x: x \in N\}$ satisfying the following properties:
	\begin{enumerate}[(i)]
		\item $\sum_{x \in N} \psi_x \equiv \one$.
		\item For each $x \in X$, $\psi_x \in \mathcal{F} \cap C(X)$, 
		\begin{equation} \label{e:pu}
		  c \one_{B(x,r)}\le\psi_x \le \one_{B(x,2r)}, \quad \mbox{ and}, \quad \mathcal{E}(\psi_x,\psi_x) \le C \frac{V(x,r)}{\phi(r)}.
		\end{equation}
	\end{enumerate} 
\end{lem}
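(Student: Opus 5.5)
Fix $r>0$ and an $r$-net $N$. For each $x\in N$ we start from \hyperlink{cap}{$\operatorname{Cap}(\phi)_{\le}$} with $A_1=2$ (Remark \ref{r:covering}) at a suitably smaller scale. This gives cutoff functions $\varphi_x$ for $B(x,r)\subset B(x,2r)$ with $\mathcal{E}(\varphi_x,\varphi_x)\lesssim V(x,r)/\phi(r)$. (If $r\ge \diam(X)/A_2$, the space is bounded and we may take a trivial partition; the only issue is $\one\in\mathcal{F}$, which follows from regularity and compactness of bounded closed sets under \hyperlink{vd}{$\on{VD}$}.) These cutoffs need not be continuous. We therefore first regularize: by regularity, approximate $\varphi_x$ in $\mathcal{E}_1$ by functions in $\mathcal{F}\cap C_c(X)$. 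Truncating by $(0\vee\cdot)\wedge 1$ and multiplying by a continuous cutoff supported in $B(x,2r)$ then keeps the function in $\mathcal{F}\cap C(X)$, supported in $B(x,2r)$, and close to $1$ on $B(x,r/2)$. More simply, we compose with the normal contraction $t\mapsto ((2t-1/2)\vee 0)\wedge 1$ after approximating. In any case we obtain continuous $\eta_x\in\mathcal{F}$ with $\one_{B(x,r)}\le\eta_x\le\one_{B(x,2r)}$ (after adjusting radii and using that $N$ is also a net at comparable scales) and $\mathcal{E}(\eta_x,\eta_x)\lesssim V(x,r)/\phi(r)$. Normal contractions do not increase energy, so the energy bound survives. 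We need only finitely many such functions locally.

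Next set $S:=\sum_{y\in N}\eta_y$. By the net property $S\ge 1$ everywhere, and by \eqref{e:overlap} $S\le C$. We define $\psi_x:=\eta_x/S$. Then (i) holds, $\psi_x$ is continuous (the sum is locally finite), $\psi_x\le\one_{B(x,2r)}$, and $\psi_x\ge 1/C$ on $B(x,r)$. It remains to show $\psi_x\in\mathcal{F}$ with the stated energy bound. This is the nonlocal difficulty: we cannot use a chain rule, and $S$ itself is not in $\mathcal{F}$ when $N$ is infinite. We instead estimate directly. Write
\[
\psi_x(z)-\psi_x(w)=\frac{\eta_x(z)-\eta_x(w)}{S(z)}+\eta_x(w)\frac{S(w)-S(z)}{S(z)S(w)},
\]
so that $|\psi_x(z)-\psi_x(w)|\le|\eta_x(z)-\eta_x(w)|+\one_{B(x,2r)}(w)\,|S(z)-S(w)|$. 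We also have $|S(z)-S(w)|\le\sum_{y\in N_x}|\eta_y(z)-\eta_y(w)|$ when $w\in B(x,2r)$ and $d(z,w)<r$, where $N_x=N\cap B(x,6r)$ has boundedly many points by \eqref{e:overlap}. On pairs with $d(z,w)\ge r$ we use $|\psi_x(z)-\psi_x(w)|\le \one_{B(x,2r)}(z)+\one_{B(x,2r)}(w)$ together with \hyperlink{jphi}{$\on{J(\phi)_\le}$}, \eqref{e:vd-iter} and \eqref{e:scale}. The standard annular sum gives $\int_{B(x,2r)}\int_{d(z,w)\ge r}J(z,w)\,m(dw)\,m(dz)\lesssim V(x,r)/\phi(r)$. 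Summing up,
\[
\int\!\!\int(\psi_x(z)-\psi_x(w))^2J(z,w)\,m(dz)\,m(dw)\lesssim \sum_{y\in N_x}\mathcal{E}(\eta_y,\eta_y)+\frac{V(x,r)}{\phi(r)}\lesssim\frac{V(x,r)}{\phi(r)},
\]
using $V(y,r)\asymp V(x,r)$ for $y\in N_x$.

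This only shows $\psi_x\in\widehat{\mathcal{F}}$. Since Proposition \ref{p:domain} is proved later (and presumably uses this lemma), we must argue membership in $\mathcal{F}$ directly; this is the main obstacle. The plan is to write $\psi_x=\Phi(\eta_{y_1},\dots,\eta_{y_k})$ with $\{y_1,\dots,y_k\}=N_x$ and $\eta_x=\eta_{y_1}$. Here $\Phi(t)=t_1/\max(1,\sum t_i)$ is handled on $B(x,2r)$ where $S=\sum_{i}\eta_{y_i}$ (points of $N\setminus N_x$ have $\eta_y=0$ on $B(x,2r)$), and this identity holds globally because $\psi_x$ vanishes off $B(x,2r)$. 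The map $\Phi$ is Lipschitz on $[0,1]^k$ with $\Phi(0)=0$ and $|\Phi(t)-\Phi(s)|\le L\sum|t_i-s_i|$. By the standard fact that Dirichlet form domains are stable under such contractions (\cite[Theorem 1.4.2]{FOT}-type results, or directly via the semigroup characterization $\frac1t\langle u-P_tu,u\rangle$ written as $\frac{1}{2t}\iint(u(z)-u(w))^2p_t(z,dw)m(dz)+\dots$, which is dominated termwise by the corresponding expressions for the $\eta_{y_i}$), we get $\psi_x\in\mathcal{F}$. The energy bound then follows from the pointwise estimate above.
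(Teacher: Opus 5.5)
Your proposal follows the paper's proof: you normalize continuous capacity cutoffs by their locally finite sum, then get $\psi_x\in\mathcal{F}$ (without the circular appeal to Proposition~\ref{p:domain}) and the energy bound from the generalized contraction property for a Lipschitz map vanishing at the origin. The paper uses a two-variable McShane extension $T$ of $(a,b)\mapsto a/b$ applied to $(\wt{\psi}_x,\sum_{y\in N\cap B(x,4r)}\wt{\psi}_y)$, whereas your $k$-variable map $\Phi(t)=t_1/\max(1,\sum_i t_i)$ vanishes automatically where $\eta_x=0$ (the paper leaves $T(0,b)$ unprescribed for $0<b<1$) and already gives $\mathcal{E}(\psi_x,\psi_x)\lesssim V(x,r)/\phi(r)$, so your separate pointwise energy estimate is redundant; the one soft spot is your sketch for making the cutoffs continuous ($\mathcal{E}_1$-approximation gives no pointwise lower bound on $B(x,r/2)$, and multiplying by a continuous cutoff presupposes an energy bound for it), but the paper likewise just takes such continuous cutoffs as a standard consequence of regularity.
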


\begin{proof} 
	Let $N$ be any $r$-net, where $r>0$.
	By   \hyperlink{cap}{$\operatorname{Cap(\phi)_{\le}}$}, Remark \ref{r:covering}, and the regularity of the Dirichlet form,  for each $x \in N$, there exists
	 $\wt{\psi}_x \in \mathcal{F} \cap C(X)$ be such that 
	 \[
	 \wt{\psi}_x \in \mathcal{F},  \quad \one_{B(x,r)} \le \wt{\psi}_x \le \one_{B(x,2r)}, \quad \mathcal{E}(\wt{\psi}_x,\wt{\psi}_x) \lesssim \frac{V(x,r)}{\phi(r)}.
	 \]
	We claim that the family  $\{\psi_x : x \in N\}$, defined by 
	\begin{equation} \label{e:pu0}
	\psi_x:= \frac{\wt{\psi}_{x}}{\sum_{y \in N} \wt{\psi}_y}, \mbox{for all $x \in N$,}
	\end{equation}
		satisfies the desired properties. By the maximality of the net and the bounded overlap property \eqref{e:overlap}, there exists $c \in (0,1)$ such that 
		\begin{equation} \label{e:pu1}
			\one \le \sum_{x \in N} \wt{\psi}_x \le c^{-1} \one.
		\end{equation}
		Using  McSchane's extension theorem \cite[Theorem 6.2]{Hei}, we pick a $\sqrt{2}$-Lipschitz function $T:\mathbb{R}^2 \to \mathbb{R}$ such that 
		\[
	T(a,b)= \begin{cases}
			a/b & \mbox{if $a \in [0,1], b \in [1,\infty)$,}\\
			0 & \mbox{if $a=0,b=0$.}
		\end{cases}
		\]
	Hence, combining \eqref{e:pu0} and \eqref{e:pu1} with the generalized contraction property of Dirichlet forms (\cite[Theorem A.2]{KS} or \cite[Exercise I.3.5, Proof of Proposition I.3.3.1]{BH91}), we obtain that
		\[
			\psi_x=  T\left(\wt{\psi}_x, \sum_{y \in N \cap B(x,4r)} \wt{\psi}_y \right) \in \mathcal{F},  
		\]
		and 
		\begin{align*}
			\mathcal{E}(\psi_x,\psi_x) &\le 2 \left(\mathcal{E}(\wt{\psi}_x,\wt{\psi}_x) + \mathcal{E} \left(\sum_{y \in N \cap B(x,4r)} \wt{\psi}_y ,\sum_{y \in N \cap B(x,4r)} \wt{\psi}_y  \right)  \right)  \\
			& \le 2 \left(\mathcal{E}(\wt{\psi}_x,\wt{\psi}_x) + \#(N \cap B(x,4r)) \sum_{y \in N \cap B(x,4r} \mathcal{E}(\wt{\psi}_y,\wt{\psi}_y) ) \right) \\
			&\stackrel{\eqref{e:overlap}, \eqref{e:pu}}{\lesssim} \sum_{y \in N \cap B(x,4r} \frac{V(y,r)}{\phi(r)} \stackrel{\eqref{e:vd-iter}}{\lesssim}  \frac{V(x,r)}{\phi(r)}.
		\end{align*}
\end{proof}

\begin{proof}[Proof of Proposition \ref{p:domain}.]
Let $x_0 \in X, r>0$ and $f \in \wh{\mathcal{F}}$. Let $N$ be any $r$-net, where $r>0$. Let $\{\psi_x: x \in N\}$ denote the partion of unity satisfying the conclusion of Lemma \ref{l:partition}. We define for each $n \in \mathbb{N}$, 
\[
f_{r,n}(\cdot):= \sum_{y \in B(x_0,n) \cap N} f_{B(y,r)} \psi_y(\cdot),  
\]
and set $f_r(\cdot):= \sum_{y \in N} f_{B(y,r)} \psi_y(\cdot), F_r(\cdot):= \sum_{y \in N} \abs{f_{B(y,r)}} \psi_y(\cdot)$.

 First, we note that by the bounded overlap property \eqref{e:overlap} and Lemma \ref{l:partition}, we have
 \begin{align} \label{e:dom1}
\int \abs{F_r}^2 \,dm  &\stackrel{\eqref{e:overlap}}{\lesssim}  \int \sum_{y \in N} \abs{f_{B(y,r)}}^2 \psi_y^2 \,dm    \stackrel{\eqref{e:pu}}{\lesssim} \sum_{y \in N}\abs{f_{B(y,r)}}^2 V(y,2r)  \stackrel{\on{VD}}{\lesssim} \sum_{y \in N}\abs{f_{B(y,r)}}^2 V(y,r) \nonumber \\
&\lesssim \sum_{y \in N} \int_{B(y,r)} f^2\,dm  \stackrel{\eqref{e:overlap}}{\lesssim} \int f^2\,dm <\infty. \quad \mbox{(by Jensen's inequality)}
 \end{align}
 Therefore by the dominated convergence theorem $f_{r,n} \to f$ in $L^2(X,m)$. Since $f_{r,n}$ is a finite linear combination of functions in $\mathcal{F}$, we have that $f_{r,n} \in \mathcal{F}$ for all $f \in \wh{\mathcal{F}}$ and $n \in \mathbb{N}$. 
 
 Next, let us estimate the energy $\mathcal{E}(f_{r,n},f_{r,n})$.
 By the bounded overlap property \eqref{e:overlap}, and Cauchy-Schwarz inequality, we have  for all $x,y \in X, r >0, f \in \wh{\mathcal{F}}$ and $n \in \mathbb{N}$,
 \begin{align} \label{e:dom1b}
 	(f_{r,n}(x)- f_{r,n}(y))^2 &= \left( \sum_{w \in N \cap B(x_0,n)} f_{B(w,r)} (\psi_w(x)-\psi_w(y)) \right)^2 \nonumber \\
 	&\lesssim \sum_{w \in N \cap B(x_0,n)} f_{B(w,r)}^2 (\psi_w(x)-\psi_w(y))^2.
 \end{align}
 Therefore for all $n \in \mathbb{N}, r>0$ and $f \in \wh{\mathcal{F}}$, we have
 \begin{align*}
 	\mathcal{E}(f_{r,n},f_{r,n}) &=  \int (f_{r,n}(x)- f_{r,n}(y))^2\, J(dx,dy)\\
 	&\le \sum_{w \in N \cap B(x_0,n)} f_{B(w,r)}^2  \int (\psi_w(x)-\psi_w(y))^2 \,J(dx,dy) \le \sum_{w \in N} f_{B(w,r)}^2 \mathcal{E}(\psi_w,\psi_w)\\
 	&\stackrel{\eqref{e:pu}}{\lesssim} \sum_{w \in N} f_{B(w,r)}^2 \frac{V(w,r)}{\phi(r)} \stackrel{\eqref{e:dom1}}{\lesssim} \frac{1}{\phi(r)} \int_X f^2\,dm.
 \end{align*}
 In particular,  $\liminf_{n \to \infty } 	\mathcal{E}(f_{r,n},f_{r,n})< \infty$. Combining this with $f_{r,n} \to f$ in $L^2(X,m)$ and using the fact that $(\mathcal{E},\mathcal{F})$ is a closed form, we conclude that 
 $f_r \in \mathcal{F}$ and 
 \begin{equation} \label{e:dom1a}
 \mathcal{E}(f_r,f_r) \le \liminf_{n \to \infty } 	\mathcal{E}(f_{r,n},f_{r,n}) \lesssim \frac{1}{\phi(r)} \int_X f^2\,dm < \infty.
 \end{equation}
Here, the conclusions $f_{r,n} \in \mathcal{F}$ and \eqref{e:dom1a} follow from the standard equivalence between closedness of the quadratic form $(\mathcal{E},\mathcal{F})$ on $L^2(X,m)$ and lower semicontinuity on $L^2(X,m)$ of the functional
\[
g \mapsto
\begin{cases}
	\mathcal{E}(g,g), & \text{if } g \in \mathcal{F},\\
	\infty, & \text{otherwise},
\end{cases}
\]
see \cite[p.~372, \textsection 1(c)]{Mos}.

Similar to the argument above showing that $f_r \in \mathcal{F}$, to prove that
$f \in \mathcal{F}$ it suffices to show that
\begin{equation} \label{e:dom2}
	\lim_{r \downarrow 0} \int_X \abs{f - f_r}^2 \, dm = 0 . 
\end{equation}

 and 
  \begin{equation} \label{e:dom3}
 	\liminf_{r \downarrow 0} \mathcal{E}(f_r,f_r) < \infty.
 \end{equation}

 Next, we show \eqref{e:dom2}.
 To this end, note that for any $r>0$,
 \begin{align} \label{e:dom4}
 	\int \abs{f-f_r}^2\,dm&= \int \abs{f(x)-\sum_{y \in N} f_{B(y,r)} \psi_y(x)}^2\, m(dx)=  \int \abs{\sum_{y \in N} (f(x)-f_{B(y,r)}) \psi_y(x)}^2\, m(dx) \nonumber \\
 	&{\lesssim} \sum_{y \in N} \int  \abs{(f(x)-f_{B(y,r)}) \psi_y(x)}^2\, m(dx) \quad \mbox{(by \eqref{e:overlap}, \eqref{e:pu}, Cauchy--Schwarz)} \nonumber \\
 	&\lesssim \sum_{y \in N} \int_{B(y,2r)}  \abs{(f(x)-f_{B(y,r)})}^2\, m(dx) ,\quad \mbox{(since $\psi_y \le \one_{B(y,2r)}$)} \nonumber \\
 	 	&\lesssim \sum_{y \in N} \frac{1}{V(y,r)} \int\int  \one_{B(y,2r)}(x_1)\one_{B(y,r)}(x_2) \abs{(f(x_1)-f(x_2))}^2\, m(dx_1)\, m(dx_2)\nonumber \\
 	 	&\quad\quad \mbox{(by Jensen's inequality)}\nonumber \\
 	 	&\lesssim \sum_{y \in N}   \int\int  \one_{B(y,2r)}(x_1)\one_{B(y,r)}(x_2) \frac{\abs{(f(x_1)-f(x_2))}^2}{V(x_1,r)}\, m(dx_1)\, m(dx_2) \quad \mbox{(by \eqref{e:vd-iter})} 
  \end{align}
  By the bounded overlap property \eqref{e:overlap}, we have 
  \begin{equation} \label{e:dom5}
  	\sum_{y \in N} \one_{B(y,2r)}(x_1)\one_{B(y,r)}(x_2) \lesssim \one_{B(x_2,3r)}(x_1), \quad \mbox{for all $x_1,x_2 \in X$.}
  \end{equation}
  If $d(x_1,x_2) < 3r$, then by \eqref{e:scale}, \hyperlink{jphi}{$\on{J(\phi)}$} and \eqref{e:vd-iter}, we have 
  \begin{equation} \label{e:dom6}
   \frac{1}{V(x_1,r)} \lesssim    \frac{1}{V(x_1,d(x_1,x_2))} \lesssim \phi(r) \frac{1}{V(x_1,d(x_1,x_2))\phi(d(x_1,x_2))} \lesssim \phi(r) J(x_1,x_2).
  \end{equation}
  Combining \eqref{e:dom4}, \eqref{e:dom5}, and \eqref{e:dom6}, we have 
  \begin{align} \label{e:dom7}
  	\int \abs{f-f_r}^2\,dm  &\stackrel{\eqref{e:dom4}, \eqref{e:dom5}}{\lesssim}  \int\int   \one_{B(x_1,3r)}(x_2) \frac{\abs{(f(x_1)-f(x_2))}^2}{V(x_1,r)}\, m(dx_1)\, m(dx_2) \nonumber\\
  	&\stackrel{\eqref{e:dom6}}{\lesssim} \phi(r) \int\int  \abs{(f(x_1)-f(x_2))}^2 J(x_1,x_2)\, m(dx_1)\,m(dx_2)
  \end{align}
  By $f \in \wh{\mathcal{F}}$, \eqref{e:dom7}, and \eqref{e:scale}, we obtain \eqref{e:dom2}.

  Since the bound \eqref{e:dom1a} is not enough to obtain \eqref{e:dom3}, we need an improved bound on $$ \mathcal{E}(f_r,f_r)= \int (f_r(x)-f_r(y))^2 \,J(dx,dy).$$  
  We estimate the above integral by considering two cases, according to whether $d(x,y)<18r$. To this end, we need two estimates on $(f_r(x)-f_r(y))^2$. The first estimate is obtained by the same argument as in \eqref{e:dom1b},
  yielding the following estimate for all $x,y \in X$, $r>0$,
  $f \in \widetilde{\mathcal{F}}$, and $K \in \mathbb{R}$:
  \begin{align} \label{e:dom8}
  	(f_r(x)-f_r(y))^2 &= \left(\sum_{w \in N} (f_{B(w,r)}-K) (\psi_w(x)-\psi_w(y))\right)^2 \nonumber \\
  	 &\lesssim  \sum_{w \in N} (f_{B(w,r)}-K)^2 (\psi_w(x)-\psi_w(y))^2.
  \end{align}
The second estimate for $(f_r(x) - f_r(y))^2$ follows from the Cauchy--Schwarz
inequality and \eqref{e:pu}, yielding
  \begin{align} \label{e:dom9}
	(f_r(x)-f_r(y))^2 &= \left( \sum_{w_1,w_2 \in N}\left(f_{B(w_1,r)}-  f_{B(w_2,r)}\right)\psi_{w_1}(x) \psi_{w_2}(y)\right)^2 \nonumber \\
	&\le \#(N \cap B(x,2r)) \cdot \#(N \cap B(y,2r)) \sum_{\substack{w_1 \in N \cap B(x,2r),  \\
		w_2 \in N \cap B(y,2r)}}  \left(f_{B(w_1,r)}-  f_{B(w_2,r)}\right)^2 \nonumber \\
	&\stackrel{\eqref{e:overlap}}{\lesssim } \sum_{\substack{w_1 \in N \cap B(x,2r), \\
			w_2 \in N \cap B(y,2r)}}  \left(f_{B(w_1,r)}-  f_{B(w_2,r)}\right)^2 \nonumber \\
		&\lesssim\int_{B(x,3r)} \int_{B(y,3r)}  \frac{(f(z_1)-f(z_2))^2 }{V(z_1,r)V(z_2,r)}  \,m(dz_1) \,m(dz_2),
  \end{align}
  where the last line follows from by \eqref{e:vd-iter}, Jensen's inequality and $B(w_1,r) \subset B(x,3r), B(w_2,r) \subset B(y,3r)$ for  all $w_1 \in N \cap B(x,2r),  
  w_2 \in N \cap B(y,2r)$.
  Note that, if $d(x,y) \ge 18r$, then for any $z_1 \in B(x,3r), z_2 \in B(y,3r)$, we have 
  \[
\frac{1}{4}  d(z_1,z_2) \ge \frac{1}{4} (d(x,y) - 6r) \ge 3r, 
  \]
  and hence by Lemma \ref{l:ujs} and \eqref{e:vd-iter}, we have 
  \begin{equation} \label{e:dom10}
  \frac{1}{V(z_1,r) V(z_2,r)}	\int_{B(z_1,3r)} \int_{B(z_2,3r)} J(w_1,w_2)  \,m(dw_1)\,m(dw_2)\lesssim  C J(z_1,z_2).
  \end{equation}
Using the estimate \eqref{e:dom9} and interchanging the order of integration
via Fubini's theorem, we obtain
  \begin{align}\label{e:dom11}
  \MoveEqLeft{	\int \int (f_r(x)-f_r(y))^2 \one_{\{d(x,y) \ge 18r\}} J(x,y) \,m(dx)\,m(dy)} \nonumber \\
  &\stackrel{\eqref{e:dom9}}{\lesssim} \int \int  \int_{B(z_2,3r)} \int_{B(z_1,3r)} \one_{\{d(x,y) \ge 18r\}}  \frac{(f(z_1)-f(z_2))^2 }{V(z_1,r)V(z_2,r)} J(x,y)  \,m(dx)\,m(dy)\,m(dz_1)\,m(dz_2) \nonumber \\
  &\stackrel{\eqref{e:dom10}}{\lesssim} \int \int (f(z_1)-f(z_2))^2 J(z_1,z_2)\, m(dz_1)\,m(dz_2).
  \end{align}
To estimate the integral under $d(x,y) < 18r$, we use that 
\[
\one_{B(x,18r)}(y) \le \sum_{w \in N} \one_{B(w,r)}(x)\one_{B(w,19r)}(y) \le \sum_{w \in N} \one_{B(w,19r)}(x)\one_{B(w,19r)}(y), 
\]
for all $x,y \in X$ such that $d(x,y) < 18r$. Furthermore, for such $x,y \in X$, using \eqref{e:pu}, we observe that
\[
\psi_w(x)-\psi_w(y)=0, \quad \mbox{for all $w \in  N\cap B(x,20r)^c$.}
\]
Combining the two displays above, and choosing $K = f_{B(x,r)}$
in \eqref{e:dom8}, we obtain that for all $x,y \in X$ with
$d(x,y) < 18r$,
\begin{align} \label{e:dom12}
	\MoveEqLeft{(f_r(x)-f_r(y))^2}  &\le \sum_{w \in N \cap B(x,20r) \cap B(y,20r)}  (f_{B(w,r)}-f_{B(x,r)})^2 (\psi_w(x)-\psi_w(y))^2.
\end{align}
By Jensen's inequality and \eqref{e:vd-iter}, for any $w \in N$
and any $x \in B(w,20r)$, we have
\begin{align} \label{e:dom13}
\MoveEqLeft{\abs{f_{B(w,r)}-f_{B(x,r)}}^2 } \nonumber\\&\lesssim \frac{1}{m(B(w,r))^2} \int_{B(w,21 r)} \int_{B(w,21 r)} (f(z_1)-f(z_2))^2 \,m(dz_1)\,m(dz_2) \nonumber \\
	&\stackrel{\eqref{e:scale}}{\lesssim} \frac{\phi(r)}{m(B(w,r))} \int_{B(w,21 r)} \int_{B(w,21 r)} \frac{(f(z_1)-f(z_2))^2}{V(z_1,d(z_1,z_2)) \phi(d(z_1,z_2))} \,m(dz_1)\,m(dz_2) \nonumber \\
	&\lesssim \frac{\phi(r)}{m(B(w,r))} \int_{B(w,21 r)} \int_{B(w,21 r)} (f(z_1)-f(z_2))^2 J(z_1,z_2) \,m(dz_1)\,m(dz_2) 
\end{align}
Hence by \eqref{e:dom12}, \eqref{e:dom13}, we obtain
\begin{align} \label{e:dom14}
	\MoveEqLeft{	\int \int (f_r(x)-f_r(y))^2 \one_{\{d(x,y) < 18r\}} J(x,y) \,m(dx)\,m(dy)} \nonumber \\
	&\le \sum_{w \in N} \frac{\phi(r)}{m(B(w,r))} \int_{B(w,21 r)} \int_{B(w,21 r)} (f(z_1)-f(z_2))^2 J(z_1,z_2) \,m(dz_1)\,m(dz_2)  \mathcal{E}(\psi_w,\psi_w) \nonumber \\
	&\stackrel{\eqref{e:pu}}{\lesssim} \sum_{w \in N}  \int_{B(w,21 r)} \int_{B(w,21 r)} (f(z_1)-f(z_2))^2 J(z_1,z_2) \,m(dz_1)\,m(dz_2) \nonumber \\
	&\lesssim  \int_{X} \int_{X} (f(z_1)-f(z_2))^2 J(z_1,z_2) \,m(dz_1)\,m(dz_2) \quad \mbox{(by \eqref{e:overlap}).}
\end{align}
Combining \eqref{e:dom11} and \eqref{e:dom14}, we obtain \eqref{e:dom3} which concludes the proof that $f \in \mathcal{F}$ and hence $\mathcal{F}= \wh{\mathcal{F}}$.
  \end{proof}
 \subsection{Truncated Dirichlet form}
A useful and well-known technique for handling difficulties arising from non-local
Dirichlet forms is to truncate the jump kernel so that it behaves more like a local
Dirichlet form. For a probabilistic interpretation of the resulting decomposition
of the jump kernel in terms of the Meyer decomposition, we refer the reader to
\cite[\textsection~3]{BGK}.

Let \((\mathcal{E},\mathcal{F})\) be a pure jump Dirichlet form with jump measure
\(J(dx,dy)=J(x,y)\,m(dx)\,m(dy)\). For \(\rho>0\), we define the \(\rho\)-truncated
Dirichlet form \((\mathcal{E}^{(\rho)},\mathcal{F})\) by
\begin{equation} \label{e:def-trun-df}
	\mathcal{E}^{(\rho)}(f,g)
	:= \int (f(x)-f(y))(g(x)-g(y))\, J^{(\rho)}(dx,dy),
	\qquad f,g \in \mathcal{F},
\end{equation}
where
\begin{equation} \label{e:def-truncate}
	J^{(\rho)}(x,y)
	:= \one_{\{d(x,y)\le \rho\}} J(x,y),
	\qquad
	J^{(\rho)}(dx,dy)
	:= \one_{\{d(x,y)\le \rho\}} J(dx,dy).
\end{equation}

By \cite[Lemma~2.1 and (2.3)]{CKW-hke} and \hyperlink{jphi}{$\mathrm{J}(\phi)_{\le}$},
there exists a constant \(C>0\) such that for all \(\rho>0\) and all \(f \in \mathcal{F}\),
\begin{equation} \label{e:trunc-est-df}
	0 \le \mathcal{E}(f,f)-\mathcal{E}^{(\rho)}(f,f)
	\le \frac{C}{\phi(\rho)}\int_X f^2\,dm.
\end{equation}

Denote the corresponding energy measure by
\(\Gamma^{(\rho)}(f,f)(dx)=\Gamma^{(\rho)}(f,f)(x)\,m(dx)\), where
\[
\Gamma^{(\rho)}(f,f)(x)
:= \int_X (f(y)-f(x))^2\, J^{(\rho)}(x,y)\,m(dy).
\]
By \cite[Lemma~2.1]{CKW-hke}, there exists \(C>0\) such that for any
\(f \in \mathcal{F} \cap L^\infty(X,m)\),
\begin{equation} \label{e:trunc-est-em}
	0 \le \Gamma(f,f)(x)-\Gamma^{(\rho)}(f,f)(x)
	\le \frac{C}{\phi(\rho)} \norm{f}_\infty^2,
	\quad \text{for } m\text{-a.e. } x \in X.
\end{equation}

Similarly, by \cite[Lemma~2.1]{CKW-hke}, there exists \(C>0\) such that for any
\(x \in X\) and \(r>0\),
\begin{equation} \label{e:jump-tail}
	\int_{B(x,r)^c} J(x,y)\,m(dy) \le \frac{C}{\phi(r)}.
\end{equation}

\subsection{Estimates on the equilibrium potential}

In this subsection, we establish a cutoff Sobolev-type inequality for functions with zero boundary values; see Lemma \ref{l:zerobdy-cs}. The cutoff function is chosen to be an equilibrium potential.

Let $(\mathcal{E},\mathcal{F})$ be a regular Dirichlet form on $L^2(X,m)$, where $(X,d,m)$ is a metric measure space.  The \textbf{$1$-capacity} of a set $A \subset X$ is defined by
\begin{equation} \label{e:defCap1}
	\Cap_1(A) := \inf \bigl\{ \mathcal{E}_{1}(f,f) \bigm\vert \textrm{$f \in \mathcal{F}$, $f \geq 1$ $m$-a.e.\ on a neighborhood of $A$} \bigr\},
\end{equation}
where $\mathcal{E}_{1}:=\mathcal{E}+\langle\cdot,\cdot\rangle_{L^{2}(X,m)}$ as defined before.
	A subset $N$ of $X$ is said to be \emph{$\mathcal{E}$-polar} if $\Cap_{1}(N)=0$. If $\Cap_1(A)>0$, then $A \subset X$ is said to be non-$\mathcal{E}$-polar or \emph{non-polar}.
	For $A \subset X$ and a statement $\mathcal{S}(x)$ on $x \in A$, we say that
$\mathcal{S}$ holds \emph{$\mathcal{E}$-quasi-everywhere on $A$} (\emph{$\mathcal{E}$-q.e.\ on $A$} for short),
or $\mathcal{S}(x)$ holds for \emph{$\mathcal{E}$-quasi-every} $x \in A$ (\emph{$\mathcal{E}$-q.e.}\ $x \in A$ for short),
if $\mathcal{S}(x)$ holds for any $x \in A \setminus N$ for some $\mathcal{E}$-polar $N \subset X$.
When $A=X$, we often write just ``\emph{$\mathcal{E}$-q.e.}''\ instead of ``$\mathcal{E}$-q.e.\ on $X$''.
A non-decreasing sequence $\{F_{k}\}_{k\in\mathbb{N}}$ of closed subsets of $X$
is called an \emph{$\mathcal{E}$-nest} if $\lim_{k\to\infty}\Capa_1(K \setminus F_{k})=0$
for any compact subset $K$ of $X$.
A function $f \colon D \setminus N \to [-\infty,\infty]$, defined $\mathcal{E}$-q.e.\ on
an open subset $D$ of $X$ for some $\mathcal{E}$-polar $N \subset X$,
is said to be \emph{$\mathcal{E}$-quasi-continuous on $D$} if there exists an $\mathcal{E}$-nest
$\{F_{k}\}_{k\in\mathbb{N}}$ such that $F_{k} \cap N = \emptyset$ and $f|_{D \cap F_{k}}$
is an $\mathbb{R}$-valued continuous function on $D \cap F_{k}$ for any $k \in \mathbb{N}$
(again, when $D=X$, we often omit ``on $X$''). For each $f \in \mathcal{F}_e$,
an $\mathcal{E}$-quasi-continuous $m$-version $\widetilde{f}$ of $f$ exists by \cite[Theorem 2.1.7]{FOT}
  and is unique $\mathcal{E}$-q.e.\ by \cite[Lemma 2.1.4]{FOT}.

  Let $A \Subset \Omega \subset X$, where $\Omega$ is a precompact open subset such that $\Omega^c$ is non-polar.	By \cite[the $0$-order version of Theorem 2.1.5-(i),(ii)]{FOT}, we have
 \begin{equation}\label{e:defCapD-alt}
 	\Cap(A, \Omega^c) = \inf \bigl\{ \mathcal{E}(f,f) \bigm\vert \textrm{$f \in \mathcal{F}$, $\wt{f} \ge 1$ $\mathcal{E}$-q.e.\ on $A$, $\wt{f} =0$ $\mathcal{E}$-q.e.\ on $\Omega^c$} \bigr\}
 \end{equation}
 and there exists a unique function $e_{A,\Omega} \in \mathcal{F}$,
 called the \textbf{equilibrium potential} of $A$ in $\Omega$, that attains the infimum in \eqref{e:defCapD-alt} and satisfies \begin{equation} \label{e:eq-pot}
 \mbox{	$\wt{e}_{A,\Omega} \equiv 1$  $\mathcal{E}$-q.e.\ on $A$ \quad and \quad $\wt{e}_{A,\Omega} \equiv 0$  $\mathcal{E}$-q.e.\ on $\Omega^c$.}
 \end{equation}
The following Lemma is a non-local analogue of \cite[Lemma 3.1]{Eri} and provides estimates of energy measure of equilibrium potential. The proof of the analogous lemma in \cite{Eri} relies on strong locality. 
\begin{lemma} \label{l:logcacc} 
	Let $(\mathcal{E},\mathcal{F})$ be a regular, pure jump Dirichlet form on $L^2(X,m)$ with jump measure $J(dx,dy)= J(x,y)\,m(dx)\,m(dy)$. 
	Let $\psi$ be the equilibrium potential for $K \subset \Omega$, where $\Omega$ is a non-polar, precompact  open subset and $K$ is compact. Let $g \in \mathcal{F} \cap C(X)$ be such that $0 \le g \le 1$ and $g \equiv 1$ for some $A \subset X$.  Suppose if one of the following conditions hold:
	\begin{enumerate}[(a)]
		\item $g \equiv 0$ on some neighborhood of $\Omega^c$, or
		\item $g \equiv 0$ on some neighborhood of $K$.
	\end{enumerate}
	Then  $\Gamma(\psi,\psi)(A)	\le 2  \Gamma(g,g)(A)$.
\end{lemma}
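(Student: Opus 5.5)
The plan is to use only the variational (energy-minimizing) characterization of the equilibrium potential $\psi=e_{K,\Omega}$ from \eqref{e:defCapD-alt}, together with a pointwise comparison of the integrands in $\mathcal{E}(f,f)=\int\int (f(x)-f(y))^2J(x,y)\,m(dx)\,m(dy)$. Strong locality, which is what \cite[Lemma 3.1]{Eri} relies on, is not available here.

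First we reduce case (b) to case (a). The function $1-\psi$ is, at least on pairs of points, the equilibrium-type minimizer for the complementary problem, and $1-g$ vanishes near $K$ exactly when $g$ does. In both cases we build an admissible competitor:
\begin{itemize}
\item In case (a) we take $h:=\psi\vee g$. Since $g$ is continuous with $0\le g\le 1$ and $g\equiv 0$ near $\Omega^c$, we have $h\in\mathcal{F}$ (Markov property and lattice stability), $\wt h\ge 1$ q.e. on $K$, and $\wt h=0$ q.e. on $\Omega^c$. Minimality then gives $\mathcal{E}(\psi,\psi)\le \mathcal{E}(h,h)$.
\item In case (b) we take $h:=\psi\wedge(1-g)$. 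This equals $\psi\ge 1$ q.e. on $K$ because $g=0$ near $K$, and it still vanishes on $\Omega^c$. Again $\mathcal{E}(\psi,\psi)\le\mathcal{E}(h,h)$.
\end{itemize}
In both cases $0\le\psi\le1$, so $h\equiv 1$ on $A$ in case (a) and $h\equiv 0$ on $A$ in case (b). Moreover $h$ differs from $\psi$ only on the set $U:=\{g>\psi\}$ (respectively $U:=\{1-g<\psi\}$), and this set contains $A\cap\{\psi<1\}$ (respectively $A\cap\{\psi>0\}$).

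Next we compare the two energies pair by pair, splitting $X\times X$ according to whether $x,y$ lie in $U$ or in $U^c$.
\begin{itemize}
\item On $U^c\times U^c$ the integrands of $\mathcal{E}(h,h)$ and $\mathcal{E}(\psi,\psi)$ coincide, so these contributions cancel. This cancellation of all pairs far from where the modification occurs is the key point.
\item On $U\times U$ we have $h=g$ (respectively $h=1-g$), so $(h(x)-h(y))^2=(g(x)-g(y))^2$.
\item On mixed pairs $x\in U$, $y\notin U$, the difference $h(x)-h(y)$ lies between $g(x)-g(y)$ and $\psi(x)-\psi(y)$, and it has the sign that makes $|h(x)-h(y)|\le|g(x)-g(y)|$ whenever $|h(x)-h(y)|\ge|\psi(x)-\psi(y)|$.
\end{itemize}
After the cancellation, the minimality inequality becomes a comparison of the $\psi$-energy and the $g$-energy restricted to pairs $(x,y)$ with at least one point in $U$. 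Using symmetry of $J$, the pairs with exactly one point in $U$ are counted twice in $\mathcal{E}$ but once in $\Gamma(\cdot,\cdot)(U)$. This double counting should produce the constant $2$, giving a bound of the form $\Gamma(\psi,\psi)(U)\le 2\Gamma(g,g)(U)$.

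The main obstacle is passing from $U$ to the prescribed set $A$. The natural statement obtained above is localized to the set $U$ where the competitor differs from $\psi$, whereas the lemma is stated for $A\subset\{g=1\}$, and in general $A\not\subset U$ because $\psi$ may equal $1$ on parts of $A$. To handle this, I would not use $g$ itself but the family of competitors $g_t:=(g-t)^+/(1-t)$ for $t\uparrow 1$, or equivalently replace $g$ by $g\wedge s$ pieces. The aim is to arrange that the modified set shrinks to $\{g=1\}$ while the integrand comparison only ever charges pairs with a point where $g$ is close to $1$. The remaining part of $A$ is where $\psi=1$ (respectively $\psi=0$), and there I would check directly that $\Gamma(\psi,\psi)(x)=\int(1-\psi(y))^2J(x,y)\,m(dy)$ is dominated by $\Gamma(g,g)(x)=\int(1-g(y))^2J(x,y)\,m(dy)$. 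For this I would try to show $1-\psi\le 1-g$ off $U$ and reuse the mixed-pair estimate. Verifying that these two pieces combine with exactly the constant $2$, rather than a larger constant, is the step I expect to require the most care.
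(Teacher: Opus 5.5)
\textbf{There is a genuine gap, and it cannot be closed, because the statement itself is false.}

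Your competitors $\psi\vee g$ and $\psi\wedge(1-g)$ are exactly the paper's, and the admissibility checks are essentially fine. For $\psi\wedge(1-g)$, membership in $\mathcal{F}$ needs a word, e.g.\ $\psi\wedge(1-g)=\psi-(\psi+g-1)^+$. The failing step is the one you flag yourself: passing from a bound charged to $U=\{g>\psi\}$ to one charged to $A$. The inequality $\Gamma(\psi,\psi)(A)\le 2\Gamma(g,g)(A)$ fails under the stated hypotheses, indeed with any constant.

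\textbf{Counterexample.} Take $X=\{0,1,2,3,4\}$ with $d(i,j)=|i-j|$, counting measure, $J(i,j)=1$ if $|i-j|=1$ and $J(i,j)=0$ otherwise. Let $K=\{0\}$ and $\Omega=\{0,1,2,3\}$, so that $\psi(i)=1-i/4$.
\begin{itemize}
\item Case (a): take $g=\one_{\{0,1,2,3\}}$ (admissible, since $\{4\}=\Omega^c$ is open) and $A=\{2\}$. Then $\Gamma(g,g)(A)=0$, but $\Gamma(\psi,\psi)(A)=1/16+1/16=1/8$.
\item Case (b): $g=\one_{\{1,2,3,4\}}$ with the same $A$ gives the same failure.
\end{itemize}
This is not an artifact of the missing jump kernel bounds. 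Take $J(x,y)=|x-y|^{-d-\alpha}$ on $\mathbb{R}^d$ with $d>\alpha$, $K=\overline{B(0,1)}$, $\Omega=B(0,R)$, $g=1$ on $B(0,R/2)$, and $A=B(x_1,1/2)$ with $|x_1|=2$. Then $\Gamma(g,g)(A)\lesssim R^{-\alpha}$. On the other hand, $\Gamma(\psi,\psi)(A)\ge\int_A(1-\psi(x))^2\int_K J(x,y)\,m(dy)\,m(dx)$ stays bounded below as $R\to\infty$. This is because $\psi$ is dominated by the hitting probability of $K$ for the transient stable process, which is bounded away from $1$ on $A$.

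\textbf{Why your proposed fix fails.} The inequality $1-\psi\le 1-g$ holds only off $U$. The mass of $\Gamma(\psi,\psi)(x)$ for $x\in A$ comes from $y\in U$ (in the example, $y=1,3$), where $\psi$ varies and $g$ does not.

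\textbf{Your interim bound is also unjustified.} The bound $\Gamma(\psi,\psi)(U)\le 2\Gamma(g,g)(U)$ does not follow from the bookkeeping you describe. On mixed pairs with $\psi(x)<g(x)<\psi(y)$, cancellation leaves only $(\psi(x)-\psi(y))^2-(h(x)-h(y))^2$ on the left. Moreover, $(h(x)-h(y))^2=(\psi(y)-g(x))^2$ is not controlled by $(g(x)-g(y))^2$; take $g(y)=g(x)$.

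\textbf{What the competitor argument actually proves.} Split along $A$ instead of $U$, as the paper does. Since $h\equiv 1$ on $A$ in case (a) (and $h\equiv 0$ on $A$ in case (b)), the integrand $(h(x)-h(y))^2$
\begin{itemize}
\item vanishes on $A\times A$,
\item is at most $(g(x)-g(y))^2$ on $A\times A^c$,
\item is at most $(\psi(x)-\psi(y))^2+(g(x)-g(y))^2$ on $A^c\times A^c$.
\end{itemize}
Cancel the finite term $\int_{A^c\times A^c}(\psi(x)-\psi(y))^2\,J(dx,dy)$ in $\mathcal{E}(\psi,\psi)\le\mathcal{E}(h,h)$. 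This gives
\begin{align*}
&\int_{A\times A}(\psi(x)-\psi(y))^2\,J(dx,dy)+2\int_{A\times A^c}(\psi(x)-\psi(y))^2\,J(dx,dy)\\
&\qquad\le 2\int_{A\times A^c}(g(x)-g(y))^2\,J(dx,dy)+\int_{A^c\times A^c}(g(x)-g(y))^2\,J(dx,dy)=\mathcal{E}(g,g),
\end{align*}
so $\Gamma(\psi,\psi)(A)\le\mathcal{E}(g,g)$.

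\textbf{Where the paper's proof goes wrong.} The term $\int_{A^c\times A^c}(g(x)-g(y))^2\,J(dx,dy)$ is not part of $\Gamma(g,g)(A)=\int_{A\times A^c}(g(x)-g(y))^2\,J(dx,dy)$. The paper's final inequality silently discards it, so the paper's proof has the same gap. The example shows that neither this localized claim nor the localized bound in Lemma~\ref{l:zerobdy-cs} can be rescued. Feeding the correct global bound $\Gamma(\psi,\psi)(A)\le\mathcal{E}(g,g)$ into the level-set argument of Lemma~\ref{l:zerobdy-cs} yields $\int_X f^2\,d\Gamma(\psi,\psi)\le 12\,\mathcal{E}(f,f)$. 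That global form is what this method can actually deliver.
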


\begin{proof}
	Let us first assume condition (a). By a quasi-continuous modification if necessary, we assume that $\psi$ is quasi-continuous.    Define $u:= \psi \vee g$. Then by \cite[Therem 1.4.2-(i)]{FOT} and \eqref{e:eq-pot}, $u \in \mathcal{F}$, $u \equiv 0$~q.e.~on $\Omega^c$ and $u \equiv 1$~q.e.~on $K$.  Hence by the energy minimizing property of $\psi$, we have
	\begin{equation} \label{e:lcac1}
	\int_{X \times X} (u(x)-u(y))^2\,J(dx,dy)  = \mathcal{E}(u,u)\ge \mathcal{E}(\psi,\psi)= \int_{X \times X} (\psi(x)-\psi(y))^2\, J(dx,dy).
	\end{equation}
	Note that 
	\begin{align} \label{e:lcac2}
	(u(x)-u(y))^2 &=0, & \quad \mbox{for all $x,y \in A$;} \nonumber \\
	(u(x)-u(y))^2 &\le (g(x)-g(y))^2 & \quad \mbox{for all $(x,y )\in( A \times A^c) \cup (A^c \times A)$;} \nonumber \\
		(u(x)-u(y))^2 &\le (\psi(x)-\psi(y))^2 + (g(x)-g(y))^2  & \quad \mbox{for all $x,y \in A^c$.} 
	\end{align}
Combining \eqref{e:lcac1} and \eqref{e:lcac2}, and using the symmetry of $J$, we have
	\begin{align*}
 	 \MoveEqLeft{\int_{X \times X} (\psi(x)-\psi(y))^2\, J(dx,dy)} \\ &\le 2 \int_{A \times A^c} (g(x)-g(y))^2\, J(dx,dy) + \int_{A^c \times A^c } \left((\psi(x)-\psi(y))^2 + (g(x)-g(y))^2 \right) \, J(dx,dy) 
	\end{align*}
	which can be rearranged to 
	\begin{align*}
\MoveEqLeft{2 \int_{A \times A^c} (\psi(x)-\psi(y))^2\, J(dx,dy) + \int_{A^c \times A^c }   (\psi(x)-\psi(y))^2  \, J(dx,dy)} \\
	&\le 2 \int_{A \times A^c} (g(x)-g(y))^2\, J(dx,dy) + \int_{A^c \times A^c }   (g(x)-g(y))^2  \, J(dx,dy) 
	\end{align*}
	Therefore 
	\begin{align*}
	\int_{A} \Gamma(\psi,\psi)(dx)&=  	\int_{A \times (A \cup A^c)}  (\psi(x)-\psi(y))^2\, J(dx,dy)
	\\ &\le  2 \int_{A \times A^c} (g(x)-g(y))^2\, J(dx,dy) + \int_{A^c \times A^c }   (g(x)-g(y))^2  \, J(dx,dy)  \\
	&\le 2 \int_{A} \Gamma(g,g)(dx).
\end{align*}
	The proof in case (b) is similar, replacing $u$ with $v:= \psi \wedge (1-g)$ in the above argument. 	Note that $v= T(\psi,g)$ where $T: \mathbb{R}^2 \to \mathbb{R}$ is the $1$-Lipschitz function defined by $T(a,b):= \min(a, 1-b)$. Since $T(0,0)=0$ by the generalized contraction property \cite[Theorem A.2]{KS}, \cite[Exercise I.3.5, Proof of Proposition I.3.3.1]{BH91}, we have $v \in \mathcal{F}$.
	 Hence the above argument in case (a) applies, once we observe that 
		\begin{align*}  
		(v(x)-v(y))^2 &=0, & \quad \mbox{for all $x,y \in A$;} \nonumber \\
		(v(x)-v(y))^2 &\le (g(x)-g(y))^2 & \quad \mbox{for all $(x,y )\in( A \times A^c) \cup (A^c \times A)$;} \nonumber \\
		(v(x)-v(y))^2 &\le (\psi(x)-\psi(y))^2 + (g(x)-g(y))^2  & \quad \mbox{for all $x,y \in A^c$.} 
	\end{align*}
\end{proof}
The following lemma is a variant of \cite[Lemma 6.1]{BCLS}.
\begin{lem} \label{l:truncate} 
		Let $(\mathcal{E},\mathcal{F})$ be a regular, pure jump Dirichlet form on $L^2(X,m)$ with jump measure $J(dx,dy)= J(x,y)\,m(dx)\,m(dy)$.  Let $f \in \mathcal{F}$ be a non-negative function. For any $k \in \mathbb{Z}$, define 
	\begin{equation} \label{e:def-fk}
	f_k:= (0 \vee (f-2^k)) \wedge 2^k.
	\end{equation}
	Then 
 \begin{equation} \label{e:truncate}	\sum_{k \in \mathbb{Z}} \Gamma(f_k,f_k)(x) \le \Gamma(f,f)(x).
   \end{equation}
\end{lem}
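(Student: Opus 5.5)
The estimate reduces to a pointwise inequality for real numbers. Since
\[
\Gamma(f_k,f_k)(x)=\int_X (f_k(x)-f_k(y))^2 J(x,y)\,m(dy),
\]
Tonelli's theorem lets us interchange the sum over $k$ with the integral in $y$. It therefore suffices to show that for all real $a,b\ge 0$,
\begin{equation*}
\sum_{k\in\mathbb{Z}} \bigl(T_k(a)-T_k(b)\bigr)^2 \le (a-b)^2, \qquad T_k(t):=(0\vee(t-2^k))\wedge 2^k .
\end{equation*}
Applying this with $a=f(x)$, $b=f(y)$ and integrating against $J(x,y)\,m(dy)$ gives \eqref{e:truncate} for $m$-a.e.\ $x$. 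We do not need to know that $f_k\in\mathcal{F}$, although it does hold by the Markov property, because $T_k$ is a normal contraction up to a shift.

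To prove the scalar inequality, note that each $T_k$ is nondecreasing and $1$-Lipschitz, with $T_k'(t)=\one_{(2^k,2^{k+1})}(t)$ almost everywhere. The intervals $(2^k,2^{k+1})$, $k\in\mathbb{Z}$, are pairwise disjoint, and their union is $(0,\infty)$ up to a null set. Assume without loss of generality $b\le a$. Then
\[
T_k(a)-T_k(b)=\int_b^a \one_{(2^k,2^{k+1})}(t)\,dt =: \ell_k \ge 0 ,
\]
so $\sum_k \ell_k = \abs{(b,a)\cap(0,\infty)}\le a-b$. Since all $\ell_k\ge 0$,
\[
\sum_k \ell_k^2\le \Bigl(\sum_k \ell_k\Bigr)^2\le (a-b)^2 .
\]

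There is no real obstacle here. The only point requiring care is that the sum over $k\in\mathbb{Z}$ is infinite, which is handled by nonnegativity and monotone convergence. Measurability of $x\mapsto \Gamma(f_k,f_k)(x)$ is also routine.
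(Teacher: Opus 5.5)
Your proof is correct and follows the paper's argument: for nonnegative values the paper uses the identity $\sum_{k} \abs{f_k(x)-f_k(y)} = \abs{f(x)-f(y)}$, then the bound $\sum_k \ell_k^2 \le (\sum_k \ell_k)^2$, and then integrates against $J(x,y)\,m(dy)$. Your representation $T_k(a)-T_k(b)=\int_b^a \one_{(2^k,2^{k+1})}(t)\,dt$ simply makes that identity explicit; also, $T_k(0)=0$, so $T_k$ is already a normal contraction and no shift is needed.
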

\begin{proof}
Since $f(x), f(y) \ge 0$, we note that for any $x,y \in X$,
\[
\sum_{k \in \mathbb{Z}} \lvert f_k(x) - f_k(y) \rvert
= \lvert f(x) - f(y) \rvert .
\]
The estimate between $\ell_1$ and $\ell_2$ norms is 
\[
\sum_{k \in \mathbb{Z}} (f_k(x) - f_k(y))^2 \le (f(x)-f(y))^2,
\]
and therefore 
\begin{align*}
	\sum_{k \in \mathbb{Z}} \Gamma(f_k,f_k)(x) &= \int \sum_{k \in \mathbb{Z}} (f_k(x) - f_k(y))^2 J(x,y)\,m(dy) \\
	&\le \int  (f(x) - f(y))^2 J(x,y)\,m(dy) = \Gamma(f,f)(x).
\end{align*}
\end{proof}

The following result is a version of cutoff Sobolev inequality for functions with somewhat restrictive boundary conditions. This restriction on the boundary condition will be later removed by the Whitney blending argument developed in Lemma \ref{l:wb}. 
	\begin{lemma} \label{l:zerobdy-cs}
			Let $(\mathcal{E},\mathcal{F})$ be a regular, pure jump Dirichlet form on $L^2(X,m)$ with jump measure $J(dx,dy)= J(x,y)\,m(dx)\,m(dy)$. 
	Let $\psi$ be the equilibrium potential for $K \subset \Omega$, where $\Omega$ is a non-polar, precompact open subset and $K$ is compact. Let $f \in \mathcal{F} \cap C(X)$ be   one of the following conditions hold:
	\begin{enumerate}[(a)]
		\item $f \equiv 0$ on some neighborhood of $\Omega^c$, or
		\item $f \equiv 0$ on some neighborhood of $K$.
	\end{enumerate}
	Then for any $U \subset X$, 
	\begin{equation} \label{e:zero-bdy}
	\int_{U} f^2\,d\Gamma(\psi,\psi) \le 48 \Gamma(f,f)(U).
	\end{equation}
\end{lemma}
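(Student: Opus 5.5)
Our plan is to combine Lemma \ref{l:truncate} (dyadic layer decomposition) with Lemma \ref{l:logcacc} (comparison of the energy measure of $\psi$ with that of a test function $g$ on a set where $g\equiv 1$). First, since $f^2=\abs{f}^2$, $\Gamma(\abs f,\abs f)\le \Gamma(f,f)$ pointwise, and $\abs f$ satisfies the same vanishing condition (a) or (b), we may assume $f\ge 0$.

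For $k\in\mathbb{Z}$, let $f_k=(0\vee(f-2^k))\wedge 2^k$ as in \eqref{e:def-fk}, and set $g_k:=2^{-k}f_k$. Then $g_k\in\mathcal{F}\cap C(X)$ and $0\le g_k\le 1$. Moreover, $g_k\equiv 1$ on $A_k:=\{f\ge 2^{k+1}\}$, and $g_k$ vanishes wherever $f$ does, so $g_k$ inherits condition (a) or (b). Applying Lemma \ref{l:logcacc} with $A=A_k\cap U$ (where $g_k\equiv 1$) gives
\[
\Gamma(\psi,\psi)(A_k\cap U)\le 2\,\Gamma(g_k,g_k)(A_k\cap U)\le 2\cdot 4^{-k}\,\Gamma(f_k,f_k)(U).
\]

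Next we decompose the integral. The set $\{f>0\}$ is the disjoint union of the level sets $L_k:=\{2^{k+1}\le f<2^{k+2}\}$ over $k\in\mathbb{Z}$. On $L_k$ we have $f^2\le 16\cdot 4^{k}$, and $L_k\subset A_k$. Hence
\[
\int_U f^2\,d\Gamma(\psi,\psi)\le \sum_k 16\cdot 4^k\,\Gamma(\psi,\psi)(L_k\cap U)\le \sum_k 16\cdot 4^k\cdot 2\cdot 4^{-k}\Gamma(f_k,f_k)(U)=32\sum_k\Gamma(f_k,f_k)(U).
\]
Lemma \ref{l:truncate} bounds the last sum by $\Gamma(f,f)(U)$, giving the constant $32\le 48$. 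The slack in the stated constant suggests the author's level-set bookkeeping is slightly different, or accounts for the reduction to $f\ge0$; either way the argument closes.

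The main point needing care is the legitimacy of applying Lemma \ref{l:logcacc} to $g_k$. We must check that $g_k$ is continuous, lies in $\mathcal{F}$ (it is a normal contraction of $f$), and vanishes on a \emph{neighborhood} of $\Omega^c$ or of $K$. This holds because $g_k=0$ wherever $f<2^k$, which contains the neighborhood on which $f\equiv0$. We also need that $\Gamma(\psi,\psi)$ is absolutely continuous with respect to $m$, which holds in this pure jump setting, so null sets such as $\{f=0\}$, where the integrand vanishes anyway, cause no trouble.
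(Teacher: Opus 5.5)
Your argument is exactly the paper's: reduce to $f\ge 0$, split $f$ into the layers $f_k$, apply Lemma~\ref{l:logcacc} to $g_k=2^{-k}f_k$ on the set where $g_k\equiv 1$, and resum via Lemma~\ref{l:truncate}. Summing over the disjoint level sets $L_k$ instead of using $f^2\le 12\sum_k 4^k\one_{A_k}$ is an immaterial variation. However, your proof shares a genuine gap with the paper, located at the step that carries all the weight: $\Gamma(\psi,\psi)(A_k\cap U)\le 2\,\Gamma(g_k,g_k)(A_k\cap U)$. Lemma~\ref{l:logcacc} is false as stated. Compare $\mathcal{E}(\psi,\psi)\le\mathcal{E}(u,u)$ for $u=\psi\vee g$ (or $u=\psi\wedge(1-g)$) and cancel the $A^c\times A^c$ energy of $\psi$. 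What remains on the right-hand side is
\[
2\int_{A\times A^c}(g(x)-g(y))^2\,J(dx,dy)+\int_{A^c\times A^c}(g(x)-g(y))^2\,J(dx,dy)=\mathcal{E}(g,g).
\]
The second term is not controlled by $\Gamma(g,g)(A)=\int_{A\times A^c}(g(x)-g(y))^2\,J(dx,dy)$, so the final inequality in that proof is unjustified.

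In fact the statement you are proving fails for general $U$. Let $X=\{p_0,p_1,p_2,p_3\}$ with $d(p_i,p_j)=\abs{i-j}$, counting measure, $J(p_i,p_{i+1})=J(p_{i+1},p_i)=1$ and $J=0$ otherwise, $K=\{p_0\}$ and $\Omega=\{p_0,p_1,p_2\}$. Then $\psi=(1,2/3,1/3,0)$. For $f=\one_\Omega$, which satisfies (a), and $U=\{p_1\}$, one gets $\int_U f^2\,d\Gamma(\psi,\psi)=2/9$, whereas $\Gamma(f,f)(U)=0$. Likewise $f=\one_{\{p_1,p_2,p_3\}}$ and $U=\{p_2\}$ give a counterexample for (b).

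This is not an artifact of a degenerate kernel. Take $J(x,y)=\abs{x-y}^{-n-\gamma}$ on $\mathbb{R}^n$ with $n\ge 2$, $\gamma\in(0,2)$, and set $K=\overline{B(0,1)}$, $\Omega=B(0,3L)$, $U=B(z,1/2)$ with $\abs{z}=2$, and $f$ smooth with $\one_{B(0,L)}\le f\le \one_{B(0,2L)}$. Then $\Gamma(\psi,\psi)(U)\gtrsim 1$ uniformly in $L$, since by transience $\psi$ stays bounded away from $1$ on $U$. Meanwhile $\Gamma(f,f)(U)\lesssim L^{-\gamma}$.

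What the minimality argument does prove, in both cases (a) and (b), is $\Gamma(\psi,\psi)(A)\le\mathcal{E}(g,g)$. Fed into your bookkeeping, this yields
$\int_X f^2\,d\Gamma(\psi,\psi)\le 16\sum_k 4^k\,\Gamma(\psi,\psi)(A_k)\le 16\sum_k\mathcal{E}(f_k,f_k)\le 16\,\mathcal{E}(f,f)$.
By the symmetry of $J$, $\mathcal{E}(f,f)\le 2\,\Gamma(f,f)(\{f\neq 0\})$. Hence the bound \eqref{e:zero-bdy} holds with constant $32$ for every $U\supseteq\{f\neq 0\}$; that is the version of the lemma you can actually prove.

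Note that this weaker version covers $f_1$ in the proof of Proposition~\ref{p:css} but not $f_2$, which agrees with $f$ outside $B(x,3r/2)$. An extra localization is needed there. For instance, one could first replace $f$ by a blend of $f$ with $0$ from Lemma~\ref{l:wb} that coincides with $f$ on $B(x,2r+\rho)$, where $\Gamma^{(\rho)}(\psi,\psi)$ is concentrated.
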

\begin{proof}
	By replacing $f$ by $\abs{f} \in \mathcal{F}$ and noting that $\Gamma(\abs{f},\abs{f}) \le \Gamma(f,f)$, we assume that $f$ is non-negative without loss of generality \cite[Theorem 1.4.1(e)]{FOT}.
	
For each $k \in \mathbb{Z}$, we define $f_k$ as given in \eqref{e:def-fk}, and set $g_k:= 2^{-k}f_k$, so that
\begin{equation} \label{e:zb0}
g_k \equiv 1 \quad  \mbox{on $A_k:= \{x \in U :  f(x) \ge 2^{k+1}\}$.}
\end{equation}
Note that 
\[
f(x)^2 \ge \sum_{k \in \mathbb{Z}} 4^{k} \one_{A_k}(x) \ge f(x)^2/12, \quad \mbox{for all $x \in U$,}
\]
and hence 
\begin{equation} \label{e:zb1}
\int_U f^2 \,d\Gamma(\psi,\psi)  \le 12\cdot 4^k \sum_{k \in \mathbb{Z}} \Gamma(\psi,\psi)(A_k).
\end{equation}
By applying Lemma \ref{l:logcacc} to $g=g_k$ and $A=A_k$ using \eqref{e:zb0}, we obtain 
\begin{equation}
	\label{e:zb2}
	\Gamma(\psi,\psi)(A_k) \le 2 \Gamma(g_k,g_k)(A_k) \le 2 \cdot 4^{-k} \Gamma(f_k,f_k)(A_k) \le  2 \cdot 4^{-k} \Gamma(f_k,f_k)(U)
\end{equation}
Hence by \eqref{e:zb1}, \eqref{e:zb2}, and Lemma \ref{l:truncate}, we obtain 
\[
\int_U f^2 \,d\Gamma(\psi,\psi) \le 48 \sum_{k \in \mathbb{Z}} \Gamma(f_k,f_k)(U) \le 48 \Gamma(f,f)(U).
\]

\end{proof}
\subsection{Whitney cover}

We recall the notion of a $\epsilon$-Whitney cover from \cite[Definition 3.16]{GS}. For a proper open set $U \subsetneq X$, we denote by 
\[
\delta_U(x):= \dist(x, U^c) =\inf_{y \in U^c} d(x,y).
\]
\begin{definition} \label{d:whitney}
	Let $\epsilon \in (0,1/2)$ and $U \subsetneq X$ be open. 
	We say a collection of balls $\mathfrak{R}:= \set{B(x_i,r_i): x_i \in U, r_i >0, i \in I}$ is an $\epsilon$-Whitney cover if it satisfies the following properties:
	\begin{enumerate}[(i)]
		\item The collection of sets $\{B(x_i,r_i), i \in I \}$ are pairwise disjoint.
		\item The radii $r_i$ satisfy $$r_i= \frac{\epsilon}{1+\epsilon}\delta_U(x_i), \quad \mbox{for all $i \in I$.}$$
		\item $\bigcup_{i \in I} B(x_i, K_\eps r_i) = U$, where $K_\epsilon=2 (1+\epsilon) \in (2,3)$.
	\end{enumerate}
	Note that since  $K_\epsilon < 3$   by (iii), we have $$\bigcup_{i \in I} B_U(x_i, 3 r_i) = U.$$
\end{definition}
The existence of a Whitney cover follows by applying Zorn’s lemma to select
a maximal collection of balls satisfying properties (i) and (ii) in
Definition~\ref{d:whitney}.
By \cite[Proposition 3.2-(d)]{Mur24}, we have for any open set $U \subsetneq X$, and for any $\epsilon \in (0,1/2)$, there exists $C>0$ depending only on $\epsilon$ and the constants in \hyperlink{vd}{$\on{VD}$} such that for any $\epsilon$-Whitney cover $\{B(x_i,r_i): i \in I\}$, we have
\begin{equation} \label{e:w-overlap}
	\sum_{i \in I} \one_{B(x_i, r_i/\epsilon)}  \le C \one_U.
\end{equation}
We also record the following consequence of triangle inequality for future use. Let $\{B(x_i,r_i): i \in I\}$ be an $\epsilon$-Whitney cover of an open set $U \subsetneq X$, where $\epsilon \in (0,1/2)$. Then for any $\lambda >0$, for all $i \in I$ and for all $z \in B(x_i, \lambda r_i)$, we have 
\begin{equation} \label{e:tri-whi}
 \left(1 - \frac{\lambda \epsilon}{1+\epsilon}\right) \delta_U(x_i) <	\delta_U(z) <  \left(1 + \frac{\lambda \epsilon}{1+\epsilon}\right) \delta_U(x_i).
\end{equation}
 The proof for the case $\lambda=3$ in \cite[Proposition 3.2-(b)]{Mur24} readily generalizes to yield \eqref{e:tri-whi}.

The following existence of partition of unity follows from the same argument as the proof of  Lemma \ref{l:partition}, where the desired bounded overlap property follows from \eqref{e:w-overlap}.
\begin{lem} \label{l:w-partition}
		Let $(X,d,m)$ be a metric measure space and let $(\mathcal{E},\mathcal{F})$ be a regular Dirichlet form that satisfies  \hyperlink{cap}{$\on{Cap}(\phi)_\le$} and let $m$ be a doubling measure. There exists $\epsilon_0 \in (0,1/6),C_1,c_1 > 0$ such that 
		for any open set $U \subsetneq X$ be open and for any $\epsilon$-Whitney cover  $\{B_i=B(x_i,r_i): i \in I\}$   of $U$ where $0<\epsilon < \epsilon_0$, there exists a partition on unity $\{\psi_{B_i}: i \in I\}$ of $V$ such that the following properties hold:
		\begin{enumerate}[(a)]
			\item (partition of unity) $\sum_{i \in I} \psi_{B_i} \equiv \one_U$. 
			\item (controlled energy) For each $i \in I$, we have $\psi_{B_i} \in \mathcal{F} \cap C(X)$ and
			\begin{equation}\label{e:w-pu}
			c_1 \one_{B(x_i,3r_i)} \le \psi_{B_i} \le \one_{B(x_i,6r_i)}, \quad	\mathcal{E}(\psi_{B_i},\psi_{B_i}) \le C_1 \frac{V(x_i,r_i)}{\phi(r_i)}.
			\end{equation}
		\end{enumerate}
\end{lem}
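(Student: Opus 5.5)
We follow the proof of Lemma \ref{l:partition}, with the net $N$ replaced by the Whitney centers $\{x_i\}$ and the bounded overlap \eqref{e:overlap} replaced by \eqref{e:w-overlap}. First, for each $i \in I$ we use \hyperlink{cap}{$\operatorname{Cap}(\phi)_\le$} in the normalized form of Remark \ref{r:covering} (so $A_1=2$), together with regularity, to obtain $\wt\psi_i \in \mathcal{F}\cap C(X)$ with
\[
\one_{B(x_i,3r_i)} \le \wt\psi_i \le \one_{B(x_i,6r_i)}, \qquad \mathcal{E}(\wt\psi_i,\wt\psi_i) \lesssim \frac{V(x_i,3r_i)}{\phi(3r_i)} \lesssim \frac{V(x_i,r_i)}{\phi(r_i)}.
\]
The last inequality uses \hyperlink{vd}{$\on{VD}$} and \eqref{e:scale}. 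The condition $r < \diam(X,d)/A_2$ in \hyperlink{cap}{$\operatorname{Cap}(\phi)_\le$} should be harmless, since $3r_i \le 3\epsilon\,\delta_U(x_i)$ with $\epsilon$ small; if needed we shrink $\epsilon_0$, or treat the bounded-diameter case separately. Choosing $\epsilon_0<1/6$ gives $6r_i < \delta_U(x_i)$, so $B(x_i,6r_i)\subset U$ and each $\wt\psi_i$ vanishes outside $U$.

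Next we set $S:=\sum_{j\in I}\wt\psi_j$. By Definition \ref{d:whitney}(iii), the balls $B(x_i,3r_i)$ cover $U$, so $S\ge 1$ on $U$. By \eqref{e:w-overlap} (note $6r_i \le r_i/\epsilon$), we also have $S \le C\one_U$. We then define $\psi_{B_i}:=\wt\psi_i/S$ on $U$ and $\psi_{B_i}:=0$ off $U$. This gives (a) and the pointwise bounds $C^{-1}\one_{B(x_i,3r_i)}\le \psi_{B_i}\le \one_{B(x_i,6r_i)}$. Continuity holds because the sum is locally finite: a neighbourhood of a point of $U$ meets only boundedly many supports, by \eqref{e:tri-whi} and \eqref{e:w-overlap}.

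For membership in $\mathcal{F}$ and the energy bound, we write $\psi_{B_i}=T(\wt\psi_i, S_i)$, where $T$ is the Lipschitz function from the proof of Lemma \ref{l:partition} and $S_i$ is the sum of $\wt\psi_j$ over those $j$ with $B(x_j,6r_j)\cap B(x_i,6r_i)\neq\emptyset$. Then $S_i=S$ on the support of $\wt\psi_i$. By \eqref{e:tri-whi}, such $j$ satisfy $r_j\asymp r_i$ and $d(x_i,x_j)\lesssim r_i$. Their number is uniformly bounded: the balls $B(x_j,r_j)$ are disjoint, have comparable volumes by \eqref{e:vd-iter}, and lie in $B(x_i,Cr_i)$. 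The generalized contraction property then gives
\[
\mathcal{E}(\psi_{B_i},\psi_{B_i}) \lesssim \mathcal{E}(\wt\psi_i,\wt\psi_i)+\#\{j\}\sum_j \mathcal{E}(\wt\psi_j,\wt\psi_j) \lesssim \frac{V(x_i,r_i)}{\phi(r_i)},
\]
using $V(x_j,r_j)\asymp V(x_i,r_i)$ and $\phi(r_j)\asymp\phi(r_i)$.

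The main obstacle is one subtlety in this step: $T(\wt\psi_i,S_i)$ must coincide with $\wt\psi_i/S$ everywhere, including where $S_i$ may be less than $1$. This happens only outside the support of $\wt\psi_i$, where $T(0,\cdot)=0$ once $T$ is extended suitably, for instance by McShane's extension applied on $\{a=0\}\cup([0,1]\times[1,\infty))$. The remaining check, the comparability of radii for neighbouring Whitney balls, is routine given \eqref{e:tri-whi}.
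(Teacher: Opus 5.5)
Your proposal is correct and follows the paper's route. The paper simply invokes the proof of Lemma~\ref{l:partition} with \eqref{e:w-overlap} in place of \eqref{e:overlap}, which is exactly what you carry out, using $\epsilon_0<1/6$ to get $B(x_i,6r_i)\subset U$ and $6r_i\le r_i/\epsilon$. Your extension of $T$ from $\{a=0\}\cup([0,1]\times[1,\infty))$, so that $T(0,b)=0$ even where the local sum $S_i$ lies in $(0,1)$, supplies a detail the paper leaves implicit, as does shrinking $\epsilon_0$ to respect the range $r<\diam(X,d)/A_2$ via $\delta_U(x_i)\le\diam(X,d)$.
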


\subsection{Whitney blending} \label{ss:whitney}

The following \emph{Whitney blending} result is a non-local analogue of
\cite[Lemma 1.6]{Eri} and is the key ingredient behind our main result.
\begin{lem} \label{l:wb}
	For any $\eta \in (0,1)$, there exists $C$ depending only on the constants in \hyperlink{jphi}{$\on{J(\phi)}$},  \hyperlink{vd}{$\on{VD}$}, and   $\eta$  such that, for any $x_0 \in X, 0<r <\diam(X,d)/4$ and for any $f,g \in \mathcal{F} \cap C(X)$ and for any $A \in [2,\infty)$, there exists $h \in \mathcal{F} \cap C(X)$ such that  $h \equiv f$ on $B(x_0,r)$, $h \equiv g$ on $B(x_0,(1+\eta)r)^c$ and 
	\begin{align} \label{e:wb}
	\MoveEqLeft{\int_{B(x_0,Ar) \times B(x_0,Ar)}(h(x)-h(y))^2) \,J(dx,dy)} \\ &\le C \int_{B(x_0,Ar) \times B(x_0,Ar)}((f(x)-f(y))^2+(g(x)-g(y))^2) \,J(dx,dy)   		    + \frac{C}{\phi(r)} \int_{B(x_0,Ar)} (f-g)^2\,dm. \nonumber
	\end{align}
\end{lem}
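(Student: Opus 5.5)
We build $h$ by the Whitney blending construction: on an intermediate open set we interpolate between $f$ and $g$ with ball averages of a cutoff-weighted combination. Concretely, let $U := B(x_0,(1+\eta)r)\setminus \ol{B}(x_0,r)$, or rather an open set $U$ whose complement contains $\ol B(x_0,r)$ and $B(x_0,(1+\eta)r)^c$. Fix $\epsilon<\epsilon_0$ small in terms of $\eta$, take an $\epsilon$-Whitney cover $\{B_i=B(x_i,r_i)\}$ of $U$ and the partition of unity $\{\psi_{B_i}\}$ from Lemma~\ref{l:w-partition}. Let $\chi$ be a fixed cutoff, e.g.\ $\chi = 1$ near $B(x_0,r)$, $\chi=0$ outside $B(x_0,(1+\eta)r)$, built so that $\chi$ is constant on each $B(x_i,6r_i)$ up to an error we can absorb. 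The simplest choice is to put $u:=\chi f+(1-\chi)g$ with $\chi$ taking values in $\{0,1\}$ off $U$ (say $\chi=\one_{B(x_0,(1+\eta/2)r)}$). Then set
\[
h:= u \ \text{on } U^c,\qquad h:=\sum_{i\in I} u_{B_i}\,\psi_{B_i}\ \text{on } U,
\]
where $u_{B_i}$ denotes the average of $u$ over $B(x_i,r_i)$ (or a slightly larger ball). Near $\partial U$ the radii $r_i\to0$, so continuity of $f,g$ gives $h\in C(X)$ with $h=f$ on $B(x_0,r)$ and $h=g$ outside $B(x_0,(1+\eta)r)$.

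To see $h\in\mathcal F$, Proposition~\ref{p:domain} reduces matters to $h\in L^2$ (clear) and finiteness of $\int(h(x)-h(y))^2J(dx,dy)$. That finiteness is essentially the global version of the estimate \eqref{e:wb}, so the main work is the energy bound. We split the pairs $(x,y)\in B(x_0,Ar)^2$ into four cases.

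\emph{Both points in $U^c$.} Here $h=f$ or $g$, and cross pairs (one point inside $B(x_0,r)$, the other outside $B(x_0,(1+\eta)r)$) are at distance $\ge\eta r$. Writing $f(x)-g(y)=(f(x)-f(y))+(f-g)(y)$ and using \eqref{e:jump-tail}, these pairs are bounded by the $f$-energy plus $\phi(\eta r)^{-1}\int_{B(x_0,Ar)}(f-g)^2$.

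\emph{Both points in $U$ and close}, i.e.\ $d(x,y)\lesssim \delta_U(x)$. Repeat \eqref{e:dom8}, \eqref{e:dom12}, \eqref{e:dom13} with $K=u_{B(x,\cdot)}$, controlling $(u_{B_i}-u_{B_j})^2$ for neighbouring Whitney balls by $\phi(r_i)/V(x_i,r_i)$ times the local $J$-energy of $u$, and use $\mathcal E(\psi_{B_i})\lesssim V/\phi(r_i)$ together with \eqref{e:w-overlap}.

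\emph{Both points in $U$ and far}, or \emph{one point in $U$ and one in $U^c$}. Mimic \eqref{e:dom9}--\eqref{e:dom11}: bound $(h(x)-h(y))^2$ by averages of $(u(z_1)-u(z_2))^2$ over balls of radius comparable to $\delta_U$, and apply Lemma~\ref{l:ujs}. When one point lies in $U^c$ we use a Whitney chain, or the fact that $r_i\asymp\delta_U(x_i)\le d(x,y)$, so that the averaging ball around $x$ stays inside a ball where $J$ is comparable.

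In every case we reach $\lesssim \int\int(u(z_1)-u(z_2))^2J$ over $B(x_0,Ar)^2$, after adding a tail term that is controlled by \eqref{e:jump-tail} and the $L^2$ norm. It remains to bound the $u$-energy by the right side of \eqref{e:wb}. Write
\[
u(z_1)-u(z_2)=\chi(z_1)(f(z_1)-f(z_2))+(1-\chi(z_1))(g(z_1)-g(z_2))+(\chi(z_1)-\chi(z_2))(f-g)(z_2).
\]
The last term is nonzero only when the pair crosses the sphere of radius $(1+\eta/2)r$. This is the main obstacle: for an indicator $\chi$ the crossing pairs at small distance carry infinite energy in general. The fix is to never use $u$'s raw energy across that sphere. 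For the Whitney balls touching the sphere we instead replace the averages $u_{B_i}$ by $f_{B_i}$ or $g_{B_i}$, and compare $f_{B_i}$ with $g_{B_i}$ using $|f_{B_i}-g_{B_i}|^2\le (f-g)^2_{B_i}$. Since $r_i\asymp\eta r$ on the sphere, this produces $\sum_i \mathcal E(\psi_{B_i})\,(f-g)^2_{B_i}\lesssim \phi(r)^{-1}\int_{B(x_0,Ar)}(f-g)^2$ via \eqref{e:scale}. Alternatively one can take $\chi$ to be a $\operatorname{Cap}(\phi)_\le$ cutoff, but the averaging trick avoids needing control of $\int (f-g)^2\,d\Gamma(\chi,\chi)$, which is precisely what a cutoff Sobolev inequality would provide and is not yet available here.
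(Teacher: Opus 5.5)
Your proposal is correct in outline and runs on the same machinery as the paper: the Whitney cover of $U=B(x_0,(1+\eta)r)\setminus\ol{B(x_0,r)}$, the partition of unity of Lemma~\ref{l:w-partition}, Whitney-average extension, and the close/far splitting via Lemma~\ref{l:ujs} and \eqref{e:dom8}--\eqref{e:dom14}. What differs is how the blending is organized.

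The paper first reduces to $g=0$ by linearity: it blends $f-g$ with $0$ and then adds $g$ back. It keeps only the Whitney balls whose $6r_i$-dilates meet $B(x_0,(1+\eta/2)r)$, so the blended function vanishes on a neighbourhood of the outer sphere. This has several consequences:
\begin{enumerate}[(i)]
\item The Whitney-extension boundary analysis and the continuity check are needed only at the inner sphere.
\item Every truncation cost (\eqref{e:wb13}, \eqref{e:wb18}, \eqref{e:wb20}) has the form $\phi(r)^{-1}\int f^2\,dm$.
\item The global energy bound required by Proposition~\ref{p:domain} automatically localizes to $B(x_0,(1+\eta)r)\subset B(x_0,Ar)$.
\end{enumerate}

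You instead treat $f$ and $g$ symmetrically. On $U$ your $h=\sum_{i\in I}c_i\psi_{B_i}$, with $c_i\in\{f_{B_i},g_{B_i}\}$, glues the Whitney extension of $f$ from the inside to that of $g$ from the outside. The coefficient switch happens in the band around $\partial B(x_0,(1+\eta/2)r)$, where $r_i\asymp_\eta r$, and costs $\sum_i\mathcal{E}(\psi_{B_i})\,((f-g)^2)_{B_i}\lesssim\phi(r)^{-1}\int(f-g)^2\,dm$; this plays exactly the role of \eqref{e:wb12}--\eqref{e:wb13}. This is a legitimate alternative, and it never uses pointwise values of $g$ inside $U$. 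The price is a second boundary analysis and continuity check at the outer sphere (for $g$), a separate check that the global energy is finite, and bookkeeping of mixed $f$/$g$ terms.

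On that last point, do not state that every case is bounded by $\int\int(u(z_1)-u(z_2))^2J$. For $\chi=\one_{B(x_0,(1+\eta/2)r)}$ that integral can be infinite. What actually happens is as follows:
\begin{enumerate}[(i)]
\item In the far-pair and $U$--$U^c$ cases, whenever $f$-data near one point is compared with $g$-data near the other, the two points are at distance $\gtrsim_\eta r$.
\item Splitting $f(z_1)-g(z_2)=(f(z_1)-f(z_2))+(f-g)(z_2)$ and using \eqref{e:jump-tail} then produces the $\phi(r)^{-1}\int(f-g)^2\,dm$ term.
\item So the coefficient switch is genuinely needed only in the close-pair estimate.
\end{enumerate}
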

\begin{proof}
	As explained in \cite[Proof of Lemma 1.6]{Eri}, it suffices to consider the case $g=0$ by replacing $f$ with $\wt{f}=f-g$ and $g$ with $\wt{g}=0$. Then by considering the function $\wt{h}$ with  $\wt{h} \equiv \wt{f}$ on $B(x_0,r)$, $\wt{h} \equiv 0$ on $B(x_0,(1+\eta)r)^c$  with satisfying the estimate 
	\begin{align*}
	\MoveEqLeft{\int_{B(x_0,Ar) \times B(x_0,Ar)}(\wt{h}(x)-\wt{h}(y))^2 \,J(dx,dy)} \\
	&\lesssim  \int_{B(x_0,Ar) \times B(x_0,Ar)}(\wt{f}(x)-\wt{f}(y))^2 \,J(dx,dy)  + \frac{1}{\phi(r)} \int_{B(x_0,Ar)} (f-g)^2\,dm,
	\end{align*}
	and using the estimate  
	\begin{align*}
	(\wt{f}(x)-\wt{f}(y))^2 &\le 2 \left((f(x)-f(y))^2 + (g(x)-g(y))^2\right), \\
	((\wt{h}+g)(x)-(\wt{h}+g)(y))^2 &\le 2 \left((\wt{h}(x)-\wt{h}(y))^2 + (g(x)-g(y))^2\right),
	 \quad \mbox{for all $x,y \in X$,}
	\end{align*}
	 we obtain the desired conclusion for the function $h= \wt{h}+g$. 
	 Thus we assume without loss of generality that $g=0$.   
	 
	 Let $U= B(x_0,(1+\eta)r) \setminus \overline{B(x_0,r)}$ and let $\{B_i=B(x_i,r_i): i \in I \}$ denote a $\epsilon$-Whitney cover of $U$, where $\epsilon$ is such that 
	 \begin{equation} \label{e:eps1}
	 	0<\epsilon< \frac{\eta}{36},
	 \end{equation}
	 and such that
	 \begin{equation} \label{e:defA12}
	 A_1:=\frac{7 \epsilon }{1-5\epsilon} < \frac{1}{8}, \quad  A_2:= \frac{A_1}{1-A_1}<\frac{1}{4}, \quad     A_3:= \frac{6A_2}{(1-4A_2)(1-A_2)}<1, \quad 
	 \end{equation}
	 and 
	 \begin{equation} \label{e:defA4}
	 A_4:= \frac{1+4\epsilon}{1+\epsilon} A_3 + \frac{3 \epsilon}{1+\epsilon},  \quad A_5:= A_4+ (1+A_4) \frac{6 \epsilon(1-5 \epsilon)}{(1+\epsilon)^2} \le \frac{1}{1+\epsilon}.
	 \end{equation}
	For instance, $\epsilon=\eta/100$ satisfies all of the above requirements.
	 
	Define 
	 \[
	 \wt{I}= \{i \in I : B(x_i,6r_i) \cap B(x_0, (1+(\eta/2))r) \neq \emptyset\}.
	 \]
	 Let $\{\psi_{B_i}: i \in I \}$ denote the partition of unity as given in Lemma \ref{l:w-partition}.

	 Note that the choice of $\epsilon$ ensures that if $i \in \wt{I}$, then $B(x_i,6r_i) \subset B(x, (1+(\eta/2)r+6r_i)) \subset B(x_0, (1+(2\eta/3))r)$, so that 
	 \begin{equation} \label{e:wtI}
 		\one_{B(x_0, (1+(\eta/2))r)  \setminus \overline{B(x_0,r)}} \le \sum_{i \in \wt{I}} \psi_{B_i} \le \one_{B(x_0, (1+(2\eta/3))r)  \setminus \overline{B(x_0,r)}}.
	 \end{equation}
	  The desired function $h$ is defined by 
	 \begin{equation} \label{e:def-ext}
	 	h(x):= \begin{cases}
	 		f(x) &\mbox{if $x \in \overline{B(x_0,r)}$,}\\
	 		\sum_{i \in \wt{I}} f_{B_i} \psi_{B_i}(x) &\mbox{if $x \notin\overline {B(x_0,r)}$.}
	 	\end{cases}
	 \end{equation}
	 satisfies $h=f$ on $B(x_0,r)$ and $h =0$ on $B(x, (1+(2\eta/3))r)^c$.
	 
	 Next, we verify that $h \in \mathcal{F}$ using Proposition \ref{p:domain}. To this end, let us first check that $h \in L^2(X,m)$.  Using the bounded overlap property \eqref{e:w-overlap} and Jensen's inequality, we have 
	 \begin{align} \label{e:wb0}
	\int h^2\,dm &\lesssim \int_{\ol{B(x_0,r)}} f^2\,dm + \sum_{i \in \wt{I}} \int  \abs{f_{B_i}}^2 \one_{B(x_i,6r_i)} \,dm\quad \mbox{(by \eqref{e:w-overlap}, \eqref{e:w-pu}, Cauchy-Schwarz)}\nonumber\\
	&\lesssim \int_{\ol{B(x_0,r)}} f^2\,dm  + \sum_{i \in \wt{I}} \int  \abs{f_{B_i}}^2 m(B_i)\nonumber\\
	&\lesssim  \int_{\ol{B(x_0,r)}} f^2\,dm  + \sum_{i \in \wt{I}}\int_{B_i} f^2\,dm \le \int_{B(x_0,(1+\eta)r)} f^2\,dm < \infty \quad \mbox{(by Jensen's inequality).}
	\end{align}
	Next we show that 
	\begin{equation} \label{e:wb1}
		\int \int (h(x)-h(y))^2 J(x,y)\,m(dx)\,m(dy)<\infty.
	\end{equation}
	Let $K=\ol{B(x_0,r)}, F= B(x_0,(1+\eta)r)^c$. Using the symmetry of $J$, in order to estimate the integral \eqref{e:wb1}, we divide into five cases based depending on if $x,y$ belong to $K, U$, or $F$. \\
	\noindent \textbf{Case 1:} $x,y \in K$. In this case, since $h=f$ on $K$, we have
	\begin{equation} \label{e:wb2}
			\int_K \int_K (h(x)-h(y))^2 J(x,y)\,m(dx)\,m(dy)= 	\int_K \int_K (f(x)-f(y))^2 J(x,y)\,m(dx)\,m(dy).
	\end{equation}
\noindent \textbf{Case 2:} $x,y \in F$. Since $h=0$ in $F$, we have  
	\begin{equation} \label{e:wb3}
	\int_F \int_F (h(x)-h(y))^2 J(x,y)\,m(dx)\,m(dy)= 0.
\end{equation}
\noindent \textbf{Case 3:} $x,y \in U$. 
This case is similar to the proof of Proposition \ref{p:domain} (in particular, the argument to establish \eqref{e:dom3}) and is divided into two subcases depending on whether or not $d(x,y) \ge A_3 (\delta_U(x) \wedge \delta_U(y))$, where $A_3$ is given by \eqref{e:defA12}.

For any $x \in U$, we denote 
\[
\wt{I}(x):=\{i \in \wt{I}: \psi_{B_i}(x) \neq 0\}.
\]
By \eqref{e:w-overlap} and \eqref{e:w-pu}, $\sup_{x \in U} \wt{I}(x)\lesssim 1$. Using this, \eqref{e:w-pu} and the Cauchy-Schwarz inequality, we have 
\begin{align} \label{e:wb4}
	(h(x)-h(y))^2 &= \left( \sum_{i \in \wt{I}(x), j \in \wt{I}(y)} (f_{B_i}-f_{B_j}) \psi_{B_i}(x) \psi_{B_j}(y)  \right)^2 \nonumber \\
	&\lesssim \sum_{i \in \wt{I}(x), j \in \wt{I}(y)}(f_{B_i}-f_{B_j})^2 \one_{B(x_i,6r_i)}(x)\one_{B(x_j,6r_j)}(y).
\end{align}
Note that by \eqref{e:tri-whi}, for any $x \in U,i \in \wt{I}(x)$, we have 
\begin{equation}
	\label{e:wb5}
	B_i \subset B(x, 7r_i) = B\left(x,\frac{7 \epsilon \delta_U(x_i)}{1+\epsilon}\right) \subset B\left(x, A_1 \delta_U(x)\right), \quad \mbox{where $A_1$ is as given in \eqref{e:defA12}.}
\end{equation}
Hence by Jensen's inequality, \eqref{e:wb4} and \eqref{e:wb5},  we have for all $x,y \in U$  
\begin{equation} \label{e:wb6}
	(h(x)-h(y))^2 \lesssim \int_{B(y,A_1 \delta_U(y))} \int_{B(x, A_1\delta_U(x))}  \frac{(f(z)-(f(w))^2}{V(z,A_2\delta_U(z)))V(w,A_2\delta_U(w)))}\,m(dz)\,m(dw).
\end{equation}
Note that for any $x \in U$ and for any $z \in B(x, A_1\delta_U(x))$, by triangle inequality
\begin{equation} \label{e:te1}
(1-A_1) \delta_U(x)	\le \delta_U(z) \le (1+A_1) \delta_U(x),  \quad  \one_{B(x,A_1\delta_U(x))}(z) \le \one_{B(z,A_2\delta_U(z))}(x)
\end{equation}
If
$d(z,w)< 4A_2(\delta_U(z) \vee \delta_U(w))$, $ x \in B(z,A_2\delta_U(z)), w \in B(z,A_2\delta_U(z))$, then by triangle inequality
\begin{equation*} 
d(x,y) < 6 A_2 (\delta_U(z) \vee \delta_U(w)) \le  \frac{6A_2}{1-4A_2} (\delta_U(z) \wedge \delta_U(w)) \le \frac{6A_2}{(1-4A_2)(1-A_2)}(\delta_U(x) \wedge \delta_U(y)).
\end{equation*}
Hence by the choice of $A_3$ in \eqref{e:defA12}, for all $x,y \in U, z \in B(x,A_1 \delta_U(x)), w \in  B(y,A_1 \delta_U(y))$ such that $d(x,y) \ge  A_3 (\delta_U(x) \wedge \delta_U(y))$, we have
\begin{equation}\label{e:te2}
 A_2(\delta_U(z) \vee \delta_U(w)) \le d(z,w)/4.
\end{equation}

Hence for any $x,y \in U$ such that 
Thus by \eqref{e:wb6} and Lemma \ref{l:ujs}, we have 
\begin{align} \label{e:wb7}
	\MoveEqLeft{\int_U \int_U (h(x)-h(y))^2 \one_{\{d(x,y)\ge A_3 (\delta_U(x) \wedge \delta_U(y))\}} J(x,y)\,m(dx)\,m(dy) } \nonumber \\
&\stackrel{\eqref{e:wb6}, \eqref{e:te1}, \eqref{e:te2}}{\lesssim}\int_U \int_U  {(f(z)-(f(w))^2}  \one_{\{ d(z,w)\ge 4A_2(\delta_U(z) \vee \delta_U(w))   \}} \nonumber \\
&\nonumber\quad \fint_{B(w,A_2 \delta_U(w))} \fint_{B(z, A_2\delta_U(z))}  J(x,y)\,m(dx)\,m(dy) \,m(dz)\,m(dw) \\
&\lesssim \int_U \int_U  {(f(z)-(f(w))^2}  \one_{\{ d(z,w)\ge 4A_2(\delta_U(z) \vee \delta_U(w))   \}} J(z,w)\,m(dz)\,m(dw),
\end{align}
 where the last line above follows from Lemma \ref{l:ujs}.
 
 Note that if $x \in B(x_i,3r_i)$ for some $i$, we have 
  
 \[
 \delta_U(x) \le 3 r_i + \delta_U(x_i)= \frac{1+4\epsilon}{1+\epsilon} \delta_U(x_i),
 \]
 and hence 
 for $x \in  B(x_i,3r_i), y \in U$ such that $d(x,y) < A_3 \delta_U(x)$, we have  $d(x_i,y) \le A_4 \delta_U(x_i)$, where $A_4$ is as defined in \eqref{e:defA4}.
 Since $\bigcup_{i \in I} B(x_i,3r_i)=U$,   by \eqref{e:defA4} and \eqref{e:w-overlap}, for all $x,y \in U$, we have 
 \begin{equation} \label{e:wb8}
 	\one_{\{d(x,y)< A_3(\delta_U(x)\wedge \delta_U((y))\}} \le \sum_{i \in I} \one_{B(x_i,A_4 \delta_U(x_i))}(x) \one_{B(x_i,A_4 \delta_U(x_i))}(y).
 \end{equation}
 Hence 
\begin{align} \label{e:wb9}
	\MoveEqLeft{\int_U \int_U (h(x)-h(y))^2 \one_{\{d(x,y)< A_3 (\delta_U(x) \wedge \delta_U(y))\}} J(x,y)\,m(dx)\,m(dy) } \nonumber \\
	&\stackrel{\eqref{e:wb8}}{\lesssim}\sum_{i \in I}\int_{B(x_i,A_4 \delta_U(x_i))} \int_{B(x_i,A_4 \delta_U(x_i))}  {(h(x)-(h(y))^2}   J(x,y)\,m(dx)\,m(dy)
\end{align}
 For each $i \in I, x,y \in B(x_i,A_4 \delta_U(x_i))$ and any $j \in \wt{I}(x)$, we have 
 \[
 \delta_U(x_j)\stackrel{\eqref{e:tri-whi}}{\le} \frac{1-5 \epsilon}{1+\epsilon} \delta_U(x) \le \frac{1-5 \epsilon}{1+\epsilon}  \left(1+A_4\right) \delta_U(x_i)
 \]
 and using the definition of $A_5$ in \eqref{e:defA4} along with triangle inequality, we have
 \[
 B_j=B(x_j,r_j) \subset B(x_i, A_4 \delta_U(x_i)+ 6 r_j) \subset B(x_i,A_5 \delta_U(x_i)).
 \]
 Define $I_1:= \{i \in I:  B(x_i,A_4 \delta_U(x_i)) \subset B(x_0,1+\eta/2)r)\}$, so that $i \in I_1, x \in B(x_i,A_4 \delta_U(x_i))$ and $j \in I$ with $x \in B(x_j, 6 r_j)$ implies $j \in \wt{I}$. 
 
Since $\sum_{j \in \wt{I}} \psi_{B_j} \ge \one_{B(x_0,(1+\eta/2)r)\setminus \overline{ B(x_0,r)}}$, for any $i \in I_1$,  and for all $x,y \in B(x_i,A_4 \delta_U(x_i))$, we have
\begin{align} \label{e:wb10}
	(h(x)-h(y))^2 &\stackrel{\eqref{e:wtI}}{=} \left(\sum_{\substack{j \in \wt{I}, \\B_j \subset B(x_i,A_5 \delta_U(x_i))}} (f_{B_j} - f_{B(x_i,A_5 \delta_U(x_i))}) (\psi_{B_j}(x)-\psi_{B_j}(y))\right)^2 \nonumber\\
	&\stackrel{\eqref{e:w-overlap}}{\lesssim}\sum_{\substack{j \in \wt{I}, \\B_j \subset B(x_i,A_5 \delta_U(x_i))}} (f_{B_j} - f_{B(x_i,A_5 \delta_U(x_i))})^2 (\psi_{B_j}(x)-\psi_{B_j}(y))^2.
\end{align}
 Since $\mathcal{E}(\psi_{B_j},\psi_{B_j}) \lesssim \frac{m(B_j)}{\phi(r_j)}$, by the same argument in \eqref{e:dom13} and \eqref{e:dom14}, we obtain using \eqref{e:wb10} that 
\begin{align} \label{e:wb11}
	\MoveEqLeft{\sum_{i \in I_1} \int_{B(x_i,A_4 \delta_U(x_i))} \int_{B(x_i,A_4 \delta_U(x_i))}  {(h(x)-(h(y))^2}   J(x,y)\,m(dx)\,m(dy)} \nonumber \\
	 &\lesssim  \sum_{i \in I_1} \int_{B(x_i,A_5 \delta_U(x_i))} \int_{B(x_i,A_5 \delta_U(x_i))}  (f(x)-f(y))^2 J(x,y) \, m(dx)\,m(dy) \nonumber\\
	 &\lesssim \int_U \int_U (f(x)-f(y))^2 J(x,y) \, m(dx)\,m(dy),
\end{align}
where we use \eqref{e:defA4} and \eqref{e:w-overlap} in the last line. It remains to consider the case when $i \in I \setminus I_1$. In this case, if $i \in I \setminus I_1, x \in B(x_i, A_4 \delta_U(x_i))$ and
$j \in \widetilde{I}(x)$, then  by the triangle inequality \eqref{e:tri-whi}, we have 
\[
\delta_U(x_j) > \frac{1+\epsilon}{1+7 \epsilon} \delta_U(x) > \frac{(1+\epsilon)(1-A_4)}{1+7 \epsilon} \delta_U(x_i) \stackrel{\eqref{e:tri-whi}, \eqref{e:wtI}}{>} \frac{(1+\epsilon)(1-A_4)\eta}{3(1+7 \epsilon)(1+A_4)} r,
\]
and hence 
\[
r_j> a r, \quad \mbox{where $a:= \frac{\epsilon(1-A_4)\eta}{3(1+7 \epsilon)(1+A_4)}$.}
\]
Instead of \eqref{e:wb10}, we use the estimate
\begin{align} \label{e:wb12}
	(h(x)-h(y))^2 &= \left(\sum_{\substack{j \in \wt{I}, \\B_j \subset B(x_i,A_5 \delta_U(x_i))}} f_{B_j} (\psi_{B_j}(x)-\psi_{B_j}(y))\right)^2 \nonumber\\
	&\stackrel{\eqref{e:w-overlap}}{\lesssim}\sum_{\substack{j \in \wt{I}, \\B_j \subset B(x_i,A_5 \delta_U(x_i))}} (f_{B_j})^2(\psi_{B_j}(x)-\psi_{B_j}(y))^2.
\end{align}
Thus we estimate 
\begin{align} \label{e:wb13}
	\MoveEqLeft{\sum_{i \in I \setminus I_1} \int_{B(x_i,A_4 \delta_U(x_i))} \int_{B(x_i,A_4 \delta_U(x_i))}  {(h(x)-(h(y))^2}   J(x,y)\,m(dx)\,m(dy)} \nonumber \\
	&\lesssim  \sum_{j \in I, r_j >ar}  f_{B_j}^2 \mathcal{E}(\psi_{B_j},\psi_{B_j}) \lesssim \frac{1}{\phi(r)} \sum_{j \in I, r_j >ar}  \int_{B_j} f^2\,dm \quad \mbox{(by Jensen's inequality and $r_j >ar$)} \nonumber\\
	&\lesssim \frac{1}{\phi(r)} \int_U f^2\,dm.
\end{align}
 Combining \eqref{e:wb7}, \eqref{e:wb9}, \eqref{e:wb11} and \eqref{e:wb13}, we obtain 
 \begin{align} \label{e:wb14}
 	 \MoveEqLeft{\int_U \int_U (h(x)-h(y))^2 J(x,y)\,m(dx)\,m(dy) } \nonumber \\ &\lesssim \int_U \int_U (f(x)-f(y))^2 J(x,y)\,m(dx)\,m(dy) + \frac{1}{\phi(r)} \int_U f^2\,dm.
 \end{align}
 \noindent \textbf{Case 4:} $x \in K, y \in U$. We divide this into two sub-cases depending on whether or not $y \in B(x_0,(1+(\eta/2))r) \setminus \overline{B(x_0,r)}$. In this case, by \eqref{e:wtI}
  \begin{align} 
  	 (h(x)-h(y))^2 &=  	  (f(x)-\sum_{i \in \wt{I}}f_{B_i}\psi_{B_i}(y))^2 = \left(\sum_{i \in \wt{I}}(f(x)-f_{B_i})\psi_{B_i}(y)\right)^2 \nonumber \\
  	 &\lesssim \sum_{i \in \wt{I}} (f(x)-f_{B_i})^2\psi_{B_i}(y)^2 \lesssim \sum_{i \in \wt{I}(y)} (f(x)-f_{B_i})^2. \nonumber
  \end{align}	
  If $i \in \wt{I}(y)$, then 
  \[
  B_i \subset B(y,7r_i) \stackrel{\eqref{e:tri-whi}}{\subset} B\left(y,\frac{7\epsilon}{1-5 \epsilon}\delta_U(y) \right)= B\left(y,A_1\delta_U(y) \right).
  \]
  Hence by Jensen's inequality,  the above two displays, and \eqref{e:w-overlap}, we have 
      \begin{align}\label{e:wb15}
  	(h(x)-h(y))^2 &\lesssim \sum_{i \in \wt{I}(y)} (f(x)-f_{B_i})^2   \nonumber\\
  	&\lesssim  \int_{B(y,A_1\delta_U(y) )} \frac{(f(x)-f(z))^2}{V(y,A_1\delta_U(y))}\,m(dz) \lesssim \int_{B(y,A_1\delta_U(y) )} \frac{(f(x)-f(z))^2}{V(z,2A_2\delta_U(z))}\,m(dz).
  \end{align}	
  If $z \in B(y,A_1\delta_U(y))$, then $ B(y,A_1\delta_U(y)) \subset  B(z,2A_1\delta_U(y)) \subset B(z,2A_2 \delta_U(z))$,
  and hence 
  \begin{equation} \label{e:wb15a}
  \one_{B(y,A_1 \delta_U(y))}(z) \le \one_{B(z,2A_2 \delta_U(z))}(y).
  \end{equation}
  Hence
  \begin{align} \label{e:wb16}
  	 \MoveEqLeft{\int_K \int_{ B(x_0,(1+(\eta/2))r) \setminus \overline{B(x_0,r)}} (h(x)-h(y))^2 J(x,y)\,m(dx)\,m(dy)}  \nonumber \\
  	 &\stackrel{\eqref{e:wb15}}{\lesssim} \int_K \int_{U} \int_{B(y,A_1 \delta_U(y))} \frac{(f(x)-f(z))^2}{V(z,2A_2\delta_U(z))} J(x,y) \,m(dz)\,m(dy)\,m(dx) \nonumber\\
  	  &\stackrel{\eqref{e:wb15a}}{\lesssim} \int_K \int_{U} (f(x)-f(z))^2 \int_{B(z,2A_2 \delta_U(z))} \frac{J(x,y)}{V(z,2A_2\delta_U(z))} \,m(dy)  \,m(dz)\,m(dx) \nonumber \\
  	  &\lesssim  \int_K \int_{U} (f(x)-f(z))^2 J(x,z)\,m(dz)\,m(dx) \quad \mbox{(by $A_2 <1/8, \delta_U(z)\le d(x,z)$,  Lemma \ref{l:ujs}).}
  \end{align}
  If $x \in K, y \in U  \setminus B(x_0,(1+(\eta/2))r)$, then for any $i \in \wt{I}(y)$, we have 
  \[
 \delta_U(x_i) \ge \frac{1+\epsilon}{1+7 \epsilon} \delta_U(y) \stackrel{\eqref{e:wtI}}{\ge} \frac{(1+\epsilon)\eta r}{3(1+7\epsilon)}, \quad B(x_i,r_i) \subset B(x_0,(1+(2\eta)/3)r) \setminus  B(x_0,(1+(\eta)/3)r)
  \]
  and hence
    \begin{align}  \label{e:wb17}
  	(h(x)-h(y))^2 &=  	  (f(x)-\sum_{i \in \wt{I}}f_{B_i}\psi_{B_i}(y))^2 \lesssim f(x)^2+ \sum_{i \in \wt{I}(y)} f_{B_i}^2  \nonumber\\
  	&\lesssim f(x)^2 + \frac{1}{m(K)}\int_{ B(x_0,(1+(2\eta)/3)r) \setminus  B(x_0,(1+(\eta)/3)r)} f^2\, dm.
  \end{align}	
  Since for any  $x \in K, y \in U  \setminus B(x_0,(1+(\eta/2))r)$, we have $J(x,y) \asymp \frac{1}{m(K) \phi(r)}$, we have 
   \begin{align} \label{e:wb18}
 	\MoveEqLeft{\int_K \int_{ B(x_0,(1+(\eta/2))r) \setminus \overline{B(x_0,r)}} (h(x)-h(y))^2 J(x,y)\,m(dx)\,m(dy)}  \nonumber \\
 	&\stackrel{\eqref{e:wb17},\eqref{e:vd-iter}}{\lesssim} \int_K \int_{U}  \frac{f(x)^2}{m(K) \phi(r)}\,m(dy)\,m(dx) + \frac{1}{\phi(r)} \int_{ B(x_0,(1+(2\eta)/3)r) \setminus  B(x_0,(1+(\eta)/3)r)} f^2\, dm \nonumber \\
 	&\stackrel{\eqref{e:vd-iter}}{\lesssim} \frac{1}{\phi(r)} \int_{K \cup U} f^2\,dm.
 \end{align}
 Combining \eqref{e:wb16} and \eqref{e:wb18}, we obtain
 \begin{align} \label{e:wb19}
 	\MoveEqLeft{\int_K \int_{ U} (h(x)-h(y))^2 J(x,y)\,m(dx)\,m(dy)} \nonumber\\ &\lesssim   \int_K \int_{U} (f(x)-f(z))^2 J(x,z)\,m(dz)\,m(dx) + \frac{1}{\phi(r)} \int_{K \cup U} f^2\,dm.
 \end{align}
  \noindent \textbf{Case 5:} $x \in K \cup U, y \in F$.
  In this case, we have $h(y)=0$ for all $y \in F$ and $h(x)=0$ for all $x \in U \setminus B(x_0,(1+(2\eta)/3)r)$. Thus it suffices to consider $x \in  B(x_0,(1+(2\eta)/3)r)$ and $y \in F$, which implies $d(x,y) \ge \frac{\eta r}{3}$. 
  Hence we have 
  \begin{align} \label{e:wb20}
  	\MoveEqLeft{\int_{K \cup U} \int_F (h(x)-h(y))^2 J(x,y)  \,m(dy)\,m(dx) }\nonumber\\&= 	\int_{K \cup U} \int_F h(x)^2 J(x,y)\one_{\{d(x,y) \ge \eta r/3\}} \,m(dy)\,m(dx) \nonumber \\
  	&\le \int_{K \cup U} h(x)^2 \int_{B(x,\eta r/3)^c}  J(x,y) \,m(dy)\,m(dx)\nonumber \\
  	&\stackrel{\eqref{e:jump-tail}, \eqref{e:scale}}{\lesssim} \frac{1}{\phi(r)}\int_{K \cup U} h^2\,dm \stackrel{\eqref{e:wb0}}{\lesssim} \frac{1}{\phi(r)}\int_{B(x_0,(1+\eta)r)} f^2\,dm.
  \end{align}
  Combining \eqref{e:wb2},\eqref{e:wb3},\eqref{e:wb14},\eqref{e:wb19}, and \eqref{e:wb20}, we obtain 
  \begin{align*}
  \MoveEqLeft{\int_X \int_X  (h(x)-h(y))^2 J(x,y)  \,m(dy)\,m(dx)} \\ &\lesssim \int_{B(x_0,(1+\eta)r)} \int_{B(x_0,(1+\eta)r)}  (f(x)-f(y))^2 J(x,y)  \,m(dy)\,m(dx) + \frac{1}{\phi(r)}\int_{B(x_0,(1+\eta)r)} f^2\,dm.
  \end{align*}
  By Proposition \ref{p:domain} this implies that $h \in \mathcal{F}$ and that \eqref{e:wb} holds for the case $g=0$.
  
  It remains to show that $h \in C(X)$.  By the bounded overlap property \eqref{e:w-overlap}, \eqref{e:w-pu}, and the definition of $h$, it follows that the restriction of  $h$ on $\overline{B(x_0,r)}$ and on $\overline{B(x_0,r)}$ is continuous. Therefore it suffices to show that $h$ is continuous at each point of $\overline{B(x_0,r)} \setminus B(x_0,r)$.
  This follows from the fact that $\{\psi_{B_j}\}$ is a partition of unity satisfying \eqref{e:w-pu} and \cite[Proposition 3.2-(c)]{Mur24} using the same argument given in \cite[Proof of Lemma 5.9]{Mur24}. Indeed, by that argument, for any $\xi \in \overline{B(x_0,r)} \setminus B(x_0,r)$, there exists $s_0>0, K_0 >1$ such that for all $s \in (0,s_0)$, we have
  \[
  \sup_{y \in B(\xi,s)} \abs{h(y)-h(\xi)} \le   \sup_{y \in B(\xi,K_0 s)}  \abs{f(z)-f(\xi)}.
  \]
  By letting $s \downarrow 0$ and using the continuity of $f$, we obtain that $h$ is continuous.
 
\end{proof}

\subsection{Self-improvement of a simplified cutoff Sobolev inequality} \label{ss:self-improve}

We prove the following   version of cutoff Sobolev inequality using the Whitney blending result. This result can be viewed as a proof of the non-local analogue of simplified cutoff Sobolev inequality introduced in \cite[Definition 6.1]{Mur24} using the Whitney blending method of \cite{Eri}.

\begin{prop} \label{p:css}
Let $(\mathcal{E},\mathcal{F})$ be a regular, pure jump Dirichlet form on $L^2(X,m)$ that satisfies  \hyperlink{jphi}{$\operatorname{J(\phi)}$} and   \hyperlink{cap}{$\operatorname{Cap(\phi)_{\le}}$}, where $\phi:[0,\infty) \to [0,\infty)$ is a scale function. Then there exists $C>0$ such that for any $r,\rho>0, f\in C(X) \cap \mathcal{F}$, the equilibrium potential $\psi$ for $\overline{B(x,r)} \subset B(x,2r)$ for the truncated Dirichlet form $(\mathcal{E}^{(\rho)}, \mathcal{F})$ satisfies
\begin{align} \label{e:css}
	\int_X f^2 \,d\Gamma^{(\rho)}(\psi,\psi) &= \int_{B(x,2r+\rho)} f^2 \,d\Gamma^{(\rho)}(\psi,\psi) \nonumber\\
	& \le C  	\int_{B(x,2r+2\rho)} \int_{B(x,2r+2\rho)} (f(y)-f(z))^2  J(y,z)\,m(dy)\,m(dz) \nonumber \\
	&\quad\quad\quad+ \frac{C}{\phi(r)} \int_{B(x,2r)} f^2\,dm 
\end{align}
\end{prop}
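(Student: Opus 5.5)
The plan is to follow the outline in the introduction. We split $f$ into two pieces using Lemma~\ref{l:wb}, apply Lemma~\ref{l:zerobdy-cs} to the truncated form $(\mathcal{E}^{(\rho)},\mathcal{F})$ for each piece, and then convert truncated energies back to full energies.

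\textbf{Support of $\Gamma^{(\rho)}(\psi,\psi)$.} Since $\psi\equiv 0$ q.e.\ on $B(x,2r)^c$ and $J^{(\rho)}$ vanishes for $d(y,z)>\rho$, the density $\Gamma^{(\rho)}(\psi,\psi)(y)$ vanishes for $y\notin B(x,2r+\rho)$. This gives the first equality in \eqref{e:css}.

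\textbf{Decomposition of $f$.} Apply Lemma~\ref{l:wb} twice, with $\eta=1/4$ and the two balls $B(x,r)$ and $B(x,\tfrac{5}{4}r)$ suitably nested inside $B(x,2r)$, to produce $f_1,f_2\in\mathcal{F}\cap C(X)$ with the following properties:
\begin{itemize}
\item $f_2\equiv f$ on a neighborhood of $\overline{B(x,r)}$ (say on $B(x,\tfrac{5}{4}r)$) and $f_2\equiv 0$ outside $B(x,\tfrac{3}{2}r)$; this is obtained by blending $f$ with $g=0$.
\item $f_1\equiv 0$ on $B(x,\tfrac{9}{8}r)\supset\overline{B(x,r)}$ and $f_1\equiv f$ outside $B(x,\tfrac{3}{2}r)$; this is obtained by blending $0$ with $f$.
\end{itemize}
Since the interface regions coincide, one may arrange $|f|\le |f_1|+|f_2|$ everywhere, using that the blend is a convex-type average. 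If this is awkward, use $f^2\lesssim f_1^2+f_2^2+f_3^2$ with $f_3:=f-f_1-f_2$, which is itself a blended function vanishing near both $\overline{B(x,r)}$ and $B(x,2r)^c$.

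Then $f_2$ vanishes on a neighborhood of $B(x,2r)^c$, which is condition (a) of Lemma~\ref{l:zerobdy-cs}, and $f_1$ vanishes on a neighborhood of $K=\overline{B(x,r)}$, which is condition (b). Lemmas~\ref{l:logcacc} and~\ref{l:zerobdy-cs} only use the pure jump structure, so they apply verbatim to $\mathcal{E}^{(\rho)}$ with $\psi$ the $\mathcal{E}^{(\rho)}$-equilibrium potential. Taking $U=B(x,2r+\rho)$ gives
\[
\int_U f^2\,d\Gamma^{(\rho)}(\psi,\psi)\le 2\sum_{i}\int_U f_i^2\,d\Gamma^{(\rho)}(\psi,\psi)\le 96\sum_i \Gamma^{(\rho)}(f_i,f_i)(U).
\]
Because $J^{(\rho)}$ only couples points within distance $\rho$, $\Gamma^{(\rho)}(f_i,f_i)(U)$ is bounded by the $J$-energy of $f_i$ on $B(x,2r+2\rho)\times B(x,2r+2\rho)$.

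\textbf{Returning to $f$.} Apply \eqref{e:wb} with $A$ chosen so that $Ar\ge 2r+2\rho$. This bounds the $J$-energy of $f_i$ by $C$ times the $J$-energy of $f$ on $B(x,Ar)^2$ plus $C\phi(r)^{-1}\int_{B(x,Ar)}f^2\,dm$. The constant $C$ in Lemma~\ref{l:wb} is uniform in $A$.

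\textbf{Main obstacle: localizing the error term.} The error term $\phi(r)^{-1}\int f^2$ must be localized to $B(x,2r)$, while Lemma~\ref{l:wb} as stated integrates over $B(x,Ar)$. Inspecting its proof, with $g=0$ the error term only involves $\int_{B(x_0,(1+\eta)r)}f^2$ for $f_2$. For $f_1$, which equals $f-(\text{blend of }f\text{ with }0)$, the error is likewise supported in $B(x,\tfrac{3}{2}r)$. So the plan is to use the refined form of \eqref{e:wb} extracted from its proof, namely the last display in the proof, with domain $X\times X$ and error over $B(x_0,(1+\eta)r)$. Outside $B(x,\tfrac{3}{2}r)$ we have $f_1=f$, so its energy is controlled by that of $f$ together with cross terms already handled in the proof of Lemma~\ref{l:wb}.

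The remaining delicate point is the energy domain. The proof of Lemma~\ref{l:wb} gives energy over all of $X\times X$, but pairs $(y,z)$ with one point far from $B(x,2r+2\rho)$ must be excluded. For these we use the fact that $\Gamma^{(\rho)}$ is only evaluated on $U$, so only pairs with $d(y,z)\le\rho$ and $y\in U$ enter, and both points then lie in $B(x,2r+2\rho)$. Thus it suffices to bound the restricted energy of each $f_i$ on $B(x,2r+2\rho)^2$. This holds because the case analysis in the proof of Lemma~\ref{l:wb} is pairwise and preserves the restriction to any ball containing $B(x,\tfrac{3}{2}r)$.
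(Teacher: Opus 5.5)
Your architecture is the paper's. You use the support observation for $\Gamma^{(\rho)}(\psi,\psi)$, split $f$ via Lemma~\ref{l:wb} into pieces vanishing near $\overline{B(x,r)}$ or near $B(x,2r)^c$, and apply Lemma~\ref{l:zerobdy-cs} to the truncated form. You then bound the pieces' energies by Lemma~\ref{l:wb} with the error term localized to $B(x_0,(1+\eta)s)$, as extracted from its proof. The paper uses exactly this localized form in \eqref{e:css2} without comment, so your remark there is apt.

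Your primary decomposition step, however, is wrong. The transition regions of $f_1$ and $f_2$ share $B(x,\tfrac32 r)\setminus\overline{B}(x,\tfrac54 r)$, and there neither piece equals $f$. On that set $f_2=b_2$ and $f_1=f-b_1$. Here $b_1,b_2$ are partition-of-unity combinations of ball averages of $f$ over two different Whitney covers, taken over balls whose size is comparable to $\epsilon$ times the distance to the respective boundaries, not evaluated at $y$.

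Nothing forces $|f(y)|\le|f(y)-b_1(y)|+|b_2(y)|$. It fails whenever $b_1(y)\approx f(y)\neq 0$ and $b_2(y)\approx 0$, and continuity of $f$ does not exclude this. The ``convex-type average'' property gives no pointwise control of $f$. Coinciding interfaces is precisely the wrong arrangement.

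The paper makes the two transition annuli disjoint and abutting:
\begin{itemize}
\item $f_1=f$ on $\overline{B}(x,\tfrac32 r)$ and $f_1=0$ off $B(x,\tfrac74 r)$;
\item $f_2=0$ on $\overline{B(x,\tfrac54 r)}$ and $f_2=f$ off $B(x,\tfrac32 r)$.
\end{itemize}
At every point one of them equals $f$, so $|f|\le\max(|f_1|,|f_2|)$, which yields the constant $48C_1$.

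Your fallback is nonetheless valid. The function $f_3=f-f_1-f_2$ lies in $\mathcal{F}\cap C(X)$ and vanishes on $B(x,\tfrac98 r)$ and off $B(x,\tfrac32 r)$. Its localized energy is bounded by those of $f,f_1,f_2$. So with $f^2\le 3(f_1^2+f_2^2+f_3^2)$ the argument goes through. In fact a single blend already suffices, via $f=f_2+(f-f_2)$.

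Also fix the radii. With $\eta=1/4$, the blend started at $\tfrac54 r$ ends at $\tfrac{25}{16}r>\tfrac32 r$; use $\eta=1/5$ for that blend.
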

\begin{proof}
	If $y \notin B(x,2r+\rho)$, we have $\phi(y)-\phi(z) =0$ for $m$-a.e.~$z \in \ol{B}(y,\rho)$, then $$\Gamma^{(\rho)}(\psi,\psi)(B(x,2r+\rho)^c)=0.$$ This implies the first equality in \eqref{e:w-pu}. 
	
	Now by using Lemma \ref{l:wb}, we find two functions $f_1,f_2 \in \mathcal{F} \cap C(X)$ and $C_1>0$  such that 
	\begin{equation} \label{e:css1}
	\restr{f_1}{\ol{B}(x,3r/2)} \equiv \restr{f}{\ol{B}(x,3r/2)}, \quad \restr{f_1}{{B}(x,7r/4)^c}\equiv 0, \quad  \restr{f_2}{B(x,3r/2)^c} \equiv \restr{f}{B(x,3r/2)^c}, \quad \restr{f_2}{\ol{B(x,5r/4)}}=0,
	\end{equation}
	and  for $i=1,2$, 
	\begin{align} \label{e:css2}
		\MoveEqLeft{\int_{B(x,2r+2\rho)} \int_{B(x,2r+2\rho)} (f_i(y)-f_i(z))^2  J(y,z)\,m(dy)\,m(dz)} \nonumber \\ &\le C_1 \left(  	\int_{B(x,2r+2\rho)} \int_{B(x,2r+2\rho)} (f(y)-f(z))^2  J(y,z)\,m(dy)\,m(dz) + \frac{\int_{B(x,2r)} f^2\,dm}{\phi(r)}\right).
	\end{align}
	Note that \eqref{e:css1} implies
	\begin{equation} \label{e:css3}
		\abs{f(y)} \le \max(\abs{f_1(y)},\abs{f_2(y)}), \quad \mbox{for all $y \in X$.}
	\end{equation}
	By Lemma \ref{l:zerobdy-cs} and  \eqref{e:css3}, we have 
	\begin{align} \label{e:css4}
		\int_{X} f^2\,d\Gamma^{(\rho)}(\psi,\psi)&= 	\int_{B(x,2r+\rho)} f^2\,d\Gamma^{(\rho)}(\psi,\psi)\stackrel{\eqref{e:css3}}{\le} \int_{B(x,2r+\rho)} (\abs{f_1}^2+\abs{f_2}^2)\,d\Gamma^{(\rho)}(\psi,\psi) \nonumber \\
		&\stackrel{\eqref{e:zero-bdy}}{\le}   48 \sum_{i=1}^2 \Gamma^{(\rho)}(f_i,f_i)(B(x,2r+\rho)) \nonumber \\
		&\le 48 \sum_{i=1}^2 \int_{B(x,2r+ \rho)}\int_{B(x,2r+2\rho)}  (f_i(y)-f_i(z))^2  J(y,z)\,m(dy)\,m(dz).
	\end{align}
	Combining \eqref{e:css2} and \eqref{e:css4}, we obtain \eqref{e:css} with $C=48C_1$.
\end{proof}

In view of Proposition~\ref{p:css}, the argument below, completing the proof of Theorem~\ref{t:main},
may be regarded as a non-local analogue of \cite[Lemma~6.2]{Mur24}.
As shown below, the truncated Dirichlet form plays a crucial role in adapting the argument of \cite{Mur24}
to the non-local setting.

\begin{proof}[Proof of Theorem \ref{t:main}]
	Let $x \in X, 0<r<R$ be arbitrary. 	If $B(x,R+r)=X$ we simply choose $\psi\equiv 1$ as the cutoff function. In this case,   we have $\Gamma(\psi,\psi) \equiv 0$.
	
	It remains to consider the case $B(x,R+r) \neq X$. In this case $2r < R+r \le \diam(X,d)$ implies $r < \diam(X,d)/2$. 
	Let $A_1, A_2,C_1>1$ denote the constants such that \hyperlink{cap}{$\operatorname{Cap}(\phi)_{\le}$} holds. 
	Define 
	\begin{equation} \label{e:defL}
	  \rho:= r/44.
	\end{equation}
	Let $N=\{z_i: i \in I\}$ denote a maximal $\rho$-separated subset of $X$. For each $i \in I$, let $B_i= \ol{B}(z_i, \rho), B_i^*=B(z_i, 2\rho), B_i^\#=B(z_i, 4\rho)$. 
	By Proposition \ref{p:css},  there exists $C_1>0$ such that for any $i \in I$ and for alls $f \in \mathcal{F} \cap C(X)$, the equilibrium potential $\psi_i$ for $B_i \subset B_i^*$  corresponding to the truncated Dirichlet form  $(\mathcal{E}^{(\rho)}, \mathcal{F})$  satisfies
	\begin{equation}\label{e:si2}
		\int_{X} {f}^2 \, d\Gamma(\psi_i,\psi_i) 
		\le C_1 \int_{B_i^\# \times B_i^\#} (f(y)-f(z))^2\,J(dy,dz) + \frac{C_1}{\phi(\rho)} \int_{B_i^*} f^2\, dm.
	\end{equation}
	
	Define $\wt{I} \subset I$ as 
	\[
	\wt{I} :=\{i \in I: B(z_i,\rho) \cap B(x,R+r/2) \neq \emptyset \},
	\]
	so that 
	\begin{equation} \label{e:csj1}
	 \bigcup_{i \in \wt{I}} B_i \supset B(x,R+r/2), \quad \bigcup_{i \in \wt{I}} B_i^* \subset   B(x,R+(r/2)+3\rho), \quad \bigcup_{i \in \wt{I}} B_i^\# \subset   B(x,R+(r/2)+5\rho). 
	\end{equation}
 Therefore the function
	\[
	 	{\psi}:= \max_{i \in \wt{I}} \psi_i
	\]
	satisfies (by \cite[Theorem 1.4.2]{FOT}(i),(ii) and triangle inequality)
	\[
	{\psi} \in \mathcal{F}, \quad  0 \le  {\psi} \le 1, \q  {\psi} \equiv 1 \mbox{ on $B(x,R+r/2)$},\, \psi \equiv 0  \mbox{ on $B(x,R+r/2+3\rho)^c$,}
	\]
and hence
	\begin{equation} \label{e:csj2}
\Gamma^{(\rho)} (\psi,\psi)(X)= \Gamma^{(\rho)}(B(x,R+r/2+4\rho) \setminus B(x,R+r/2-\rho)).
	\end{equation}

Let $\wh{I}:= \{ i \in \wt{I} : B_i^* \cap \left(B(x,R+r/2+5\rho) \setminus B(x,R+r/2-2\rho)\right) \neq \emptyset\}$, so that 
\[
\psi=\wh{\psi}, \mbox{ on $B(x,R+r/2+5\rho) \setminus B(x,R+r/2-2\rho)$,}
\]
so that  the energy measure $\Gamma^{(\rho)} (\psi,\psi)$ and $\Gamma^{(\rho)} (\wh{\psi},\wh{\psi})$ coincide on $B(x,R+r/2+4\rho) \setminus B(x,R+r/2-\rho)$; that is,
\begin{equation}
	\label{e:csj3}
 \Gamma^{(\rho)} (\psi,\psi)(A)=  \Gamma^{(\rho)} (\wh{\psi},\wh{\psi})(A), \quad \mbox{for all $A \subset B(x,R+r/2+4\rho) \setminus B(x,R+r/2-\rho)$}.
\end{equation}
Furthermore, by the triangle inequality and the choice of $\rho$ in \eqref{e:defL}, we have 
\begin{equation} \label{e:csj4}
	\bigcup_{i \in \wh{I}} B_i^\#   \subset  (B(x,R+r/2+11\rho) \setminus B(x,R+r/2-8\rho) \subset B(x,R+ (3r)/4) \setminus B(x,R+(r/4)).
\end{equation}

	For any $n \in \mathbb{N}$ and for any $a_1,\ldots,a_n, b_1,\ldots,b_n \in \mathbb{R}$, we have 
	\[
	\left(\max_{1 \le i \le n} a_i -\max_{1 \le i \le n} b_i \right)^2 \le \max_{1 \le i \le n} (a_i-b_i)^2 \le \sum_{i=1}^n (a_i-b_i)^2. 
	\]
	Hence by integrating the above pointwise bound, we have 
	\begin{equation} \label{e:nlsi1}
		\Gamma^{(\rho)}(\wh{\psi},\wh{\psi}) \le \sum_{i \in \wh{I}} \Gamma^{(\rho)}(\psi_i,\psi_i). 
	\end{equation}
	
	By  \eqref{e:trunc-est-em}, \eqref{e:nlsi1},   and \eqref{e:scale}, there exists $C_2>0$ such that for any  $A \subset X$, we have
	\begin{align} \label{e:nlsi3}
		\Gamma(\psi,\psi)(A) &\le 	\Gamma^{(\rho)}(\psi,\psi)(A) + \frac{C_2}{\phi(r)}m(A) \nonumber \\
		&\stackrel{\eqref{e:csj2}}{=} \Gamma^{(\rho)}(\psi,\psi)(A \cap (B(x,R+r/2+4\rho) \setminus B(x,R+r/2-\rho))) + \frac{C_2}{\phi(r)}m(A) \nonumber \\
		&\stackrel{\eqref{e:csj3}}{=}\Gamma^{(\rho)}(\wh{\psi},\wh{\psi})(A \cap (B(x,R+r/2+4\rho) \setminus B(x,R+r/2-\rho))) + \frac{C_2}{\phi(r)}m(A) \nonumber \\
		&\stackrel{\eqref{e:nlsi1}}{\le} \sum_{i \in \wh{I}} \Gamma^{(\rho)}(\psi_i,\psi_i)(A \cap (B(x,R+r/2+4\rho) \setminus B(x,R+r/2-\rho))) + \frac{C_2}{\phi(r)}m(A) \nonumber \\
		&\le  \sum_{i \in \wh{I}} \Gamma^{(\rho)}(\psi_i,\psi_i)(A ) + \frac{C_2}{\phi(r)}m(A).
	\end{align}

	Hence, for any  function $f \in \mathcal{F} \cap C(X)$,
	\begin{align} \label{e:csj5}
		\MoveEqLeft{\int_{B(x,R+r)} f^2 \,d\Gamma(\psi,\psi)} \nonumber \\
		 &\le \frac{C_2}{\phi(r)}\int_{B(x,R+r)} f^2 \,dm +  \sum_{i \in \wh{I}} \int_X f^2 \,d\Gamma^{(\rho)}(\psi_i,\psi_i) \quad \mbox{(by \eqref{e:nlsi3})}\nonumber \\ 
		&\le 	 \frac{C_2}{\phi(r)}\int_{B(x,R+r)} f^2 \,dm +  \sum_{i \in \wh{I}} \left( C_1 \int_{B_i^\# \times B_i^\#} (f(y)-f(z))^2\,J(dy,dz) + \frac{C_1}{\phi(\rho)} \int_{B_i^*}f^2\, dm\right). 
	\end{align}
 By \eqref{e:overlap}, and \eqref{e:csj4}, we have 
 \begin{equation} \label{e:csj6}
 	\sum_{i \in \wh{I}} \one_{B_i^*}(y) \lesssim \one_{B(x,R+r)}(y), 
 \end{equation}
 and
  \begin{equation} \label{e:csj7}
 \sum_{i \in \wh{I}} \one_{B_i^{\#}}(y)\one_{B_i^{\#}}(z) \lesssim \one_{V}(y) \one_{V}(z), \quad \mbox{where $V:= B(x,R+ (3r)/4) \setminus B(x,R+(r/4))$.}
 \end{equation}
 Combining  \eqref{e:csj5}, \eqref{e:csj6},  \eqref{e:csj7}, and \eqref{e:scale},    for all $f \in \mathcal{F} \cap C(X)$,   we have
 \[
 \int_{B(x,R+r)} f^2 \,d\Gamma(\psi,\psi) \lesssim \frac{1}{\phi(r)} \int_{B(x,R+r)} f^2\,dm + \int_{V} \int_V (f(y)-f(z))^2 J(y,z)\,m(dy)\,m(dz).
 \]
By regularity of the Dirichlet form, the above estimate also holds for all $f \in \mathcal{F}$ and hence \hyperlink{csj}{$\operatorname{CSJ}(\phi)$} holds. 
\end{proof}
\subsection{Characterization of stable-like heat kernel estimate}
Next, we complete the proof of Theorem~\ref{t:char-hke} using Theorem~\ref{t:main}. Indeed, if $\diam(X,d)=\infty$ and $(X,d,m)$ satisfies the stronger  \hyperlink{rvd}{$\on{RVD}$} the equivalence is an immediate consequence of Theorem \ref{t:main}, \eqref{e:csj-cap} and \cite[Theorem~1.13]{CKW-hke}. In the general case, we need some additional ingredients from \cite{Mal, GHH-mv, GHH-lb, KM}.

\begin{proof}[Proof of Theorem~\ref{t:char-hke}]
The implication \emph{(b) $\Rightarrow$ (a)} is well known and follows from \cite[Theorem~1.20]{Mal} together with \eqref{e:csj-cap} in the case $\diam(X,d)=\infty$.
When $\diam(X,d)<\infty$, the same reasoning applies: the argument used to show \emph{(1) $\Rightarrow$ (4)} in \cite[Theorem~1.13]{CKW-hke}, as well as the construction and properties of the auxiliary space in \cite[\textsection~3]{Mal}, extend to the finite diameter case with only minor modifications.

 	Next, we show (a) implies (b). By Theorem \ref{t:main}, the Dirichlet form $(\mathcal{E},\mathcal{F})$ satisfies \hyperlink{csj}{$\operatorname{CSJ}(\phi)$} and \hyperlink{jphi}{$\operatorname{J}(\phi)$}.  When $\diam(X,d)=\infty$, the stable-like heat kernel estimate \hyperlink{shk}{$\on{HK}(\phi)$} follows from obtaining the a similar heat kernel estimate for the auxiliary space in \cite[\textsection 3]{Mal} using \cite[eq.~(4.13), Proposition 4.1]{Mal} and \cite[Theorem~1.13]{CKW-hke}. The  stable-like heat kernel estimate \hyperlink{shk}{$\on{HK}(\phi)$} for the  auxiliary space then implies the desired heat kernel estimate for $(\mathcal{E},\mathcal{F})$ using \cite[eq.~(4.6) and (4.7), Proposition 4.1]{Mal}.
 	
 	It remains to show (a) implies (b) when $\diam(X,d)<\infty$.
 	By adapting the arguments in \cite{Mal} for the finite diameter case, and using the argument in the previous paragraph, it suffices to further assume in addition that $(X,d,m)$ satisfies the reverse volume doubling property \hyperlink{rvd}{$\on{RVD}$}.The proof of the implication (a) implies (b) in \cite[Theorem 1.13]{CKW-hke} relies crucially on the infinite diameter assumption to obtain on-diagonal upper bounds as explained in \cite[Remark 2.41]{KM}.
 	In this case, the argument in \cite[\textsection 4.3]{CKW-hke} implies   two-sided bounds on exit time which in turn is sufficient to obtain \hyperlink{shk}{$\on{HK}(\phi)$} by using the equivalence in \cite[Theorem 2.40]{KM}. The proof in \cite[Theorem 2.40]{KM} relies on results obtained in \cite{GHH-mv,GHH-lb}. An alternate approach to \cite[Theorem 2.40]{KM}. We refer to  \cite[Remark 8.3]{CC24b} for a different approach to  \cite[Theorem 2.40]{KM}.
 \end{proof}
\noindent \textbf{Acknowledgments.}
I am grateful to Sylvester Eriksson-Bique for many illuminating discussions on the Whitney blending technique
and for sharing an early version of his manuscript \cite{Eri}.

\noindent Department of Mathematics, University of British Columbia,
Vancouver, BC V6T 1Z2, Canada. \\
mathav@math.ubc.ca 


\begin{thebibliography}{4}
		
		
		
		\bibitem[AB]{AB} \textsc{Andres,  S., Barlow, M.~T.} 
			\newblock Energy inequalities for cutoff functions and some applications. 
			{\em J. fur reine angewandte Math.} {\bf 699} (2015), 183--216.
			
		\bibitem[Ant]{Ant} \textsc{Antilla, R.} 
		\newblock An approach to sub-Gaussian heat kernel estimates via analysis on metric spaces. \arxiv{2509.04155}.
		
		\bibitem[BCLS]{BCLS} \textsc{Bakry, D., Coulhon, T.,  Ledoux, M.,  Saloff-Coste, L. }
		\newblock Sobolev inequalities in disguise. 
		{\em Indiana Univ. Math. J.} {\bf 44} (1995), no. 4, 1033--1074.
		
		
				\bibitem[BGK]{BGK} \textsc{Barlow, M.~T., Grigor'yan, A.,  Kumagai, T.}
		\newblock	Heat kernel upper bounds for jump processes and the first exit time.
		{\em J. Reine Angew. Math.} {\bf 626} (2009), 135--157.
		
		
			\bibitem[BB]{BB}
		\textsc{Barlow,  M.~T., Bass, R.~F.} 
		\newblock Stability of parabolic Harnack inequalities. 
		{\em Trans. Amer. Math. Soc.} {\bf 356} (2004) no. 4, 1501--1533.
		
		
		
		\bibitem[BBK]{BBK}
		\textsc{Barlow,  M.~T., Bass, R.~F., Kumagai, T.} 
		Stability of parabolic Harnack inequalities on metric measure spaces, 
		{\em J. Math. Soc. Japan} (2) {\bf 58} (2006), 485--519.  (correction in \arxiv{2001.06714})
		
		
		\bibitem[BCM]{BCM} \textsc{Barlow, M.~T., Chen, Z.-Q., Murugan, M.} Stability of EHI and regularity of MMD spaces. \arxiv{2008.05152}
			
		
		\bibitem[BM]{BM} \textsc{Barlow, M.~T.,  Murugan, M.}
		\newblock Stability of the elliptic Harnack inequality. 
		{\em Ann. of Math.} (2) {\bf 187} (2018), no. 3, 777--823.
		

		\bibitem[BH]{BH91}
		{\sc Bouleau, N. and Hirsch, F.}
		\newblock {\em Dirichlet forms and analysis on Wiener space.}
		\newblock De Gruyter Stud. Math., {\bf 14}, Walter de Gruyter \& Co., Berlin, 1991. x+325 pp.
		
		
			\bibitem[CC]{CC24b} {\sc Cao, S., Chen, Z.-Q.}
			\newblock Boundary trace theorems for symmetric reflected diffusions. Probab. Theory Relat. Fields (2025). \url{https://doi.org/10.1007/s00440-025-01458-6}
		
		
		
			\bibitem[CKW20a]{CKW-phi}
		\textsc{Chen, Z.-Q., Kumagai, T., Wang, J.}  
		\newblock Stability of parabolic Harnack inequalities for symmetric non-local Dirichlet forms. 
		{\em  J. Eur. Math. Soc. (JEMS)} {\bf 22} (2020), no. 11, 3747--3803.
		
		\bibitem[CKW21]{CKW-hke} 	\textsc{Chen, Z.-Q., Kumagai, T., Wang, J.}   
		\newblock Stability of heat kernel estimates for symmetric non-local Dirichlet forms. 
		{\em Mem. Amer. Math. Soc.} {\bf 271} (2021), no. 1330, v+89 pp. 
		
		\bibitem[CKW20b]{CKW-diff-jump} 		\textsc{Chen, Z.-Q., Kumagai, T., Wang, J.}  
		\newblock Heat kernel estimates and parabolic Harnack inequalities for symmetric Dirichlet forms. 
		{\em Adv. Math.} {\bf 374} (2020), 107269, 71 pp.
		
		
		\bibitem[Del]{Del} \textsc{Delmotte, T.}
		\newblock Parabolic Harnack inequality and estimates of Markov chains on graphs {\em Rev. Mat. Iberoamericana} {\bf 15} (1999), no. 1, 181--232.
		
		
			\bibitem[Eri]{Eri} {\sc Eriksson-Bique, S.} On the Resistance Conjecture, \arxiv{2602.05477} (2026)
		
		\bibitem[EM]{EM}  {\sc Eriksson-Bique, S., Murugan, M.}  
		\newblock
		On the energy image density conjecture of Bouleau and Hirsch. 	\arxiv{2510.13659} (2025).
		
			\bibitem[FOT]{FOT}
		{\sc Fukushima, M., Oshima, Y. and Takeda, M.}
		\newblock {\em Dirichlet forms and symmetric Markov processes.}
		\newblock 2nd rev.\ and ext.\ ed., de Gruyter Stud. Math., vol.~19, Walter de Gruyter \& Co., Berlin, 2011. 
	
		
	\bibitem[Gri]{Gri}   \textsc{Grigor'yan, A.} 
	\newblock The heat equation on noncompact Riemannian 
	manifolds. 
	(in Russian) {\it Matem. Sbornik. \bf 182} (1991), 55--87.
	(English transl.) {\it Math. USSR Sbornik \bf 72} (1992), 47--77.
	
	
	\bibitem[GHH18]{GHH18}	\textsc{Grigor'yan, A., Hu, E.,  Hu, J.}
	\newblock Two-sided estimates of heat kernels of jump type Dirichlet forms. 
	{\em Adv. Math.} {\bf 330} (2018), 433--515.
	
		\bibitem[GHH23]{GHH-mv} \textsc{Grigor'yan, A., Hu, E.,  Hu, J.} Parabolic mean value inequality and on-diagonal upper bound of the heat kernel on doubling spaces. {\em Math. Ann.} (2023).  
	
	\bibitem[GHH24]{GHH-lb} \textsc{Grigor'yan, A., Hu, E.,  Hu, J.} Mean value inequality and generalized capacity on doubling spaces,  
	{\em Pure and Applied Funct. Anal.}   {\bf 9} (2024), no. 1, 111--168.
	
	
	
	\bibitem[GHL14]{GHL14} 	\textsc{Grigor'yan, A., Hu, J., Lau, K.-S.} 
	\newblock Heat kernels on metric measure spaces. 
	Geometry and analysis of fractals, 147--207, 
	{\em Springer Proc. Math. Stat.}, {\bf 88}, Springer, Heidelberg, 2014.
	
	

 
	\bibitem[GHL15]{GHL}
	\textsc{Grigor'yan, A., Hu, J.} and \textsc{Lau, K.-S.} 
	Generalized capacity, Harnack inequality and heat kernels of Dirichlet forms on metric spaces.
	{\em J. Math. Soc. Japan} {\bf 67} (2015) 1485--1549.
	
	
	
	
		\bibitem[GS]{GS} \textsc{Gyrya, P., Saloff-Coste, L.} 
	Neumann and Dirichlet heat kernels in inner uniform domains. 
	{\em Ast\'erisque} No. {\bf 336 }(2011), viii+144 pp.
	
		\bibitem[Hei]{Hei}
	{\sc Heinonen, J.}
	\newblock {\em Lectures on Analysis on Metric Spaces}, Universitext. Springer-Verlag, New York, 2001. x+140 pp. 
	
		\bibitem[KM20]{KM-aop} 
	{\sc  Kajino, N. and Murugan, M. }
	On singularity of energy measures for symmetric diffusions with full off-diagonal heat kernel estimates. 
	{\em Ann. Probab.} {\bf 48} (2020), no. 6, 2920–2951.
	
			\bibitem[KM25+]{KM} 
	{\sc  Kajino, N. and Murugan, M. }
	\newblock Heat kernel estimates for boundary traces of reflected diffusions on uniform domains. {\em arXiv preprint arXiv:2312.08546v3
	}. 2025
	
		\bibitem[KS]{KS} 
	{\sc  Kajino, N. and Shimizu, R. }
	\newblock Contraction properties and differentiability of $p$-energy forms with applications to nonlinear potential theory on self-similar sets. {\em arXiv preprint arXiv:2404.13668}. 2025
	
	
	
	\bibitem[Mal]{Mal} \textsc{Malmquist, J.}
	\newblock Stability results for symmetric jump processes on metric measure spaces with atoms.
	{\em Potential Anal.} {\bf 59} (2023), no. 1, 167--235.
	 
	 \bibitem[Mos]{Mos} \textsc{Mosco, U.} 
	 \newblock Composite media and asymptotic Dirichlet forms. 
	\emph{J.\ Funct.\ Anal.} {\bf 123} (1994), no. 2, 368--421.
	 
	 \bibitem[Mur20]{Mur20}  \textsc{Murugan, M.}
	 \newblock	On the length of chains in a metric space,
	 \emph{J.\ Funct.\ Anal.}  \textbf{279} (2020), no. 6, 108627, 18 pp.
	 
	  \bibitem[Mur23]{Mur23}  \textsc{Murugan, M.}
	   A note on heat kernel estimates, resistance bounds and Poincar\'e inequality. {\em Asian J. Math.}
	 Vol. {\bf 27}, No. 6, pp. 853–866, 2023.
	 
	\bibitem[Mur24]{Mur24}  \textsc{Murugan, M.}
	\newblock Heat kernel for reflected diffusion and extension property on uniform domains. 
	{\em Probab. Theory Related Fields} {\bf 190} (2024), no. 1-2, 543–599.
	
		\bibitem[Mur26]{Mur26}  \textsc{Murugan, M.}
	\newblock Diffusions and random walks with prescribed sub-Gaussian heat kernel estimates.
	{\em Ann. Probab.} (to appear).
	
		\bibitem[MS19]{MS} \textsc{Murugan, M., Saloff-Coste, L.}
	\newblock Heat kernel estimates for anomalous heavy-tailed random walks. 
	{\em Ann. Inst. Henri Poincar\'e Probab. Stat.} {\bf 55} (2019), no. 2, 697--719.
	
		\bibitem[MS23]{MS23}  \textsc{Murugan, M., Saloff-Coste, L.} Harnack inequalities and Gaussian estimates for random walks on metric measure spaces,  
		{\em Electron. J. Probab.} {\bf 28} (2023), Paper No. 64, 81 pp.
	
	\bibitem[Sal]{Sal} \textsc{Saloff-Coste, L.} 
	\newblock A note on Poincaré, Sobolev, and Harnack inequalities.
	{\em Internat. Math. Res. Notices} 1992, no. {\bf 2}, 27--38.
	
	\bibitem[Stu]{Stu}  \textsc{Sturm, K.-T.}
	\newblock nalysis on local Dirichlet spaces --- III. The parabolic Harnack inequality,
	\emph{J.\ Math.\ Pures Appl.}\ (9) \textbf{75} (1996), no.\ 3, 273--297.
	
	\bibitem[Yan]{Yan} \textsc{Yang, M.} 
	\newblock  
	Energy inequalities for cutoff functions of -energies on metric measures spaces, \arxiv{2507.08577}
	
 
\end{thebibliography}
\end{document}